\title{The calculus of thermodynamical formalism}
 \author[$\S$]{Paolo Giulietti\thanks{Supported by Brazilian-French Network in Mathematics}}
 \author[$\ddag$]{Benoit R. Kloeckner\thanks{Supported by the Agence Nationale de la Recherche, grant ANR-11-JS01-0011.}}
 \author[$\S$]{Artur O. Lopes\thanks{Supported by CNPq-Brazil and INCTMat.}}
 \author[$\S$]{Diego Marcon\thanks{Partially supported by CNPq-Brazil through the postdoctoral scholarship PDJ/501839/2013-5.}}
 \affil[$\S$]{Universidade Federal do Rio Grande do Sul, IME, 91509-900, Porto Alegre, Brazil}
 \affil[$\ddag$]{Universit\'e Paris-Est, Laboratoire d'Analyse et de Mat\'ematiques Appliqu\'ees (UMR 8050), UPEM, UPEC, CNRS, F-94010, Cr\'eteil, France}
\theoremstyle{plain}
\newtheorem{theo}{Theorem}[section]
\newtheorem{coro}[theo]{Corollary}
\newtheorem{prop}[theo]{Proposition}
\newtheorem{lemm}[theo]{Lemma}
\newtheorem{theomain}{Theorem}
\newtheorem{coromain}[theomain]{Corollary}
\newtheorem{exemain}[theomain]{Example}
\theoremstyle{definition}
\newtheorem{defi}[theo]{Definition}
\newtheorem{rema}[theo]{Remark}
\newtheorem{exem}[theo]{Example}
\newtheorem{ques}[theo]{Question}
\newcommand{\fspace}[1]{\mathcal{#1}}
\newcommand{\spacem}[1]{\mathcal{#1}}
\newcommand{\op}[1]{\mathscr{#1}}
\newcommand{\dd}{\mathrm{d}}
\newcommand{\supp}{\operatorname{supp}}
\newcommand{\Var}{\operatorname{Var}}
\newcommand{\entropy}{\operatorname{h_{\fspace{X}}}}
\newcommand{\pressure}{\operatorname{Pr}}
\newcommand{\Id}{\operatorname{Id}}
\newcommand{\interior}{\operatorname{int}}
\newcommand{\rv}{\operatorname{rv}}
\newcommand{\Rot}{\operatorname{Rot}}
\newcommand{\Holder}{\operatorname{Hol}}
\newcommand{\fr}{\partial}
\newlength{\hypobox}
\newcounter{hypo}
\renewcommand{\thehypo}{(H\arabic{hypo})}
\newenvironment{hypo}{\refstepcounter{hypo}\medskip\par\noindent%
  \begin{minipage}[t]{1.5em}\textbf{\thehypo}\end{minipage}%
  \hfill\begin{minipage}[t]{\hypobox}}{\end{minipage}\medskip}
\begin{document}
    \makeatletter
    \def\blfootnote{\gdef\@thefnmark{}\@footnotetext}
    \let\@fnsymbol\@roman
    \makeatother

\maketitle

\begin{abstract}
Given a finite-to-one map $T$ acting on a compact metric space $\Omega$
and an appropriate Banach space of functions $\fspace{X}(\Omega)$, one
classically constructs for each potential $A \in \fspace{X}$
a transfer operator $\op{L}_A$ acting on $\fspace{X}(\Omega)$. Under suitable
hypotheses, it is well-known that
$\op{L}_A$ has  a maximal eigenvalue $\lambda_A$, has a spectral gap and defines a unique Gibbs measure $\mu_A$. Moreover there is a unique
normalized potential of the form $B:=A+f-f\circ T+c$ acting as a
representative
of the class of all potentials defining the same Gibbs measure.

The goal of the present article is to study the geometry of the set
of normalized potentials $\fspace{N}$, of the normalization map
$A\mapsto B$, and of the Gibbs map $A\mapsto \mu_A$. We give
an easy proof of the fact that $\fspace{N}$
is an analytic submanifold of $\fspace{X}$ and that the normalization
map is analytic; we compute the derivative
of the Gibbs map; last we endow $\fspace{N}$ with a natural weak
Riemannian metric (derived from the asymptotic variance) with respect to which we compute the gradient
flow induced by the pressure with respect to a given potential,
e.g. the metric entropy functional.
We also apply these ideas to recover in a wide setting existence and
uniqueness of equilibrium states, possibly under constraints.
\end{abstract}

\blfootnote{\textup{2010} \textit{Mathematics Subject Classification}: 37D35, 37A35, 37C30, 49Q20}
\blfootnote{\textit{Keywords}: Transfer operators, Equilibrium states, Entropy, Regularity, Wasserstein space}

\section{Introduction}

The goal of this article is to propose a differential-geometric
approach to the thermodynamical formalism for maps whose
transfer operators satisfy the conclusion of the Ruelle-Perron-Frobenius
theorem (for example, expanding maps).

Some of our results stated below are already known for certain dynamical
systems (see later for more precise historical references); let us stress
what we believe are our main contributions:
\begin{itemize}
\item we propose a point of view, based on differential geometry in
  the space of potentials, which provides new and efficient\footnote{See for
  example Corollary \ref{coro:eigendata}.} proofs
   of strong results
  (e.g. Fr\'echet dérivatives are computed instead of mere directional
  derivatives) valid in a fairly general framework,
\item we compute an explicit formula for the derivative of
  $\int \varphi\,\dd\mu_A$ with respect to $A$ (Theorem \ref{theomain:DG}),
  leading naturally to the variance metric linking together several
  natural quantities (Theorem \ref{theomain:metric}),
\item we use this metric to define the gradient of natural
  functionals, which leads to a gradient flow  modeling
  a system out of equilibrium (Section \ref{sec:gradient-flow}),
\item we show that the map sending a potential to its Gibbs measure
  is very far from being smooth in the sense of optimal transportation
  (Theorem \ref{theomain:roughness}),
\item we improve a result of Kucherenko and Wolf, identifying
  precisely the equilibrium state of a potential under a finite
  set of linear constraints (Theorem \ref{theomain:constraints},
  see also Example \ref{exemain:constraints}).
\end{itemize}

\subsection{Transfer operator, Gibbs measures and normalization}

Let $\Omega$ be a compact metric space and $T:\Omega\to\Omega$
be a finite-to-one map, defining a discrete-time dynamical system.
The model cases
are uniformly expanding maps such as $x\mapsto dx\mod 1$ on
the circle or the shift over right-infinite words on a finite alphabet,
but we shall consider a very general setting by mostly
asking\footnote{see Section \ref{sec:notations} for the precise
hypotheses} that for each potential $A$ in a suitable function
space $\fspace{X}(\Omega)$, the Ruelle-Perron-Frobenius theorem holds for
the transfer operator
\begin{align*}
\op{L}_A &: \fspace{X}(\Omega) \to \fspace{X}(\Omega) \\
    f &: x \mapsto \sum_{T(y) = x} e^{A(y)} f(y)
\end{align*}
i.e. $\op{L}_A$ has a positive, simple
leading eigenvalue
$\lambda_A$ associated with a positive eigenfunction $h_A$; its
dual operator acting on measures $\op{L}_A^\ast$ has a unique
eigenprobability $\nu_A$; and $\op{L}_A$ has a spectral gap below
$\lambda_A$. Then, the measure $\mu_A = h_A\nu_A$ (where the
multiplicative constant in $h_A$ is chosen so as to make $\mu_A$ a
probability measure) is an invariant measure for $T$, which we will call
here the \emph{Gibbs measure} associated to the potential $A$.

Two different potentials $A,B$ which differ by a constant
and a coboundary define the same Gibbs measure, which can thus
be parametrized by the quotient space $\fspace{Q}=\fspace{X}(\Omega)/\fspace{C}$
where
\[ \fspace{C} = \{ c + g - g\circ T \,|\, c\in \mathbb{R},
  g\in\fspace{X}(\Omega) \}. \]
The subset $\fspace{N}\subset \fspace{X}(\Omega)$ of normalized potential
(i.e. such that $\lambda_A=1$ and $h_A = \mathbf{1}$) contains
exactly one representative of each class modulo
$\fspace{C}$, making $\fspace{N}$ another natural parameter space
for Gibbs measures.

Our main object of study is the first-order variation of $\mu_A$ with
respect to $A$, which means we consider
$\mu_{A+\zeta}$ for small $\zeta\in\fspace{X}(\Omega)$;
of course, adding to $\zeta$ a constant and a coboundary will
have no effect. In the literature, it is often the case
that one asks $\zeta$ to satisfy the normalizing condition
$\int \zeta \,\dd\mu_A=0$ to get rid of the constant, and
then considers $\zeta$ up to coboundaries. We argue that instead,
it makes things simpler and clearer to go fully with one of these points
of view: either consider both $A$ and $\zeta$ modulo
$\fspace{C}$, or restrict to normalized $A$ and constrain $\zeta$ to be
tangent to $\fspace{N}$. Our fist result gives a solid ground to this
principle (Theorem \ref{theo:normalized}):
\begin{theomain}\label{theomain:N}
The set $\fspace{N}$ of normalized potentials
is an analytic submanifold of $\fspace{X}(\Omega)$
and its tangent space at $A$ is $\ker \op{L}_A$,
which is a topological complement to $\fspace{C}$.
\end{theomain}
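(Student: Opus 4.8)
\emph{Proof idea.} The plan is to realize $\fspace{N}$ as the image of an explicit analytic retraction of $\fspace{X}(\Omega)$ --- the normalization map --- and then to read off the submanifold structure and the tangent space from the derivative of this retraction at a point of $\fspace{N}$. First I would record the analyticity of the spectral data: since $\fspace{X}(\Omega)$ is a Banach algebra, $A\mapsto\op{L}_A$ is entire into $\mathcal{L}(\fspace{X})$, and the spectral gap hypothesis makes $\lambda_A$ an isolated simple eigenvalue, so analytic perturbation theory à la Kato gives that $A\mapsto\lambda_A$ and $A\mapsto h_A$ are analytic; as $\lambda_A>0$ and $h_A$ lies in the open cone of functions bounded below by a positive constant, $A\mapsto\log\lambda_A$ and $A\mapsto\log h_A$ are analytic as well, so
\[ \pi\colon \fspace{X}\to\fspace{X},\qquad \pi(A)=A+\log h_A-(\log h_A)\circ T-\log\lambda_A \]
is analytic. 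A direct computation with the defining sum for $\op{L}_A$ shows $\op{L}_{\pi(A)}\mathbf{1}=\mathbf{1}$ and $\lambda_{\pi(A)}=1$, so $\pi(\fspace{X})\subseteq\fspace{N}$; and $A\in\fspace{N}$ forces $\log h_A=0=\log\lambda_A$, so $\pi|_{\fspace{N}}=\Id$. Hence $\pi\circ\pi=\pi$ and $\fspace{N}=\operatorname{im}\pi=\operatorname{Fix}\pi$. Moreover the conjugation identities $\op{L}_{A+c}=e^{c}\op{L}_A$ and $\op{L}_{A+g-g\circ T}=(\text{mult. by }e^{-g})\circ\op{L}_A\circ(\text{mult. by }e^{g})$ yield $h_{A+c}=h_A$, $\lambda_{A+c}=e^{c}\lambda_A$, $h_{A+g-g\circ T}=e^{-g}h_A$, $\lambda_{A+g-g\circ T}=\lambda_A$, whence $\pi(A+\zeta)=\pi(A)$ for every $\zeta\in\fspace{C}$: the retraction $\pi$ is constant on the cosets of $\fspace{C}$.

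Next I would compute $P:=D\pi_A$ at a point $A\in\fspace{N}$ (so $\lambda_A=1$, $h_A=\mathbf{1}$, $\nu_A=\mu_A$). Differentiating the eigen-relations gives $D(\log\lambda_A)(\zeta)=\int\zeta\,\dd\mu_A$ and that $u:=D(\log h_A)(\zeta)$ solves $(\op{L}_A-\Id)u=\bigl(\int\zeta\,\dd\mu_A\bigr)\mathbf{1}-\op{L}_A\zeta$ (with $\int u\,\dd\mu_A=0$ fixing it), so that
\[ P\zeta=\zeta+u-u\circ T-\int\zeta\,\dd\mu_A. \]
Using the identity $\op{L}_A(v\circ T)=v$ valid for normalized $A$, one gets $\op{L}_A(P\zeta)=0$, i.e. $\operatorname{im}P\subseteq\ker\op{L}_A$; conversely $\zeta\in\ker\op{L}_A$ forces $\int\zeta\,\dd\mu_A=0$ hence $u=0$ and $P\zeta=\zeta$. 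Thus $\operatorname{im}P=\ker\op{L}_A$, and $P$ is a bounded idempotent (chain rule on $\pi\circ\pi=\pi$ at $A\in\fspace{N}$). Coset-invariance of $\pi$ gives $\fspace{C}\subseteq\ker P$; together with the spectral-gap splitting $\fspace{X}=\ker\op{L}_A\oplus\fspace{C}$ --- proved by solving $\zeta=\eta+c+g-g\circ T$ with $c=\int\zeta\,\dd\mu_A$, $g=(\op{L}_A-\Id)^{-1}(\op{L}_A\zeta-c\mathbf{1})$ on the complementary spectral subspace, and $\eta:=\zeta-c-g+g\circ T\in\ker\op{L}_A$ --- and with $\operatorname{im}P\cap\ker P=0$, it follows that $\ker P=\fspace{C}$. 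In particular $\fspace{C}$ is closed and $\fspace{X}=\ker\op{L}_A\oplus\fspace{C}$ is a topological direct sum.

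For the submanifold structure, set $E:=\ker\op{L}_A=\operatorname{im}P$ and consider $\Theta\colon A+E\to\fspace{X}$, $\Theta(A+x)=\pi(A+x)$. It is analytic, $\Theta(A)=A$, and $D\Theta_A=P|_E=\Id_E$, so by the analytic inverse function theorem $\Theta$ (composed with the projection onto $E$) is a local analytic diffeomorphism; its image lies in $\fspace{N}$. For $q\in\fspace{N}$ near $A$, write $q-A=x+\zeta$ with $x\in E$, $\zeta\in\fspace{C}$; then $q=\pi(q)=\pi(A+x+\zeta)=\pi(A+x)=\Theta(A+x)$ by coset-invariance, so $\Theta$ parametrizes a neighbourhood of $A$ in $\fspace{N}$. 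Hence $\fspace{N}$ is an analytic submanifold with $T_A\fspace{N}=\operatorname{im}D\Theta_A=\ker\op{L}_A$, which we have seen is a topological complement to $\fspace{C}$.

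The main obstacle is purely at the level of the standing hypotheses: one must ensure that $\fspace{X}(\Omega)$ is a Banach algebra on which $A\mapsto e^{A}$ is entire and $h\mapsto\log h$ is analytic on the positive cone, and that the perturbation theory applies to $\op{L}_A$ --- once that is granted, everything above is elementary algebra with the transfer operator, the only genuinely dynamical inputs being the identity $\op{L}_A(v\circ T)=v$ for normalized $A$ and the spectral-gap splitting used to invert $\op{L}_A-\Id$.
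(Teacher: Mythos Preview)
Your argument is correct, but it takes a genuinely different route from the paper's. The paper proves this by applying the implicit function theorem directly to the analytic map $F(A):=\op{L}_A(\mathbf{1})$, whose derivative is $DF_A=\op{L}_A$; the surjectivity of $\op{L}_A$ and the complementation $\fspace{X}=\ker\op{L}_A\oplus\fspace{C}$ (your ``spectral-gap splitting'') are established beforehand by elementary manipulations with $(I-\op{L}_A)^{-1}=\sum_k\op{L}_A^k$ on $\ker\mu_A$. No spectral perturbation theory is invoked; on the contrary, the analyticity of $\lambda_A$ and $h_A$ is \emph{deduced} afterwards from the analyticity of the normalization map $N=\Pi|_{\fspace{N}}^{-1}\circ\Pi$, obtained via the inverse function theorem applied to the restriction of the linear quotient map $\Pi:\fspace{X}\to\fspace{Q}$ to the submanifold $\fspace{N}$.

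Your approach inverts this logic: you import Kato-type analytic perturbation theory up front to get $A\mapsto h_A,\lambda_A$ analytic, build the retraction $\pi=N$ directly, and then read off the submanifold structure from the idempotent $D\pi_A$. This is perfectly valid and the retraction picture is geometrically pleasant --- in particular the observation that $\pi(A+x)-(A+x)\in\fspace{C}$ immediately exhibits $\fspace{N}$ as an analytic graph over $\ker\op{L}_A$. The trade-off is that the paper's argument is more self-contained (only the analyticity of $A\mapsto e^A$ in the Banach algebra and the implicit function theorem are used), whereas yours front-loads a nontrivial piece of operator theory that the authors explicitly set out to avoid. On the other hand, your route gives the analyticity and derivative of the normalization map in one stroke, which the paper has to establish separately.
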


From this we will easily deduce the analyticity
and derivative of several important maps. Consider:
\begin{itemize}
\item the \emph{normalization map} $N:\fspace{X}(\Omega)\to \fspace{N}$
which sends $A$ to the unique normalized potential
in its class modulo $\fspace{C}$,
\item the \emph{leading eigenvalue map} $\Lambda:A\mapsto \lambda_A$,
\item the \emph{leading eigenfunction map} $H:A\mapsto h_A$,
\item the \emph{Gibbs map} $G:A\mapsto \mu_A$ taking its
  values in $\fspace{X}(\Omega)^*$ with the convention that
  $\mu_A(\varphi)=\int\varphi\,\dd\mu_A$.
\end{itemize}

Then we get (Theorem \ref{theo:Nmap} to Proposition \ref{coro:analytic}):
\begin{coromain}\label{coromain:analytic}
The maps $N$, $\Lambda$, $H$, $G$ are analytic and for all
$A\in\fspace{X}(\Omega)$:
\begin{itemize}
\item the differential $DN_A$ of $N$ at $A$ is the linear projection on $\ker \op{L}_{N(A)}$ along $\fspace{C}$,
\item $D(\log\Lambda)_A=\mu_A$ as a linear form on $\fspace{X}(\Omega)$, i.e.
\[\left.\frac{\dd}{\dd t}\log\lambda_{A+t\zeta}\right|_{t=0}
  = \int\zeta \,\dd\mu_A \qquad\forall A,\zeta\in\fspace{X}(\Omega).\]
\end{itemize}
\end{coromain}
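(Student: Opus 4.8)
The plan is to build everything on top of Theorem~\ref{theomain:N}, which already identifies $\fspace{N}$ as an analytic submanifold with $T_A\fspace{N}=\ker\op{L}_A$ and $\fspace{C}$ as a topological complement; the analyticity of the various maps will then follow from the analytic implicit function theorem applied to well-chosen analytic equations, and the derivative formulas from differentiating those equations. First I would treat the normalization map $N$. Since $\fspace{X}(\Omega)=T_{N(A)}\fspace{N}\oplus\fspace{C}=\ker\op{L}_{N(A)}\oplus\fspace{C}$, there is a natural candidate for $DN_A$, namely the linear projection $\pi_A$ onto $\ker\op{L}_{N(A)}$ along $\fspace{C}$. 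To make this rigorous I would observe that $N$ is, near $A$, the composition of the inclusion $\fspace{N}\hookrightarrow\fspace{X}$ with a local analytic retraction of $\fspace{X}$ onto $\fspace{N}$ along the affine foliation by cosets of $\fspace{C}$; such a retraction exists and is analytic precisely because $\fspace{N}$ is an analytic submanifold transverse to the (constant) complement $\fspace{C}$. Differentiating $N(A+c+g-g\circ T)=N(A)$ in the $\fspace{C}$-directions shows $DN_A$ kills $\fspace{C}$, and $N|_{\fspace{N}}=\Id$ shows $DN_A$ restricts to the identity on $\ker\op{L}_{N(A)}$; since these two subspaces span $\fspace{X}$, this forces $DN_A=\pi_A$.

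Next I would handle $\Lambda$, $H$ and $G$ simultaneously by exhibiting the eigendata as the analytic solution of a fixed-point/implicit equation. The cleanest route: consider the map $F:\fspace{X}\times\fspace{X}\times\bR\to\fspace{X}\times\bR$ sending $(A,h,\lambda)$ to $(\op{L}_A h-\lambda h,\ \mu_{A_0}(h)-1)$ (or any convenient normalization of $h$, e.g.\ $\nu_{A_0}(h)=1$), where $A_0$ is the base point. The map $(A,h)\mapsto\op{L}_A h$ is bilinear-analytic in $A$ (through $e^A$, which is analytic $\fspace{X}\to\fspace{X}$) and linear in $h$, hence analytic; the partial derivative of $F$ in $(h,\lambda)$ at the solution $(A_0,h_{A_0},\lambda_{A_0})$ is $(\dot h,\dot\lambda)\mapsto(\op{L}_{A_0}\dot h-\lambda_{A_0}\dot h-\dot\lambda\,h_{A_0},\ \nu_{A_0}(\dot h))$, which is invertible exactly because the spectral gap makes $\lambda_{A_0}$ a simple isolated eigenvalue (so $\op{L}_{A_0}-\lambda_{A_0}$ is an isomorphism from the spectral complement onto itself, and the rank-one correction plus the normalization pin down the eigendirection). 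The analytic implicit function theorem then gives analytic maps $A\mapsto h_A=H(A)$ and $A\mapsto\lambda_A=\Lambda(A)$ locally, hence globally. For $G$, one writes $\mu_A=h_A\,\nu_A$ and checks $\nu_A$ is analytic by the same device applied to the dual operator $\op{L}_A^\ast$ on $\fspace{X}^\ast$ (its leading eigenprobability is again simple by the spectral gap); products and the normalization $\mu_A(\mathbf 1)=1$ being analytic operations, $G$ is analytic into $\fspace{X}^\ast$.

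For the derivative of $\log\Lambda$ I would differentiate the eigenvalue equation $\op{L}_{A+t\zeta}h_{A+t\zeta}=\lambda_{A+t\zeta}h_{A+t\zeta}$ at $t=0$. Using $\frac{\dd}{\dd t}\op{L}_{A+t\zeta}|_{t=0}h=\op{L}_A(\zeta h)$ (immediate from the definition of $\op{L}_A$, since differentiating $e^{A+t\zeta}$ brings down a factor $\zeta$), one gets
\[\op{L}_A(\zeta h_A)+\op{L}_A\dot h=\dot\lambda\,h_A+\lambda_A\dot h,\]
where $\dot h=DH_A(\zeta)$ and $\dot\lambda=D\Lambda_A(\zeta)$. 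Now apply the eigenmeasure $\nu_A$ of $\op{L}_A^\ast$: since $\nu_A\circ\op{L}_A=\lambda_A\nu_A$, the terms $\nu_A(\op{L}_A\dot h)$ and $\lambda_A\nu_A(\dot h)$ cancel, leaving $\lambda_A\nu_A(\zeta h_A)=\dot\lambda\,\nu_A(h_A)$. Dividing by $\lambda_A\nu_A(h_A)$ and recalling $\mu_A=h_A\nu_A$ normalized so that $\nu_A(h_A)=1$, we obtain $\dot\lambda/\lambda_A=\nu_A(\zeta h_A)=\int\zeta\,\dd\mu_A$, i.e.\ $D(\log\Lambda)_A=\mu_A$ as claimed. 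The only genuinely delicate point in the whole argument is the verification that the partial derivative in the implicit function theorem is an isomorphism — i.e.\ converting ``$\lambda_A$ is a simple eigenvalue with spectral gap'' into invertibility of the explicit linear operator above; everything else is bookkeeping with analyticity of $A\mapsto e^A$ and of products, composition, and the constant complement $\fspace{C}$.
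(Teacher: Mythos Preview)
Your argument is correct, but the paper takes a genuinely different (and arguably more economical) route for $\Lambda$, $H$, $G$ and the derivative formula. For $N$ you do essentially what the paper does (it phrases the retraction as $N=\Pi|_{\fspace{N}}^{-1}\circ\Pi$ with $\Pi:\fspace{X}\to\fspace{Q}$ the quotient map, then applies the inverse function theorem to $\Pi|_{\fspace{N}}$). The divergence is after that: rather than invoking the implicit function theorem on the eigenvalue equation, the paper writes down \emph{explicit} formulas
\[
\Lambda(A)=\exp\Big(\int (A-N(A))\,\dd\mu_{A_0}\Big),\qquad
H(A)=\exp\big(\op{M}_{A_0}(A-N(A))\big),
\]
which are visibly analytic once $N$ is; differentiating the first formula and then specializing $A_0=A$ (so that $DN_A(\zeta)\in\ker\op{L}_A\subset\ker\mu_A$ drops out) gives $D(\log\Lambda)_A(\zeta)=\int\zeta\,\dd\mu_A$ in one line. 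Analyticity of $G$ then follows for free from $G=D(\log\Lambda)$, with no need to treat $\nu_A$ separately via the dual operator. What your approach buys is self-containment: it recovers the eigendata regularity directly, and your derivation of $D(\log\Lambda)_A$ by pairing the differentiated eigenvalue equation with $\nu_A$ is the classical one. What the paper's approach buys is that the ``genuinely delicate point'' you flag --- checking that the linearized eigenvalue problem is an isomorphism --- is entirely bypassed: all the spectral work is absorbed into Theorem~\ref{theomain:N}, and the remaining steps are pure composition of analytic maps. This is exactly the point the authors stress when they say they avoid Kato-type perturbation theory.
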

The analyticity of these maps and the derivative of $\log\Lambda$
are well-known for many dynamical systems,\footnote{For an historical
account of the problem, see the introduction of \cite{BCV}
and references therein (among others \cite{PP},\cite{Man},\cite{Bar2}).}
however our framework is quite general (e.g. we do not assume
any high-temperature hypothesis) and our method
pretty elementary: we only use basic differential calculus, not
complex analysis nor Kato's theory of regularity of eigendata for
operators  (as done, for example, in \cite{PP}, \cite{SSS}).

\subsection{From integral differentiation to a Riemannian metric}

Both derivatives above are really easy to obtain, but the derivative
of $G$ is slightly more complicated (Theorem \ref{theo:affine-derivative}):
\begin{theomain}\label{theomain:DG}
For all $A,\varphi,\zeta\in\fspace{X}(\Omega)$ we have
\[\frac{\dd}{\dd t} \left. \int \varphi\,\dd \mu_{A+t\zeta} \right|_{t=0}
  = \int (I-\op{L}_{N(A)})^{-1}(\varphi_A) \cdot DN_A(\zeta) \,\dd\mu_A\]
where $\varphi_A=\varphi-\int\varphi\,\dd\mu_A$.
\end{theomain}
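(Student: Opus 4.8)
The plan is to reduce everything to the normalized case and then differentiate the fixed-point equation characterizing the Gibbs measure. Since adding a coboundary and a constant to $A$ does not change $\mu_A$, and since $G = G\circ N$, the chain rule gives $DG_A(\zeta) = DG_{N(A)}(DN_A(\zeta))$; by Corollary \ref{coromain:analytic} the second factor $DN_A(\zeta) \in \ker\op{L}_{N(A)}$ is the projection of $\zeta$ onto the tangent space of $\fspace{N}$. So it suffices to prove the formula when $A = B$ is already normalized and $\zeta \in \ker\op{L}_B$ is tangent to $\fspace{N}$; in that case $N(A) = A$, $DN_A(\zeta) = \zeta$, $h_A = \mathbf{1}$, $\lambda_A = 1$, and $\mu_A = \nu_A$ is the unique $\op{L}_B^\ast$-fixed probability.

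First I would record the normalized characterization: $B$ normalized means $\op{L}_B \mathbf{1} = \mathbf{1}$, equivalently $\op{L}_B^\ast \mu_B = \mu_B$, and the spectral gap means $I - \op{L}_B$ is invertible on the hyperplane $\{\varphi : \int\varphi\,\dd\mu_B = 0\}$ (its image being exactly that hyperplane, complementary to the constants). For $B_t := B + t\zeta$ with $\zeta\in\ker\op{L}_B$, the curve stays close to $\fspace{N}$ but is not exactly in it, so I would instead work with $\mu_{B_t}$ directly via the eigenmeasure equation. Write $\mu_t := \mu_{B_t} = h_{B_t}\nu_{B_t}$; from analyticity (Corollary \ref{coromain:analytic}) the curve $t\mapsto \mu_t \in \fspace{X}(\Omega)^\ast$ is differentiable, and I want $\dot\mu := \frac{\dd}{\dd t}\mu_t|_{t=0}$ evaluated against $\varphi$. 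The cleanest route: differentiate the identity $\int \op{L}_{B_t}(f)\,\dd\nu_t = \lambda_{B_t}\int f\,\dd\nu_t$, or better, use that for any test function the quantity $\int \varphi\,\dd\mu_t$ satisfies a relation coming from $\op{L}_{B_t}^\ast \mu_t = \lambda_{B_t} H(B_t)^{-1}\cdots$ — but to avoid bookkeeping with $h$ and $\lambda$ separately, the slickest is to differentiate the transfer-operator identity in the form $\op{L}_{B_t}(g) \to $ acting on $\mu_t$, exploiting $\frac{\dd}{\dd t}\op{L}_{B_t} = \op{L}_{B_t}(\zeta_t \,\cdot\,)$ where the derivative of the potential along the curve is $\zeta$ at $t=0$ since $DN$ is the identity there; more precisely $DG_B(\zeta) = DG_B(DN_B(\zeta))$ and it is enough to differentiate at a normalized point along a direction that is already tangent, so the $t$-dependence of $h$ and $\lambda$ is second order and drops out.

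Concretely, for $g \in \fspace{X}(\Omega)$ consider $F(t) = \int \op{L}_{B_t}(g)\,\dd\mu_{B_t}$. On one hand $\op{L}_{B_t}$ has leading data $\lambda_{B_t}, h_{B_t}$, and $\int \op{L}_{B_t}(g)\,\dd\nu_{B_t} = \lambda_{B_t}\int g\,\dd\nu_{B_t}$; combining with $\mu = h\nu$ one gets an identity relating $\int (\op{L}_B g) h^{-1}\,\dd\mu$-type expressions — here it pays to instead use the \emph{normalized} transfer operator $\op{L}_{N(B_t)} = \op{L}_{B_t + f_t - f_t\circ T - \log\lambda_{B_t}}$, for which $\op{L}_{N(B_t)}^\ast \mu_{B_t} = \mu_{B_t}$ exactly, i.e. $\int \op{L}_{N(B_t)}(g)\,\dd\mu_{B_t} = \int g\,\dd\mu_{B_t}$ for all $g$. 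Differentiate this at $t = 0$: since $N(B_t) = B + tDN_B(\zeta) + o(t) = B + t\zeta + o(t)$ (as $\zeta\in\ker\op{L}_B = T_B\fspace{N}$ so $DN_B(\zeta) = \zeta$), we have $\frac{\dd}{\dd t}\op{L}_{N(B_t)}(g)|_{t=0} = \op{L}_B(\zeta g)$. Hence
\[ \int \op{L}_B(\zeta g)\,\dd\mu_B + \int g\,\dd\dot\mu = 0 \qquad \forall g\in\fspace{X}(\Omega).\]
Using $\op{L}_B^\ast\mu_B = \mu_B$ on the first term, $\int \op{L}_B(\zeta g)\,\dd\mu_B = \int \zeta g\,\dd\mu_B$, so $\int g\,\dd\dot\mu = -\int \zeta g\,\dd\mu_B$ — but this cannot be the whole story since the left side must annihilate constants while the right side need not; the resolution is that $\dot\mu$ is a signed measure of total mass zero and the correct identity, after also differentiating the normalization $\int \mathbf 1\,\dd\mu_{B_t} = 1$ which forces a correction, is obtained by applying the above with $g = (I - \op{L}_B)^{-1}\varphi_B$ where $\varphi_B = \varphi - \int\varphi\,\dd\mu_B$. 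Then $g - \op{L}_B g = \varphi_B$, wait — I want to recover $\varphi$ on the left: take $g$ solving $(I - \op{L}_B)g = \varphi_B$, so that formally the relation with a smarter choice of test function, namely writing $\varphi = \varphi_B + \text{const}$ and noting $\int g\,\dd\dot\mu$ telescopes, yields
\[\frac{\dd}{\dd t}\int\varphi\,\dd\mu_{B_t}\Big|_{t=0} = \int (I - \op{L}_B)^{-1}(\varphi_B)\cdot \zeta\,\dd\mu_B,\]
and undoing the reduction (replacing $B$ by $N(A)$, $\zeta$ by $DN_A(\zeta)$) gives the stated formula. The main obstacle I anticipate is bookkeeping the $t$-derivative correctly: making rigorous that the $h_{B_t}$ and $\lambda_{B_t}$ contributions genuinely vanish at first order along a tangent direction, and identifying the right resolvent-valued test function so that the telescoping $\int g - \op{L}_B g$ reproduces $\varphi_B$ while the zero-mass constraint on $\dot\mu$ is respected. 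Careful use of Theorem \ref{theomain:N} (that $\ker\op{L}_{N(A)}$ is a topological complement of $\fspace{C}$, hence $I - \op{L}_{N(A)}$ is boundedly invertible on the mean-zero hyperplane) is what makes $(I - \op{L}_{N(A)})^{-1}(\varphi_A)$ well-defined and the argument legitimate.
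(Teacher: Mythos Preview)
Your overall strategy is sound and genuinely different from the paper's. You reduce to the normalized case via $G=G\circ N$ and then differentiate the fixed-point identity $\op{L}_{N(B_t)}^\ast\mu_{B_t}=\mu_{B_t}$; the paper instead takes two nearby \emph{normalized} potentials $A,B$ and directly expands $\int\varphi\,\dd(\mu_B-\mu_A)$ by writing $\op{L}_B(\varphi)-\op{L}_A(\varphi)=\op{L}_A(\varphi\cdot(B-A))+O(\lVert B-A\rVert^2)$, then iterating this identity via the spectral gap to sum the Neumann series $\sum_{n\ge 0}\op{L}_A^n$ and obtain the explicit remainder $O(\lVert\varphi\rVert\,\lVert B-A\rVert^2)$. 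Your route is a standard linear-response argument and is slicker once the analyticity of $G$ is in hand; the paper's route is more self-contained and delivers the quantitative Fr\'echet remainder directly.

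There is, however, a genuine slip in your differentiation of $\int \op{L}_{N(B_t)}(g)\,\dd\mu_{B_t}=\int g\,\dd\mu_{B_t}$. The product rule on the left gives \emph{two} terms, so at $t=0$ one gets
\[
\int \op{L}_B(\zeta g)\,\dd\mu_B + \int \op{L}_B(g)\,\dd\dot\mu \;=\; \int g\,\dd\dot\mu,
\]
i.e.\ $\int \zeta g\,\dd\mu_B=\int (I-\op{L}_B)(g)\,\dd\dot\mu$, not the displayed equation you wrote (you dropped the $\int\op{L}_B(g)\,\dd\dot\mu$ term and mis-signed). With the correct identity, choosing $g=(I-\op{L}_B)^{-1}\varphi_B$ gives immediately
\[
\int\varphi\,\dd\dot\mu \;=\; \int\varphi_B\,\dd\dot\mu \;=\; \int (I-\op{L}_B)^{-1}(\varphi_B)\cdot\zeta\,\dd\mu_B,
\]
and no ad hoc ``telescoping'' or mass-correction is needed: the worry you raised about constants disappears because $(I-\op{L}_B)(g)$ automatically lies in $\ker\mu_B$, matching $\dot\mu(\mathbf{1})=0$. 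Once this is fixed, your argument goes through cleanly.
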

(Note that of course the left-hand-side is $DG_A(\zeta)\in\fspace{X}(\Omega)^*$
applied at $\varphi$.)

This derivative can then be expressed in various forms using standard
computations, see Sections \ref{sec:integral} and \ref{sec:metric},
and some interesting connections appear.
\begin{theomain}\label{theomain:metric}
All the following expressions
\begin{align*}
\langle\zeta,\eta\rangle_A &= D^2(\log\Lambda)_A(\zeta,\eta) \\
\langle\zeta,\eta\rangle_A &= \frac{\dd}{\dd t} \left. \int \eta \,\dd\mu_{A+t\zeta}\right|_{t=0} \\
\langle\zeta,\zeta\rangle_A &= \Var(\zeta_A,\mu_A) := \lim_{n \to \infty}
\frac{1}{n} \int \Big(\sum_{i=0}^{n-1} \zeta_A\circ T^i \Big)^2 \,\dd\mu_A\\
\langle\zeta,\eta\rangle_A &= \int \zeta\eta \,\dd\mu_A
  \quad \mbox{whenever } A\in\fspace{N}, \zeta,\eta\in T_A\fspace{N}.
\end{align*}
define the same analytic map $A\mapsto \langle\cdot,\cdot\rangle_A$ from
$\fspace{X}(\Omega)$ to the Banach space of symmetric linear $2$-forms,
such that $\langle\cdot,\cdot\rangle_A$ is positive-semi-definite
with kernel equal to $\fspace{C}$ for all $A$.
This map induces by restriction
a weak Riemannian metric on $\fspace{N}$, and then by projection
it induces a weak Riemannian metric on $\fspace{Q}=\fspace{X}(\Omega)/\fspace{C}$.
\end{theomain}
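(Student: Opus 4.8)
The plan is to show that the four displayed expressions agree and that the common object has the claimed positivity and kernel, then deduce the Riemannian statements. I would organize the argument around Theorem \ref{theomain:DG}, which already provides the derivative of the Gibbs map, and around Corollary \ref{coromain:analytic}, which gives $D(\log\Lambda)_A=\mu_A$. First, differentiating the identity $D(\log\Lambda)_A(\eta)=\int\eta\,\dd\mu_A=\mu_A(\eta)$ once more in the direction $\zeta$ shows directly that $D^2(\log\Lambda)_A(\zeta,\eta)=\tfrac{\dd}{\dd t}\big|_{t=0}\int\eta\,\dd\mu_{A+t\zeta}$, which is the symmetry of the Hessian and simultaneously the equality of the first two lines. (Symmetry of $D^2(\log\Lambda)_A$ as a bilinear form follows from Schwarz's theorem once analyticity of $\Lambda$ is known, so the second expression is automatically symmetric in $\zeta,\eta$ even though this is not obvious from its form.)

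Next, for the third expression, I would insert the formula of Theorem \ref{theomain:DG} with $\varphi=\zeta$ and compute $\langle\zeta,\zeta\rangle_A=\int (I-\op{L}_{N(A)})^{-1}(\zeta_A)\cdot DN_A(\zeta)\,\dd\mu_A$, then recognize this as the asymptotic variance. The standard route is the Green--Kubo type identity: writing $u=(I-\op{L}_{N(A)})^{-1}\zeta_A$, one expands $\Var(\zeta_A,\mu_A)$ via the telescoping sum and uses the fact that $\mu_A$ is $\op{L}_{N(A)}^*$-almost invariant together with the duality $\int (\op{L}_B f)\, g\,\dd\mu_B=\int f\,(g\circ T)\,\dd\mu_B$ valid for a normalized potential $B$; since both the variance and the bilinear form are invariant under replacing $A$ by $N(A)$ and $\zeta$ by $DN_A(\zeta)$, this reduces the computation to the normalized case. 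In that case the classical identity $\Var(\zeta,\mu_B)=\int\zeta^2\,\dd\mu_B+2\sum_{n\ge1}\int\zeta\,(\zeta\circ T^n)\,\dd\mu_B=\int\big(2(I-\op{L}_B)^{-1}\zeta-\zeta\big)\zeta\,\dd\mu_B$ yields exactly the required expression and simultaneously gives the fourth line: when $A=B\in\fspace{N}$ and $\zeta,\eta\in T_A\fspace{N}=\ker\op{L}_A$, each $\op{L}_B^n\zeta$ for $n\ge1$ vanishes, so all the correlation terms die and $\langle\zeta,\eta\rangle_A=\int\zeta\eta\,\dd\mu_A$. I expect this variance identification to be the main obstacle, mostly a matter of bookkeeping with the spectral gap to justify convergence of the series and the interchange of limit and integral, but no genuinely new idea is needed.

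Having established that the four formulas define one analytic map $A\mapsto\langle\cdot,\cdot\rangle_A$ (analyticity being inherited from that of $\Lambda$, or of $G$, via Corollary \ref{coromain:analytic}), positivity and the kernel computation follow from the third expression: the asymptotic variance is manifestly non-negative, and $\Var(\zeta_A,\mu_A)=0$ forces $\zeta_A$ to be an $L^2(\mu_A)$-coboundary, hence (using the spectral gap and regularity of solutions of the cohomological equation in $\fspace{X}(\Omega)$, as in the classical Livšic-type argument) $\zeta_A=g-g\circ T$ with $g\in\fspace{X}(\Omega)$, so $\zeta=\zeta_A+\int\zeta\,\dd\mu_A\in\fspace{C}$; conversely every element of $\fspace{C}$ is in the kernel since adding a constant or a coboundary does not change $\mu_A$ and hence does not change the derivative of $\int\eta\,\dd\mu_{A+t\zeta}$. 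Finally, restricting to $\fspace{N}$: at $A\in\fspace{N}$ the tangent space $T_A\fspace{N}=\ker\op{L}_A$ is by Theorem \ref{theomain:N} a topological complement to $\fspace{C}$, so the restriction of $\langle\cdot,\cdot\rangle_A$ to $T_A\fspace{N}$ is positive-definite and depends analytically on $A$; this is by definition a weak Riemannian metric (weak because $\fspace{X}(\Omega)$ need not be Hilbert and the form need not induce the topology). Passing to $\fspace{Q}=\fspace{X}(\Omega)/\fspace{C}$, the quotient map restricted to $\fspace{N}$ is an analytic diffeomorphism by Theorem \ref{theomain:N} and Corollary \ref{coromain:analytic}, and since the form has kernel exactly $\fspace{C}$ it descends unambiguously to $\fspace{Q}$, giving the last assertion.
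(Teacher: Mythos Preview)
Your argument is largely aligned with the paper's and correct in its main lines: the identification of the first two expressions via differentiating $D(\log\Lambda)_A=\mu_A$, the Green--Kubo expansion for the variance, and the collapse of the correlation terms when $\zeta\in\ker\op{L}_A$ all match the paper's reasoning in Section~\ref{sec:metric}.

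The one genuine weak point is your kernel computation. You argue that $\Var(\zeta_A,\mu_A)=0$ forces $\zeta_A$ to be an $L^2(\mu_A)$-coboundary, and then invoke a Liv\v{s}ic-type regularity result to upgrade this to $\zeta_A=g-g\circ T$ with $g\in\fspace{X}(\Omega)$. Neither step is available in the abstract framework \ref{hypo:Banach}--\ref{hypo:support}: the passage from vanishing asymptotic variance to $L^2$-coboundary already needs some ergodic-theoretic input, and the regularity upgrade is a genuine Liv\v{s}ic theorem that the paper neither assumes nor proves. The paper instead uses a much more elementary device (Proposition~\ref{prop:isotropy}): since $\ker\op{L}_{N(A)}$ and $\fspace{C}$ are topological complements (Proposition~\ref{prop:LC}), write $\zeta=\zeta'+f$ with $\zeta'\in\ker\op{L}_{N(A)}$ and $f\in\fspace{C}$; then $\langle\zeta,\zeta\rangle_A=\langle\zeta',\zeta'\rangle_A=\int(\zeta')^2\,\dd\mu_A$ by the fourth expression, and hypothesis~\ref{hypo:support} gives $\zeta'=0$, hence $\zeta\in\fspace{C}$. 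This avoids any cohomological equation entirely, and you should replace your Liv\v{s}ic step by this decomposition argument.
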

The metric $\langle\cdot,\cdot\rangle_A$ is thus a close cousin to
McMullen's variance metric introduced in the context of Teichm\"uller
space \cite{Mac}\footnote{See also \cite{BCS} by Bridgeman, Canary and
Sambarino and references therein, and \cite{PS} by Pollicott and Sharp
for an analogous metric of Weyl-Patterson type on spaces of metric graphs.}
(up a conformal rescaling by
entropy), contains the derivative of the Gibbs map, controls
the convexity of $\log\Lambda$ and extends the $L^2(\mu_A)$ metric on
$\fspace{N}$ at the same time.

In the closing Section \ref{sec:2symbols}, we show a concrete example of this approach.  When the dynamics is just the shift on the Bernoulli space
$\{1,2\}^\mathbb{N}$ and the potential depends only on two coordinates, we exhibit the metric explicitly and we compute the curvature (Proposition \ref{metricprop}),
 which is positive. In analogy with the work of McMulleen, one
could conjecture that when our metric is rescaled by the entropy, the curvature is strictly negative, but we show that this is not the case.

\subsection{The optimal transportation approach to the differentiability of
measure-valued maps}

Above we took a very common point of view, considering the Gibbs map
$G:A\mapsto\mu_A$ as taking value in (an affine subspace of) the
Banach space $\fspace{X}(\Omega)^*$, yielding an obvious differential structure
in which each $\varphi\in\fspace{X}(\Omega)$ defines a ``coordinate function''
by $\mu_A\mapsto \mu_A(\varphi) = \int\varphi\,\dd\mu_A$.
We call this the ``affine differential
structure''.

However, this is not the only way to study the regularity of such a map,
and in Section \ref{sec:Was} we study the ``Wasserstein differential structure'' aspect of the question.
One can see $G$ as taking values in the subset $\spacem{P}_T(\Omega)$
of $T$-invariant measures
in the set $\spacem{P}(\Omega)$ of all probability measures, and use
the differential framework based on the $2$-Wasserstein distance $W_2$
from optimal transportation which has been developed in the last fifteen
years.\footnote{The full story does not fit in the bottom margin, but
let us mention the important cornerstones which are the works of
Otto \cite{Otto}, Benamou and Brenier \cite{BB}, and Ambrosio, Gigli
and Savar\'e \cite{AGS}; see also \cite{Vi1}, \cite{Villani2} and \cite{Gigli:structure}.}
This point of view proved useful in the study of the action of expanding circle maps
near the absolutely continuous invariant measure by one of the present authors (see \cite{Kl,Kl2}); here, we show that with the $2$-Wasserstein metric the Gibbs map $A\mapsto \mu_A$ is
far from being differentiable even in the simplest smoothest case.
\begin{theomain}\label{theomain:roughness}
Assume $T=x\mapsto dx \mod 1$ is the standard $d$-self-covering map of the
circle $\mathbb{S}^1=\mathbb{R}/\mathbb{Z}$ and $\fspace{X}(\Omega)$ is the
space of $\alpha$-H\"older functions
for some $\alpha\in(0,1]$. Then
given any smooth path $(A_t)$ in $\fspace{X}(\Omega)$,
the path of Gibbs measures $(\mu_{A_t})$ is not absolutely continuous
in $(\spacem{P}(\Omega),W_2)$ unless it is constant.
\end{theomain}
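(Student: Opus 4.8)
The plan is to argue by contradiction: suppose $(A_t)$ is a smooth path with $(\mu_{A_t})$ \emph{not} constant but absolutely continuous in $(\spacem{P}(\mathbb{S}^1),W_2)$, and produce a single time $t$ at which the velocity field that the theory of absolutely continuous Wasserstein curves attaches to $\mu_{A_t}$ is incompatible with the explicit form of $DG_{A_t}(\dot A_t)$ from Theorem~\ref{theomain:DG}. To find a good time, note that $G=G|_{\fspace N}\circ N$ with $N$ analytic (Corollary~\ref{coromain:analytic}) and $G|_{\fspace N}$ injective (Theorem~\ref{theomain:N}), so the $C^1$ curve $t\mapsto N(A_t)$ in $\fspace N$ is non-constant; hence $u_{t_0}:=DN_{A_{t_0}}(\dot A_{t_0})=\tfrac{\dd}{\dd t}N(A_t)\big|_{t_0}\neq 0$ for some $t_0$, and $u_t\neq 0$ for $t$ in a neighbourhood $J$ of $t_0$. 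On $J$ this curve is an immersion, so it meets the unique normalized potential whose Gibbs measure is $\mathrm{Leb}$ (namely $(-\log d)\,\mathbf 1$, since $\op L_{(-\log d)\mathbf 1}\mathbf 1=\mathbf 1$ and $\op L_{(-\log d)\mathbf 1}^\ast\mathrm{Leb}=\mathrm{Leb}$) only on a discrete set; as $\mathrm{Leb}$ is a Gibbs measure and distinct Gibbs measures are distinct ergodic measures, hence mutually singular, I get $\mu_{A_t}\perp\mathrm{Leb}$ for a.e. $t\in J$.

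Next I would make $DG_{A_t}(\dot A_t)$ explicit. Fix $t\in J$ and write $\mu=\mu_{A_t}$, $B=N(A_t)$, $u=u_t\in\ker\op L_B$. From $\op L_B\mathbf 1=\mathbf 1$ and $\op L_B^\ast\mu=\mu$ one has $\int(\op L_B^nf)\,g\,\dd\mu=\int f\,(g\circ T^n)\,\dd\mu$ for all $n$, and in particular $\int(u\circ T^n)\,\dd\mu=\int u\,\dd\mu=\int\op L_B u\,\dd\mu=0$. Expanding $(I-\op L_B)^{-1}$ as a geometrically convergent Neumann series on the spectral complement $\{\int\cdot\,\dd\mu=0\}$ of the leading eigenline (using the spectral gap) and substituting into Theorem~\ref{theomain:DG} gives
\[
  \psi_t(\varphi):=DG_{A_t}(\dot A_t)(\varphi)=\sum_{n\geq 0}\int\varphi\,(u\circ T^n)\,\dd\mu\qquad(\varphi\in\fspace X(\mathbb{S}^1)),
\]
with geometric convergence; equivalently $\psi_t=\tfrac{\dd}{\dd t}\mu_{A_t}=\sum_{n\geq 0}(u\circ T^n)\,\mu$ as a distribution on $\mathbb{S}^1$, whose primitive is the function $\Phi_t(x)=\sum_{n\geq 0}\int_{[0,x]}(u\circ T^n)\,\dd\mu$. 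The crucial point is that $\Phi_t$ is a \emph{bounded} function: each summand equals $\operatorname{Cov}_\mu(\mathbf 1_{[0,x]},u\circ T^n)$ since $\int u\,\dd\mu=0$, and this decays geometrically, uniformly in $x$, by the spectral gap of $\op L_B$ on $\fspace X$ together with the upper regularity $\mu(B(x,\varepsilon))\le C\varepsilon^{s}$ of Gibbs measures and an approximation of $\mathbf 1_{[0,x]}$ by Lipschitz functions. Hence $\Phi_t\in L^\infty(\mathbb{S}^1)$ and $\partial_x\Phi_t=\psi_t$.

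Then I would close the argument. If $(\mu_{A_t})_{t\in J}$ is absolutely continuous in $W_2$, the characterization of such curves (Ambrosio--Gigli--Savaré, \cite{AGS}) yields Borel velocities $v_t\in L^2(\mu_{A_t})\subset L^1(\mu_{A_t})$, finite for a.e. $t$, with $\partial_t\mu_{A_t}+\partial_x(v_t\mu_{A_t})=0$ in $\mathcal D'(\mathbb{S}^1\times J)$; testing against $\varphi(x)\chi(t)$ with $\varphi\in C^\infty\subset\fspace X$ and using that $t\mapsto\int\varphi\,\dd\mu_{A_t}$ is $C^1$ with derivative $\psi_t(\varphi)$ gives $\psi_t=-\partial_x(v_t\mu_{A_t})$ for a.e. $t$. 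Choose such a $t\in J$ also satisfying $u_t\neq 0$ and $\mu_{A_t}\perp\mathrm{Leb}$, as above. Since any two primitives of $\psi_t$ on the circle differ by a constant, the signed measure $v_t\,\mu_{A_t}$ and $-\Phi_t\,\mathrm{Leb}$ differ by a constant multiple of $\mathrm{Leb}$, so $v_t\,\mu_{A_t}$ is absolutely continuous with respect to $\mathrm{Leb}$; being also absolutely continuous with respect to $\mu_{A_t}\perp\mathrm{Leb}$, it vanishes, i.e. $\psi_t=0$. This contradicts $\psi_t(u_t)=DG_{A_t}(\dot A_t)(u_t)=\langle\dot A_t,u_t\rangle_{A_t}=\langle u_t,u_t\rangle_{A_t}=\int u_t^2\,\dd\mu_{A_t}>0$, where the steps use Theorem~\ref{theomain:metric}, that $\dot A_t-u_t\in\fspace C=\ker\langle\cdot,\cdot\rangle_{A_t}$ (Corollary~\ref{coromain:analytic}), that $\langle\cdot,\cdot\rangle$ descends to $\fspace Q$ and thus restricts to the $L^2(\mu_B)$ form on $\ker\op L_B=T_B\fspace N$, and that $\mu_B$ has full support with $u_t\ne0$.

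I expect the only non-routine step to be the boundedness of the primitive $\Phi_t$: it requires quantitative mixing of $(T,\mu_{A_t})$ against the \emph{discontinuous} observables $\mathbf 1_{[0,x]}$, uniformly in $x$, rather than merely the Hölder spectral gap, so one must combine that gap with the regularity of Gibbs measures. Conceptually this is exactly the mechanism behind the roughness: a non-trivial Gibbs measure is too irregular at small scales to be displaced by any finite-energy velocity field, whereas the admissible perturbation directions $u_t=DN_{A_t}(\dot A_t)$ are as smooth as the potentials.
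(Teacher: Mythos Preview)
Your argument is correct, but it takes a genuinely different route from the paper's. Both proofs reduce to showing that at almost every time $t$ with $\mu_{A_t}\neq\mathrm{Leb}$, the distributional time-derivative $\psi_t=DG_{A_t}(\dot A_t)$ must vanish, and both hinge on the singularity of non-Lebesgue Gibbs measures with respect to $\mathrm{Leb}$; the difference lies in how this singularity is exploited. The paper proceeds via a constructive approximation lemma: for any atomless $\mu\perp\mathrm{Leb}$ and any $f\in L^2(\mu)$, one can build smooth $\varphi_n$ with $\varphi_n'\to f$ in $L^2(\mu)$ while $\varphi_n\to 0$ in $\Holder_\beta$. Applying this with $f=v$ (the Wasserstein velocity) and plugging $\varphi_n$ into both the continuity-equation expression $\int\varphi_n' v\,\dd\mu$ and the affine formula from Theorem~\ref{theomain:DG} forces $\int v^2\,\dd\mu=0$. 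Your approach instead works with distributional primitives: you show that $\psi_t$ admits a primitive $\Phi_t\cdot\mathrm{Leb}$ that is absolutely continuous with respect to Lebesgue (this is where you need the extra ingredient of H\"older upper regularity of Gibbs measures, $\mu(B(x,\varepsilon))\le C\varepsilon^s$, to get uniform decay of correlations against indicators), while the continuity equation furnishes a primitive $-v_t\,\mu_{A_t}$ absolutely continuous with respect to $\mu_{A_t}$; mutual singularity then kills both. The paper's construction is more elementary and self-contained (it needs only singularity and atomlessness, not the Frostman-type regularity), whereas your argument is more structural and makes the mechanism---incompatibility between the Lebesgue-type smoothness of $\psi_t$ coming from mixing and the $\mu_t$-type smoothness required by Wasserstein transport---transparent without any explicit construction.
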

Recall that a path in a metric space defined on an interval $I$
is said to be \emph{absolutely
continuous} when it has a metric speed in $L^1(I)$: this is
a very weak regularity, so that Theorem \ref{theomain:roughness}
can be interpreted as meaning that a small perturbation of the potential induces a brutal reallocation of the mass distribution
in the sense of $W_2$. This contrasts with
a Lipschitz regularity result obtained for the $1$-Wasserstein metric
in \cite{KLS} (but note that $W_1$ does not yield a differential structure).

\subsection{Applications to equilibrium states}

We end this introduction by presenting some applications and illustrations
of our differential calculus
setting and the metric obtained above.

First we show that it is a nice framework to derive and extend to
our framework the existence and uniqueness of equilibrium states.
Consider the following optimization problems
and induced functionals:
\begin{align*}
\entropy(\mu) &:= \inf_{A\in\fspace{X}(\Omega)} \log\lambda_A-\int A\,\dd\mu
  &\mbox{for }\mu &\in\spacem{P}_T(\Omega) \\
\pressure(B) &:=\sup_{\mu\in\spacem{P}_T(\Omega)}\entropy(\mu)+\int B\,\dd\mu
  &\mbox{for } B &\in\fspace{X}(\Omega)
\end{align*}
(recall that $\fspace{X}(\Omega)$ is a suitable space of potentials
$\Omega\to \mathbb{R}$ which we can chose with some freedom).

\begin{theomain}\label{theomain:pressure}
For all $B\in\fspace{X}(\Omega)$, the supremum in the definition of $\pressure(B)$
is uniquely realized by $\mu_B$ and it holds $\pressure(B)=\log\lambda_B$.
\end{theomain}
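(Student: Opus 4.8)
The plan is to read $\pressure$ and $\entropy$ as a Legendre-type dual pair and to exploit the two facts, already in hand from Corollary~\ref{coromain:analytic} and Theorem~\ref{theomain:metric}, that $\log\Lambda$ is an analytic function on $\fspace{X}(\Omega)$ whose differential at $B$ equals $\mu_B$, and whose second differential $\langle\cdot,\cdot\rangle_A$ is positive-semi-definite everywhere, so that $\log\Lambda$ is convex.

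First I would dispatch the easy inequality: for any $\mu\in\spacem{P}_T(\Omega)$, plugging $A=B$ into the infimum defining $\entropy(\mu)$ gives $\entropy(\mu)\le\log\lambda_B-\int B\,\dd\mu$, hence $\entropy(\mu)+\int B\,\dd\mu\le\log\lambda_B$ and therefore $\pressure(B)\le\log\lambda_B$; in particular $\entropy(\mu_B)\le\log\lambda_B-\int B\,\dd\mu_B$. Next I would prove the matching bound $\entropy(\mu_B)\ge\log\lambda_B-\int B\,\dd\mu_B$, which amounts to $\log\lambda_A-\log\lambda_B\ge\int(A-B)\,\dd\mu_B$ for every $A\in\fspace{X}(\Omega)$, i.e.\ to the statement that $\log\Lambda$ lies above its tangent hyperplane at $B$. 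To see this, set $g(t):=\log\lambda_{B+t(A-B)}$; by Corollary~\ref{coromain:analytic} one has $g'(t)=\int(A-B)\,\dd\mu_{B+t(A-B)}$, and by Theorem~\ref{theomain:metric} one has $g''(t)=\langle A-B,A-B\rangle_{B+t(A-B)}\ge 0$, so $g$ is convex and $g(1)\ge g(0)+g'(0)$, which is exactly the desired inequality. Hence $\entropy(\mu_B)=\log\lambda_B-\int B\,\dd\mu_B$, so $\pressure(B)\ge\entropy(\mu_B)+\int B\,\dd\mu_B=\log\lambda_B$; combined with the easy inequality this yields $\pressure(B)=\log\lambda_B$, realized by $\mu=\mu_B$.

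For uniqueness, suppose $\mu\in\spacem{P}_T(\Omega)$ also realizes the supremum. Then $\entropy(\mu)=\pressure(B)-\int B\,\dd\mu=\log\lambda_B-\int B\,\dd\mu$, which is a finite real number, and it equals $\inf_{A}\big(\log\lambda_A-\int A\,\dd\mu\big)$; since the value at $A=B$ is $\log\lambda_B-\int B\,\dd\mu$, the analytic function $\phi:A\mapsto\log\lambda_A-\int A\,\dd\mu$ attains its global minimum at $B$. Differentiating $t\mapsto\phi(B+t\zeta)$ at $t=0$ for an arbitrary $\zeta\in\fspace{X}(\Omega)$ forces $D\phi_B=0$, which by Corollary~\ref{coromain:analytic} reads $\int\zeta\,\dd\mu_B=\int\zeta\,\dd\mu$ for all $\zeta\in\fspace{X}(\Omega)$. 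Since $\fspace{X}(\Omega)$ is dense in $C(\Omega)$ (part of the standing hypotheses; e.g.\ H\"older functions form a point-separating algebra containing the constants, so Stone--Weierstrass applies), two probability measures agreeing on $\fspace{X}(\Omega)$ coincide, whence $\mu=\mu_B$.

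The computations are routine once Corollary~\ref{coromain:analytic} and Theorem~\ref{theomain:metric} are available; the one delicate point is the uniqueness step. There, the key observation is that any maximizer $\mu$ automatically has \emph{finite} entropy (because $\pressure(B)$ and $\int B\,\dd\mu$ are finite), which is what promotes $B$ from an approximate to a genuine global minimizer of $\phi$ and hence a critical point; and one then needs the separation/density property of $\fspace{X}(\Omega)$ to upgrade equality of $\fspace{X}(\Omega)$-integrals to equality of measures. I expect this separation property to be the only ingredient here that goes beyond what the Ruelle--Perron--Frobenius setup already provides.
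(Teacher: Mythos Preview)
Your proof is correct and follows essentially the same route as the paper's. The paper organizes the argument by first proving (Proposition~\ref{prop:diff-entropy} and Corollary~\ref{coro:vp-entropy}) that the functional $H_\nu(A)=\log\lambda_A-\int A\,\dd\nu$ is strictly convex on $\fspace{Q}$ with derivative $\mu_A-\nu$, so that $H_{\mu_B}$ is uniquely minimized at $[B]$; it then observes $P_B(\mu)-\log\lambda_B=\inf_A H_\mu(A)-H_\mu(B)$ and reads off the conclusion. Your version unpacks the same ingredients directly: the one-line convexity computation $g''(t)=\langle A-B,A-B\rangle_{B+t(A-B)}\ge 0$ is exactly what underlies the paper's strict convexity statement, and your uniqueness step (any maximizer $\mu$ forces $B$ to be a critical point of $H_\mu$, hence $\mu=\mu_B$ via the separation hypothesis~\ref{hypo:large}) is precisely the critical-point analysis in the proof of Corollary~\ref{coro:vp-entropy}. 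One small observation: your uniqueness argument only uses that $B$ is a minimizer, hence a critical point, and then the separation property; you never actually invoke \emph{strict} convexity (which would require~\ref{hypo:support}), so in that sense your presentation is marginally more economical than the paper's packaging.
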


We show in Remark \ref{rema:definition2} that for the case of the Classical Thermodynamical Formalism in the sense of \cite{PP}  (the shift acting on the Bernoulli space) and for any invariant probability $\mu$ we have equality between $\entropy (\mu)$ and the metric entropy of $\mu$.  In this case
the pressure $\pressure{}$ defined above also coincides with the usual
topological pressure.
We consider however more general hypothesis in our reasoning. We will refer to $\entropy{}$ and $\pressure{}$ as ``entropy'' and
``pressure'' from now on.

We then observe that the metric $\langle\cdot,\cdot\rangle_A$
enables us to define the gradient of various natural
dynamical quantities, including entropy and pressure
(see Proposition \ref{prop:gradients}). This gives sense to
the gradient flow of the functional
\[A\mapsto \entropy(\mu_A)+\int B \,\dd\mu_A\]
obtained by composing $G$ with the functional defining the pressure.
This gradient flow has a \emph{linear} form when expressed in the quotient
space $\fspace{Q}$ and
can serve as a model for non-equilibrium dynamics, according to which
a system out of
equilibrium behaves just like a system at equilibrium with a
varying potential (Section \ref{sec:gradient-flow}). In case of
a mere change in the temperature of the system's environment, this model
predicts the physically sound property that the systems
evolves only in its temperature (Remark \ref{rema:temperature}).

As a consequence of Theorems \ref{theomain:metric}
and \ref{theomain:pressure}
we obtain several results related to works by Kucherenko and Wolf.
The first result, obtained in \cite{KW1} under somewhat different
hypotheses, is a prescription result.
Given $\Phi=(\varphi_1,\dots,\varphi_K)$ a tuple of test functions
in $\fspace{X}(\Omega)$, the ``rotation vector''
\[\rv(\mu) = \big(\int\varphi_1,\dd\mu,\dots,\int\varphi_K\,\dd\mu)\]
of a $T$-invariant measure describes some convex set
$\Rot(\Phi)\subset \mathbb{R}^K$. The result is then that
for all base potential $B$, every interior value of $\Rot(\Phi)$ can be
realized uniquely as the Gibbs measure of a potential of the
form $B+a_1\varphi_1+\dots+a_K\varphi_K$ (Theorem \ref{theo:prescribing}).

The second result states existence and uniqueness of
equilibrium states under linear constraints;
it is very close to Theorem B of \cite{KW2}, but
even disregarding the difference in our hypotheses we obtain
a more precise description of the equilibrium state:
the parameter $s$ of \cite{KW2} is always equal to $1$. In other words:
\begin{theomain}\label{theomain:constraints}
Let $\Phi=(\varphi_1,\dots,\varphi_K)\in\fspace{X}(\Omega)$ be such that
$0\in \interior \Rot(\Phi)$.
Given any $B\in\fspace{X}(\Omega)$, the restriction
of
\[P_B:\mu\mapsto \entropy(\mu)+\int B\,\dd\mu\]
to the set $\spacem{P}_T[\Phi]$
of  invariant measures realizing $\int\varphi_k\,\dd\mu=0$ for all $k$
is uniquely maximized at the unique Gibbs measure in $\spacem{P}_T[\Phi]$
that is defined by a potential of the form
$B+a_1\varphi_1+\dots+a_K\varphi_K$.
\end{theomain}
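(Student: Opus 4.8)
The plan is to reduce the constrained optimization over $\spacem{P}_T[\Phi]$ to an unconstrained one via Lagrange multipliers, and then invoke Theorem \ref{theomain:pressure} fibrewise. First I would fix $B$ and, for each $a=(a_1,\dots,a_K)\in\mathbb{R}^K$, set $B_a:=B+a_1\varphi_1+\dots+a_K\varphi_K$. By Theorem \ref{theomain:pressure}, the functional $P_{B_a}$ is uniquely maximized over all of $\spacem{P}_T(\Omega)$ at $\mu_{B_a}$, with value $\log\lambda_{B_a}$. Now for any $\mu\in\spacem{P}_T[\Phi]$ we have $\int\varphi_k\,\dd\mu=0$, hence $P_{B_a}(\mu)=P_B(\mu)$ on that constraint set; therefore for \emph{every} $a$,
\[ \sup_{\mu\in\spacem{P}_T[\Phi]} P_B(\mu) \;=\; \sup_{\mu\in\spacem{P}_T[\Phi]} P_{B_a}(\mu) \;\le\; \sup_{\mu\in\spacem{P}_T(\Omega)} P_{B_a}(\mu) \;=\; \log\lambda_{B_a}. \]
So it suffices to find $a^\ast$ with $\mu_{B_{a^\ast}}\in\spacem{P}_T[\Phi]$: then the upper bound $\log\lambda_{B_{a^\ast}}$ is attained at $\mu_{B_{a^\ast}}$, which is thus the unique maximizer, and uniqueness of the maximizer of $P_{B_{a^\ast}}$ over the whole space gives uniqueness within $\spacem{P}_T[\Phi]$ as well. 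The condition $\mu_{B_{a^\ast}}\in\spacem{P}_T[\Phi]$ is exactly $\rv(\mu_{B_{a^\ast}})=0$, i.e. $a^\ast$ is a zero of the map $F:a\mapsto\rv(\mu_{B_a})=\big(\int\varphi_1\,\dd\mu_{B_a},\dots,\int\varphi_K\,\dd\mu_{B_a}\big)$.

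The heart of the argument is then to show $F:\mathbb{R}^K\to\mathbb{R}^K$ has a zero. I would recognize $F$ as (essentially) the gradient of the concave function $\Psi:a\mapsto\log\lambda_{B_a}$: by Corollary \ref{coromain:analytic}, $\partial_{a_j}\log\lambda_{B_a}=\int\varphi_j\,\dd\mu_{B_a}$, so $\nabla\Psi=F$. Moreover Theorem \ref{theomain:metric} identifies the Hessian of $a\mapsto\log\lambda_{B_a}$ with the variance form $\langle\cdot,\cdot\rangle$ evaluated on the $\varphi_k$'s, which is negative-semi-definite, and is negative-\emph{definite} precisely when the $\varphi_k$ are linearly independent modulo $\fspace{C}$ — a nondegeneracy one may assume without loss of generality (otherwise reduce $K$). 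Hence $\Psi$ is concave; finding a zero of $\nabla\Psi$ amounts to showing $\Psi$ attains a maximum, equivalently $\Psi$ is coercive ($\Psi(a)\to-\infty$ as $|a|\to\infty$). Coercivity is where the hypothesis $0\in\interior\Rot(\Phi)$ enters: one shows $\Psi(a)\ge \langle a,v\rangle + C$ type lower bounds would \emph{fail} to be coercive, so instead I would argue directly that $\Psi(a)\le \log\lambda_B + \sup_{\mu}\langle a,\rv(\mu)\rangle = \log\lambda_B + \sup_{v\in\Rot(\Phi)}\langle a,v\rangle$, and since $0$ is interior to $\Rot(\Phi)$ the support function $v\mapsto\sup\langle a,v\rangle$ grows linearly in every direction; combined with a matching lower bound and strict concavity this forces a unique interior critical point. (Alternatively, since $F=\nabla\Psi$ with $D F=\mathrm{Hess}\,\Psi\prec 0$, $F$ is a strictly monotone map, and the interiority hypothesis yields that $F$ is onto, in particular $0\in\mathrm{im}(F)$; a clean way is to check that $F$ maps $\mathbb{R}^K$ onto $\interior\Rot(\Phi)$, which is the prescription result Theorem \ref{theo:prescribing} applied at the value $0$.)

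Indeed the slickest route is to simply invoke Theorem \ref{theo:prescribing}: since $0\in\interior\Rot(\Phi)$, there is a (unique) $a^\ast$ such that $\rv(\mu_{B_{a^\ast}})=0$, i.e. $\mu_{B_{a^\ast}}\in\spacem{P}_T[\Phi]$, and then the sandwich inequality above closes the proof, with uniqueness inherited from the uniqueness in Theorems \ref{theomain:pressure} and \ref{theo:prescribing}. Finally, to get the sharp statement that ``$s=1$'' I would observe that in the Kucherenko–Wolf formulation the multiplier in front of the entropy term is exactly the coefficient one would obtain by also varying a scaling parameter; here the Lagrangian is built from $P_B=\entropy+\int B\,\dd\mu$ with entropy coefficient fixed at $1$ from the outset, and the above shows no rescaling is needed, so the equilibrium state is genuinely of the form $B+\sum a_k\varphi_k$ with the entropy weight equal to one. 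I expect the main obstacle to be the coercivity/surjectivity step — making rigorous that $0\in\interior\Rot(\Phi)$ forces the gradient map $F$ to hit $0$ — but this is precisely what Theorem \ref{theo:prescribing} packages, so modulo citing it the remaining work is the bookkeeping of the sandwich inequality and the uniqueness claims.
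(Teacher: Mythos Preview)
Your final route --- invoke Theorem \ref{theo:prescribing} to obtain the unique $a^\ast$ with $\rv(\mu_{B_{a^\ast}})=0$, observe that $P_B=P_{B_{a^\ast}}$ on $\spacem{P}_T[\Phi]$, and then apply Theorem \ref{theomain:pressure} to $P_{B_{a^\ast}}$ --- is exactly the paper's proof (Theorem \ref{theo:constraints}), essentially verbatim.

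One correction to the exploratory middle paragraph you yourself set aside: the sign is backwards. By Theorem \ref{theomain:metric} the Hessian of $\log\Lambda$ is the variance form, which is \emph{positive} semi-definite, so $\Psi(a)=\log\lambda_{B_a}$ is \emph{convex}, not concave; the critical point you want is a \emph{minimum}, and coercivity should read $\Psi(a)\to+\infty$ as $|a|\to\infty$. That does follow from $0\in\interior\Rot(\Phi)$, via the lower bound $\Psi(a)\ge \entropy(\mu)+\int B\,\dd\mu+\langle a,\rv(\mu)\rangle$ for any fixed $\mu$ with $\rv(\mu)$ chosen in the direction of $a$. This alternative argument can be made to work, but since you end up citing Theorem \ref{theo:prescribing} anyway (which already packages exactly this surjectivity), the detour is unnecessary.
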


We also recover Theorem B in \cite{KW1} (the supremum of entropy of measures
realizing
a given vector in the interior of $\Rot(\Phi)$ depends analytically on the
vector,
Corollary  \ref{coro:KW1}), and by nature our method could be applied to
more general constraints (e.g. asking that $\rv(\mu)$ belongs to
some submanifold of $\Rot(\Phi)$).

Theorem \ref{theomain:constraints} notably
shows that when $T$ is the shift map
and the test functions and the potential $B$ all depend only on $n$
coordinates, so does the potential of the constrained equilibrium state,
which is thus a $(n-1)$-Markovian measure (Remark \ref{rema:constraints},
which also follows from the results of \cite{KW2} but is not stated there).

This result is precise enough to
yield explicit solutions to some concrete maximizing questions,
which as far as we know would be difficult to solve without it.
Let us give a toy example which turns out to have a
nontrivial answer.

\begin{exemain}\label{exemain:constraints}
Assume $T$ is the shift map on $\Omega=\{0,1\}^{\mathbb{N}}$.
Among shift-invariant measures $\mu$ such that
$\mu(01*) = 2\mu(11*)$, the Markov measure associated
to the transition probabilities
\begin{align*}
\mathbb{P}(0\to 0) &= 1-a & \mathbb{P}(0\to1) &= a \\
\mathbb{P}(1\to0) &= \frac23 & \mathbb{P}(1\to1) &= \frac13
\end{align*}
where $a$ is the only real solution to
\[(1-a)^5=\frac{4}{27} a^2 \qquad (a\simeq 0.487803)\]
uniquely maximizes entropy.
\end{exemain}

As a final remark, we mention that optimization
problems such as we solve in  Theorem \ref{theomain:constraints}
appear naturally in multifractal analysis, see
\cite{Bar2}, \cite{Bar1}, \cite{Climenhaga}.
Our approach might lead to explicit computation in that
field.

\section{Notation and preliminaries}\label{sec:notations}

We shall consider the thermodynamical formalism associated with
a discrete-time, con\-ti\-nuous-space dynamical system.
The phase space shall be
denoted by $\Omega$, and will be assumed to be a \emph{compact}
metric space, whose metric shall be denoted by $d$. The time
evolution is then described by a map $T:\Omega\to \Omega$
which will be assumed to be a finite-to-one map.
We will denote by $\spacem{P}(\Omega)$ the set of probability measures
on $\Omega$, and by $\spacem{P}_T(\Omega)$ the subset of
$T$-invariant measures.

Typical cases to serve as examples are the shift on
$\mathcal{A}^{\mathbb{N}}$ where $\mathcal{A}$ is a finite alphabet,
the maps $x\mapsto dx\mod 1$ acting on the circle
$\mathbb{S}^1=\mathbb{R}/\mathbb{Z}$. The reader not willing
to deal with the detailed hypotheses below can
assume $T$ is one of these classical maps.

\begin{rema}
We also want to consider cases such as the tent map
\[x\mapsto \begin{cases} 2x &\mbox{ if }x\le \frac{1}{2}\\
  2-2x &\mbox{ if }x\ge \frac{1}{2} \end{cases}\]
on the interval $[0,1]$.
This map has a particularity shared with other
map of the same kind: one point, $1/2$, has only
one inverse image while its neighboring points have two.
This will make it necessary to adjust some of the definitions
below. Let us formalize a property of the tent map which
we will refer to when we explain these modifications:
the tent map \emph{has local inverse branches} in the sense that
for all $x\in\Omega$ there is an integer $d\ge 2$
(to be implicitly taken minimal),
a neighborhood $V$ of $x$ and continuous maps
$y_k : V_k\subset \Omega \to V$ (where $k\in\{1,\dots, d\}$) such
that for all $x'\in V$ we have
\[T^{-1}(x') = \{y_1(x'),\dots,y_d(x')\}.\]
\end{rema}

\subsection{Working Hypotheses}

\subsubsection{Space of potentials}

The first set of assumptions we make concerns the regularity
of potentials; in designing the hypotheses below we tried to
keep them general enough not to rule out non continuous
potentials; e.g. in some settings bounded variation functions
are meaningful (in particular when $T$ is only piecewise continuous).

We fix for all the article a space of functions
$\fspace{X}(\Omega)$, endowed with a norm $\lVert\cdot\rVert$,
satisfying the following.
\begin{hypo}
$\fspace{X}(\Omega)$ is a Banach space of
Borel-measurable, bounded functions $\Omega\to\mathbb{R}$, which includes
all constant functions; for all $f,g \in\fspace{X}(\Omega)$
we have
\[\lVert fg\rVert \le \lVert f\rVert \ \lVert g \rVert,\]
for all $f\in\fspace{X}(\Omega)$ that is positive and bounded away
from $0$, the function $\log f$ also lies in $\fspace{X}(\Omega)$;
and for some constant $C$ it holds
\[\lVert f \rVert \ge C\sup_{x\in\Omega} |f(x)|.\]
\label{hypo:Banach}
\end{hypo}

In particular for each probability measure
$\mu$ on $\Omega$,
the linear form defined by $f\mapsto \int f\,\dd \mu$ is
continuous: in other words, every probability measure
can be seen as an element of $\fspace{X}(\Omega)^*$.

Note that when $f\in\fspace{X}(\Omega)$ is positive
and bounded away from $0$, $1/f=e^{-\log f}$ is
also in $\fspace{X}(\Omega)$.

\begin{rema}
In some circumstances, one works with a norm
satisfying only the weak multiplicativity condition
$\lVert fg\rVert \le C \lVert f\rVert \ \lVert g \rVert$
for some positive constant $C$. Then one can define
a new, equivalent norm $\lVert\cdot\rVert'=C\lVert\cdot\rVert$
which is then multiplicative.
\end{rema}

\begin{exem}
The space
$\Holder_\alpha(\Omega)$ of $\alpha$-H\"older functions
(for some $\alpha\in(0,1]$) with its usual norm
\[ \lVert f \rVert_{\mathrm{\alpha}} = \sup_{x\in \Omega} |f(x)|
  + \sup_{x\neq y\in\Omega} \frac{|f(x)-f(y)|}{d(x,y)^\alpha}\]
satisfies \ref{hypo:Banach}.
When $\alpha=1$, we get the space $\mathrm{Lip}(\Omega)$
of Lipschitz-continuous functions. Note that $d^\alpha$ is a distance
on $\Omega$, and that $\Holder_\alpha(\Omega)$ coincide with
$\mathrm{Lip}(\Omega,d^\alpha)$.
\end{exem}


Next, we need a compatibility hypothesis between $T$ and
$\fspace{X}(\Omega)$.

\begin{hypo}
$T$ \emph{preserves $\fspace{X}(\Omega)$ forward and backward},
i.e. the composition operator $f\mapsto f\circ T$ is well-defined
and continuous from $\fspace{X}(\Omega)$ to itself, and
or all $f\in\fspace{X}(\Omega)$, we have
   \[g:x\mapsto\sum_{T(y)=x} f(y) \in\fspace{X}(\Omega) \quad
   \mbox{and}\quad \lVert g\rVert \le C\lVert f\rVert \]
for some constant $C$ (i.e. $f\mapsto g$ is a continuous operator
on $\fspace{X}(\Omega)$).
\label{hypo:preserves}
\end{hypo}

\begin{exem}
When $\fspace{X}(\Omega)=\Holder_\alpha(\Omega)$,
it is sufficient to ask the map $T$ to be a local
bi-Lipschitz homeomorphism to obtain \ref{hypo:preserves}.
\end{exem}

\begin{rema}
The tent map does not strictly
speaking satisfy this compatibility when
for example $\fspace{X}(\Omega)=\Holder_\alpha(\Omega)$,
because $\frac{1}{2}$ only
has one inverse image and $g$ is usually not even continuous.
One can fix such cases by
introducing a suitable weight in all sums $\sum_{T(y)=x} f(y)$,
i.e. $\sum_{T(y)=\frac{1}{2}} f(y)$ should be interpreted as  $f(1)+f(1)$
to ensure continuity in $x$ of $\sum_{T(y)=x} f(y)$. In other
words, if needed $\sum_{T(y)=x} f(y)$ can be replaced everywhere by
$\sum_k f(y_k(x))$ where $y_k$ are the local inverse branches of $T$.
\end{rema}

\subsubsection{Transfer operator}

 The composition operator arising from $T$ is the natural functional counterpart to our dynamical
system; in fact, most properties of ergodic flavor of $T$ are naturally
formulated in terms of the composition operator on a certain class
of functions. However it is useful to investigate its ``inverses'',
the transfer operators.
Given a ``potential'' $A\in\fspace{X}(\Omega)$, one defines
a transfer operator (also called a Ruelle operator) by
\[\op{L}_A(f)(x) = \sum_{T(y)=x} e^{A(y)} f(y);\]
note that since $\fspace{X}(\Omega)$ is a Banach algebra,
$e^A$ lies in $\fspace{X}(\Omega)$ and so does $\op{L}_A(f)$;
hypothesis \ref{hypo:preserves} also implies that $\op{L}_A$ is a
continuous operator.

Since $\fspace{X}(\Omega)$ is a space of functions,
it contains a canonical ``positive cone'', the set of positive
functions, which is convex and invariant by dilation.
By design, the transfer operator is positive
in the sense that it maps the positive cone into itself.
Typical expanding assumptions for $T$ ensure that the positive cone
is even mapped into a narrower cone, inducing a contraction
on the set of positive directions endowed with a suitable
distance (see e.g. \cite{Bal}).
Instead of assuming such kind of hypothesis on
$T$, we shall only assume the consequences that are usually drawn from
them. Namely, we ask that $(T,\fspace{X}(\Omega))$
\emph{satisfies a Ruelle-Perron-Frobenius theorem} (including a
spectral gap) in the sense of the following two hypotheses.

\begin{hypo}
For all $A\in \fspace{X}(\Omega)$ the transfer operator
$\op{L}_A$ has a positive maximal eigenvalue $\lambda_A$ and a positive,
bounded away from $0$ eigenfunction $h_A\in\fspace{X}(\Omega)$:
\[\op{L}_A(h_A) = \lambda_A h_A,\]
and the dual operator $\op{L}_A^\ast$ of $\op{L}_A$
preserves the set of finite measures and has a eigenmeasure
$\nu_A\in \spacem{P}(\Omega)$ for the eigenvalue $\lambda_A$, in particular
  \[\int \op{L}_A(f) \,\dd\nu_A = \lambda_A \int f \,\dd\nu_A \qquad
   \forall f\in\fspace{X}(\Omega)\]
\label{hypo:RPFa}
\end{hypo}
Observe that when all functions of $\fspace{X}(\Omega)$
are continuous, $\op{L}_A$ extends to all continuous functions and then
the dual operator automatically acts on measures.
\begin{hypo}
For all $A\in\fspace{X}(\Omega)$, there are positive constants
  $D$, $\delta$ such that
  for all $n\in\mathbb{N}$ and all $f\in\fspace{X}(\Omega)$
   such that $\int f \,\dd\nu_A = 0$, we have
  \[ \lVert \op{L}_A^n(f) \rVert \le
    D \lambda_A^n(1-\delta)^n \lVert f\rVert.\]
\label{hypo:RPFc}
\end{hypo}
It follows in particular that $\lambda_A$ is a simple eigenvalue and
that $\nu_A$ defines a natural (topological) complement to its
eigendirection.

It is easy to see that $\mu_A = h_A \,\nu_A$ defines an invariant measure for $T$, and up to normalizing $h_A$
we can assume $\mu_A$ is a probability measure which we will
call the \emph{Gibbs measure} of $A$.

\begin{exem}
When $T$ is expanding in a relatively general sense
and $\fspace{X}(\Omega)$ is a space of H\"older functions,
\ref{hypo:RPFa} and \ref{hypo:RPFc} are proved in
\cite{KLS} (the spectral gap is
proved there for normalized potentials only, but see remark \ref{rema:gap}).
\end{exem}

\subsubsection{Further hypotheses}

Our first results will only use \ref{hypo:Banach} to \ref{hypo:RPFc},
but at some point we will need two further hypotheses, which feel
harmless (in the sense that they hold for most if not all relevant
examples), but which do not follow from the previous ones.

From Section \ref{sec:metric} on, we will assume:
\begin{hypo}
For all $A,f\in\fspace{X}(\Omega)$, if $f$ is non-negative
and $\int f \,\dd\mu_A=0$ then $f=0$.
\label{hypo:support}
\end{hypo}

\begin{rema}
If all functions in $\fspace{X}(\Omega)$ are continuous,
it is sufficient to ask that $\mu_A$ has full support for all $A$
to ensure \ref{hypo:support}.
\end{rema}

\begin{exem}
Assume that $T$ is continuous, that all functions in $\fspace{X}(\Omega)$
are continuous, and that the only closed subsets $A\subset\Omega$
which are both forward and backward invariant (i.e. $T(A)=T^{-1}(A)= A$)
are the empty set $\varnothing$ and the full space $\Omega$. Then
\ref{hypo:support} holds.

Indeed, since $\mu_A$ is an invariant measure, its support is a closed
invariant subset of $\Omega$. But (assuming without lost of
generality that $A$ is normalized, see below)
the invariance under $\op{L}_A^\ast$
and the fact that $e^A$ is a positive function also implies that
$\supp\mu_A$ is backward invariant, so that $\mu_A$ must have
full support. The continuity of $f$ then gives the conclusion.
\end{exem}

In Section \ref{sec:applications} we will
use the following largeness hypothesis, meant to avoid
degenerate cases such as $\fspace{X}(\Omega)=\{\mbox{constants}\}$.
\begin{hypo}
All continuous functions $f:\Omega\to \mathbb{R}$ can
be uniformly approximated by elements of $\fspace{X}(\Omega)$.
\label{hypo:large}
\end{hypo}
(Note that we do not imply here that the functions in $\fspace{X}(\Omega)$
are continuous themselves.)

\subsection{Normalization} \label{subsec:normalization}

Among the potentials, of particular importance are the \emph{normalized}
ones, i.e. those potentials $A$ such that
$\op{L}_A(\mathbf{1}) = \mathbf{1}$
(where $\mathbf{1}$ denotes the constant function with value $1$)
i.e. such that $\lambda_A=1$ and $h_A=\mathbf{1}$.
In other words, $A$ is normalized when
\begin{equation}
\sum_{T(y)=x} e^A(y) = 1 \quad \forall x\in\Omega.
\label{eq:normalized}
\end{equation}
Two nice properties that give a first evidence for the relevance
this definition  are
that when $A$ is normalized, first $\op{L}_A$ is a left-inverse
to the composition operator:
\[\op{L}_A(f\circ T) = f \quad \forall f\in\fspace{X}(\Omega),\]
second $\op{L}_A^\ast$  preserves the set of probability measures.
One can then interpret $\op{L}_A^*$ as a Markov chain, the
numbers $e^{A(y)}$ representing the probability of transiting
from $x$ to $y$ whenever $T(y)=x$; a realization of this Markov chain
is a random reverse orbit of $T$.

As is well-known, the Ruelle-Perron-Frobenius theorem enables
one to ``normalize'' a potential $A$, by writing
\[B=A+\log h_A - \log h_A\circ T - \log\lambda_A \in\fspace{X}(\Omega).\]
Then one gets
\begin{align*}
\op{L}_B(f)& : x  \mapsto \sum_{T(y)=x} e^{A(y)}
               \frac{h_A(y)}{\lambda_A h_A(x)} f(y) \\
  \op{L}_B(f) & =  \frac{1}{\lambda_A h_A} \op{L}_A(h_A f)
\end{align*}
where \ref{hypo:RPFa}
ensures that $h_A$ is bounded away from $0$ and
\ref{hypo:Banach} then ensures that $1/h_A\in \fspace{X}(\Omega)$.
The transfer operators $\op{L}_A$ and $\op{L}_B$ are thus
conjugated one to another
up to a multiplicative constant $\lambda_A$, the conjugating operator
being the multiplication by $h_A$; in particular
\[\op{L}_B(\mathbf{1}) =\frac{1}{\lambda_Ah_A} \op{L}_A(h_A)
  = \mathbf{1}.\]

This conjugacy shows that
the Gibbs measure $\mu_A = h_A \,\nu_A$
is also the eigenprobability $\nu_B=\mu_B$ of $\op{L}_B^*$; each potential yields an invariant probability measure,
but several potentials can yield the same Gibbs measure.

Using the same computation than above, one sees that
whenever two arbitrary potentials $A$, $B$ are related
as above, i.e. $B= A + g -g\circ T + c$ for some
$g\in\fspace{X}(\Omega)$ and $c\in\mathbb{R}$, then
their transfer operators and their duals are conjugated one to another
up to a constant:
\[\op{L}_B(\cdot)=e^c e^{-g} \op{L}_A(e^g \cdot)
\qquad \op{L}_B^\ast(\cdot) = e^c e^g \op{L}_A^\ast(e^{-g}\cdot)\]
where $e^g$ is in $\fspace{X}(\Omega)$, positive and bounded
away from $0$.
It follows immediately that (up to normalizing constants)
$h_B=h_Ae^{-g}$, $\nu_B=e^g \nu_A$ and $\lambda_B=e^c \lambda_A$.
In particular we have $\mu_B=\mu_A$: both potentials define the same
Gibbs measure. It is also straightforward to check that
if moreover both $A$ and $B$ are normalized, $g$ must be a constant
and $c$ must be zero, so that $A=B$. In other words, we have a subspace
\[\fspace{C} = \big\{g-g\circ T + c \mid g\in\fspace{X}(\Omega), c \mbox{ a constant} \big\} \subset \fspace{X}(\Omega)\]
 such that each class modulo $\fspace{C}$
defines one Gibbs measure, and contains exactly one normalized potential.
One says that a function of the form $g-g\circ T$ is a \emph{coboundary},
thus $\fspace{C}$ is the space generated by coboundaries and constants.
All the above is very classical; our goal is now to study in
more details the following objects:
\begin{itemize}
\item the set $\fspace{N}\subset \fspace{X}(\Omega)$ of normalized potentials (which is not a linear subspace, see Remark \ref{rema:straight}),
\item the \emph{normalization map}
 \[N:A\mapsto A+\log h_A-\log h_A\circ T-\log\lambda_A\]
from $\fspace{X}(\Omega)$ to $\fspace{N}$,
\item the quotient $\fspace{Q}=\fspace{X}(\Omega)/\fspace{C}$, and
\item the Gibbs map $G:A\mapsto \mu_A$ from $\fspace{X}(\Omega)$, seen
as taking value either in $\fspace{X}(\Omega)^*$ or in
$\spacem{P}_T(\Omega)\subset \spacem{P}(\Omega)$
\end{itemize}

The typical questions we want
to answer are of differential-geometric flavor: is $\fspace{N}$
a submanifold of $\fspace{X}(\Omega)$? Are the maps
$N$ and $G$ differentiable? How to endow $\fspace{N}$
or $\fspace{Q}$ with a meaningful Riemannian
metric? Can we then study gradient flows of natural functionals on
these spaces?

\begin{rema}\label{rema:gap}
The conjugacy between the transfer operator of a potential
$A$ and the transfer operator of its normalization $B=N(A)$
shows that a spectral gap for $\op{L}_B$ implies
the same spectral gap for $\op{L}_A$ (with a different constant
$D$, but the same $\delta$). Indeed, if $\int f\,\dd\nu_A=0$
then $\int f/h_A \,\dd\mu_A =0$ and
\begin{align*}
\lVert \op{L}_A^n(f) \rVert
  &= \lVert \lambda_A^n h_A \op{L}_B^n\big(f/h_A\big) \rVert \\
  &\le \lambda_A^n \lVert h_A\rVert D (1-\delta)^n \lVert f/ h_A\rVert \\
  &\le \big(D \lVert h_A\rVert \lVert 1/h_A\rVert\big)\, \lambda_A^n (1-\delta)^n \lVert f\rVert.
\end{align*}
In particular, if hypothesis \ref{hypo:RPFa}
is satisfied, the spectral gap for normalized potentials
implies \ref{hypo:RPFc} for all potentials.
\end{rema}

\begin{rema}
The spectral gap hypothesis implies
an exponential decay of correlation for functions
in $\fspace{X}(\Omega)$: indeed if $A$ is any potential and
$f,g\in\fspace{X}(\Omega)$ are such that $\int f\dd\mu_A=0$, we have
\begin{align*}
\Big| \int f \cdot g\circ T^n \,\dd\mu_A \Big|
  &= \Big| \int \op{L}_{N(A)}^n(f\cdot g\circ T^n) \,\dd\mu_A \Big| \\
  &= \Big| \int \op{L}_{N(A)}^n(f) \cdot g \,\dd\mu_A \Big| \\
  &\le \lVert \op{L}_{N(A)}^n(f) \cdot g \rVert_\infty \\
  &\le C^{-2} \lVert \op{L}_{N(A)}^n(f) \rVert \lVert g\rVert \\
  &\le C^{-2} D (1-\delta)^n \lVert f\rVert \lVert g\rVert
\end{align*}
\end{rema}

\begin{rema} \label{rema:gibbstandard}
In typical situations, a normalized potential $A$ can be recovered
from the Gibbs measure as a Jacobian: for example, if $T$ has inverse
branches $y_i$ near each $x\in\Omega$ which are local homeomorphisms
with disjoint images, then
\[e^{A(y_i(x))} =
\frac{\dd\mu_A(y_i(x))}{\dd\mu_A(x)}\]
in the sense that if $B=B(x,\varepsilon)$ is a small ball around
$x$, the ratio of $\mu_A(y_i(B))$ with respect to $\mu_A(B)$ goes to $e^{A(y_i(x))}$
when $\varepsilon$ goes to zero (in doubt, integrate caracteristic functions of balls with respect to one measure and change variables to verify such claim). Slight adaptations of this
argument are needed for example for tent maps.

In general, it might \emph{a priori} happen that two different normalized
potentials $A,A'$ have the same Gibbs measure. We will see much later in
Remark \ref{rema:gibbsuniqueness} that our assumptions are sufficient
to prevent this, and
ensure perfect identifications between normalized potentials,
mod $\fspace{C}$ classes of potentials, and Gibbs measures.
\end{rema}

\subsection{Analytic maps and submanifolds} \label{sec:imp}

When working in infinite-dimensional spaces,
just as differentiability has various definitions of varying strength
(G\^ateaux versus Fréchet), the analyticity of a map
can be defined in several ways. Here, we take the
strongest definition, recalled below.

First, recall that a closed linear subspace $M$ in a Banach
space $\fspace{X}$ is said to be topologically complemented, or
for short \emph{complemented}, when there is a closed linear subspace $N$
which is an algebraic complement. We shall only write
$\fspace{X}=M\oplus N$ when $M$ and $N$ are topological complements.
The projection to $M$ along $N$ and
the projection to $N$ along $M$ are then continuous, i.e.
for all $x\in\fspace{X}$,
the decomposition $x=m+n$ with $m\in M$ and $n\in N$ exists,
is unique, and $m$ and $n$ depend linearly continuously on $x$.
Equivalently, $M$ is complemented when it is the image, or the
kernel, of a continuous linear projection $\fspace{X}\to\fspace{X}$.

Let $\fspace{X}$ and $\fspace{Y}$ be two Banach spaces,
whose norms will both be denoted by $\lVert\cdot\rVert$.
A continuous, symmetric, multilinear operator
$a: \fspace{X}^k \to \fspace{Y}$ has an operator
norm denoted by $|a|$; if $\zeta$ is a vector
in $\fspace{X}$, we denote by $\zeta^{(k)}$
the element $(\zeta,\zeta,\dots,\zeta)$ of $\fspace{X}^k$
and we have
\[\lVert a(\zeta^{(k)})\rVert\le |a| \lVert\zeta\rVert^k.\]
We shall say that a sequence
$a_k:\fspace{X}^k \to \fspace{Y}$ of such $k$-ary operators
($k\ge 0$) is a series with positive radius of convergence
if the complex series
\[\sum_{k\ge 0} |a_k|z^k \]
has a positive radius of convergence in $\mathbb{C}$.

Let $\Phi:U\subset\fspace{X}\to\fspace{Y}$ be a map defined
from an open subset of $\fspace{X}$. We say that $\Phi$
is \emph{analytic} if for each $x\in U$ there is a series
of $k$-linear, symmetric, continuous operators
$a_k:\fspace{X}^k \to \fspace{Y}$ with positive
radius of convergence such that on an open subset of $U$
the following identity holds:
\[\Phi(x+\zeta) = \sum_{k\ge 0} a_k(\zeta^{(k)}).\]
An analytic map is smooth (in particular, Fr\'echet differentiable
and locally Lipschitz-continuous) and the operators $a_k$
are uniquely defined by $\Phi$. Most classical results
hold in this context, in particular the inverse function
theorem and the implicit function theorem  (see \cite{Chae} and \cite{Whi}).

More precisely, a map $\Phi:U\subset\fspace{X}\to\fspace{Y}$
which is analytic and such that $D\Phi_x:\fspace{X}\to\fspace{Y}$
is a topological isomorphism for each $x$, has a local reciprocal
near each point, which is itself analytic (inverse function theorem);
if a map $F:U\subset\fspace{X}\to\fspace{Y}$ is such that
$F(x)=0$ for some $x\in U$, $DF_x$ is onto $\fspace{Y}$
and $\ker DF_x$ is complemented in $\fspace{X}$,
then the level set $F^{-1}(0)$ is an analytic submanifold of
$\fspace{X}$ in a neighborhood of $x$ (implicit function theorem).
In particular, this means that there is an analytic
diffeomorphism defined in a neighborhood of $x$ that maps
$F^{-1}(0)$ to (an open set of)
a complemented, closed, linear subspace of $\fspace{X}$;
it also means that $F^{-1}(0)$ can be locally written
as the graph of an analytic map over a complemented subspace.

\section{Normalizing potentials}
\label{sec:normalizing}

We will now consider the set of normalized potentials
\[\fspace{N} = \{A\in\fspace{X} \mid \op{L}_A(\mathbf{1})=\mathbf{1} \}\]
and the normalization map $N$ that sends any potential
$A$ to its normalization:
\[N(A)=A-\log\lambda_A + \log h_A -\log h_A\circ T.\]
The map $N$ can be described as the (non-linear) projection on $\fspace{N}$
along
\[\fspace{C}= \big\{g-g\circ T + c \,\big|\, g\in\fspace{X}(\Omega), c \mbox{ a constant} \big\}.\]

We start by a simple Lemma which will both prove useful and
serve as an example of the use of convergent series
in our study. We shall denote by $\ker\mu_A\subset \fspace{X}(\Omega)$
the kernel of $\mu_A$ seen as a linear form, i.e.
\[\ker\mu_A:= \Big\{f\in\fspace{X}(\Omega) \,\Big|\,
  \int f \,\dd\mu_A = 0 \Big\}.\]

\begin{lemm}
If $A$ is a normalized potential, the operator
$I-\op{L}_A$ is onto $\ker\mu_A$, and
its corestriction
to $\ker \mu_A$ has a continuous inverse given by
\[(I-\op{L}_A)^{-1} = \sum_{k=0}^\infty \op{L}_A^k
 \quad :  \quad\ker\mu_A\to \fspace{X}(\Omega).\]
\end{lemm}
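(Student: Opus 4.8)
The plan is to use the spectral gap hypothesis \ref{hypo:RPFc} to show the Neumann series $\sum_{k\ge 0}\op{L}_A^k$ converges on $\ker\mu_A$, and then identify its sum as the inverse of $I-\op{L}_A$ restricted to that subspace. First I would observe that since $A$ is normalized, we have $\lambda_A=1$, $h_A=\mathbf{1}$, and $\nu_A=\mu_A$, so the hypothesis \ref{hypo:RPFc} reads: there are constants $D,\delta>0$ with $\lVert\op{L}_A^n(f)\rVert\le D(1-\delta)^n\lVert f\rVert$ for all $n$ and all $f\in\ker\mu_A$. I should first check that $\ker\mu_A$ is preserved by $\op{L}_A$: if $\int f\,\dd\mu_A=0$ then $\int\op{L}_A(f)\,\dd\mu_A=\int\op{L}_A(f)\,\dd\nu_A=\lambda_A\int f\,\dd\nu_A=0$ using that $\op{L}_A^\ast\nu_A=\lambda_A\nu_A=\nu_A$. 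So each $\op{L}_A^k$ maps $\ker\mu_A$ to itself, and the estimate applies termwise.

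Next I would show the series converges in operator norm on $\ker\mu_A$. The partial sums $S_n=\sum_{k=0}^{n}\op{L}_A^k$ form a Cauchy sequence of continuous operators $\ker\mu_A\to\fspace{X}(\Omega)$, since for $m<n$,
\[\Bigl\lVert\sum_{k=m+1}^{n}\op{L}_A^k(f)\Bigr\rVert\le\sum_{k=m+1}^{n}D(1-\delta)^k\lVert f\rVert\le \frac{D(1-\delta)^{m+1}}{\delta}\lVert f\rVert,\]
which tends to $0$ as $m\to\infty$, uniformly over the unit ball of $\ker\mu_A$. Since $\ker\mu_A$ is closed in the Banach space $\fspace{X}(\Omega)$ (it is the kernel of the continuous linear form $\mu_A$, continuity coming from \ref{hypo:Banach}), the space of continuous operators from $\ker\mu_A$ to $\fspace{X}(\Omega)$ is complete, so $S:=\sum_{k=0}^\infty\op{L}_A^k$ exists as a continuous operator. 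Moreover the same estimate with $m=-1$ gives $\lVert S\rVert\le D/\delta$.

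Finally I would verify the algebraic identity $(I-\op{L}_A)S=\Id$ on $\ker\mu_A$ and $S(I-\op{L}_A)=\Id$ on $\ker\mu_A$, which follows by telescoping: $(I-\op{L}_A)S_n=I-\op{L}_A^{n+1}$, and passing to the limit, $\op{L}_A^{n+1}(f)\to 0$ for $f\in\ker\mu_A$ by \ref{hypo:RPFc}, so $(I-\op{L}_A)S=I$; similarly $S_n(I-\op{L}_A)=I-\op{L}_A^{n+1}$ on $\ker\mu_A$ (using $\op{L}_A$-invariance of $\ker\mu_A$ so that the composition makes sense), giving $S(I-\op{L}_A)=I$. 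In particular $S$ maps into $\ker\mu_A$ — equivalently, $I-\op{L}_A$ maps $\ker\mu_A$ onto $\ker\mu_A$ — so the corestriction of $I-\op{L}_A$ to $\ker\mu_A$ is a topological isomorphism with inverse $S$. The statement as written only asserts ontoness of $\ker\mu_A$ plus that the corestriction has continuous inverse $S$, which is exactly what we have shown.

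I do not anticipate a serious obstacle here; this is essentially the standard Neumann series argument, and the only minor points requiring care are: confirming $\ker\mu_A$ is $\op{L}_A$-invariant (so the series lives in the right space and telescoping compositions make sense), confirming $\ker\mu_A$ is closed (so completeness of the operator space can be invoked), and being careful that the spectral gap estimate in \ref{hypo:RPFc} is stated with $\nu_A$ whereas for normalized $A$ we have $\nu_A=\mu_A$, so it applies verbatim to $\ker\mu_A$. The ``hard part'', such as it is, is purely bookkeeping: making sure the corestriction statement is read correctly so that the conclusion $S(\ker\mu_A)\subset\ker\mu_A$ is actually established rather than assumed.
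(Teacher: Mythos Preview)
Your proof is correct and follows essentially the same route as the paper: both use the spectral gap \ref{hypo:RPFc} (with $\lambda_A=1$ and $\nu_A=\mu_A$ for normalized $A$) to make the Neumann series $\sum_{k\ge0}\op{L}_A^k$ converge on $\ker\mu_A$, and both verify the inverse relation via the telescoping identity $(I-\op{L}_A)\sum_{k=0}^n\op{L}_A^k=I-\op{L}_A^{n+1}$. Your version is slightly more careful in explicitly recording that $\ker\mu_A$ is closed and $\op{L}_A$-invariant, but the argument is the same.
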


\begin{proof}
For all $f\in\fspace{X}(\Omega)$, we have that
\[\int (I-\op{L}_A)(f)\,\dd\mu_A=\int f\,\dd\mu_A
  -\int f \,\dd\big(\op{L}_A^\ast\mu_A\big) = 0,\]
because $\mu_A$ is fixed by $\op{L}_A^\ast$.
It follows that $I-\op{L}_A$ takes its values in $\ker\mu_A$.

For all $f\in\ker\mu_A$ and all $n$, we have
\[(I-\op{L}_A)\bigg(\sum_{k=0}^n \op{L}_A^k(f)\bigg)
  = f - \op{L}_A^{n+1}(f).\]
By the spectral gap assumption, we obtain that
$\sum \op{L}_A^k(f)$ converges, that it is bounded by
 $\frac{D}{\delta}\lVert f\rVert$, and that $\op{L}_A^{n+1}(f)$
goes to $0$ when $n\to\infty$. We deduce that
$\sum_{k=0}^\infty \op{L}_A^k$ is well-defined and a right-inverse
to $I-\op{L}_A$, which is therefore onto $\ker\mu_A$.

By commutation the above shows that, for all $f\in\fspace{X}(\Omega)$,
\[\sum_{k=0}^n \op{L}_A^k\big((I-\op{L}_A(f)\big)\]
converges to $f$, so that we have defined an inverse to
(a corestriction of) $I-\op{L}_A$.
\end{proof}

This has useful consequences, which will be better phrased by
introducing another operator related to $A$.
\begin{defi}
Given any \emph{normalized} potential $A$,
let $\op{M}_A$ be the continuous linear operator on $\fspace{X}(\Omega)$
defined by
\[\op{M}_A(f) = -(I-\op{L}_A)^{-1}\circ\op{L}_A(f_A)
  = -\sum_{k=1}^\infty \op{L}_A^k(f_A),\]
where $f_A:=f-\int f\,\dd\mu_A$.
\end{defi}
Observe that $\op{L}_A$ maps $\ker\mu_A$ to itself,
so that $\op{M}_A$ is indeed well-defined
and takes its values in $\ker\mu_A$, and that $\op{M}_A$
commutes with $\op{L}_A$.

\begin{prop}\label{prop:LC}
Let $A$ be a normalized potential. Then:
\begin{enumerate}
\item \label{enumi:coro1} given $f\in\fspace{C}$, there is a
  unique decomposition $f=g-g\circ T+c$
  with $g\in\ker\mu_A$ and $c$ a constant, given by
  \[c=\int f \,\dd\mu_A \quad\mbox{and}\quad
    g=\op{M}_A(f),\]
\item \label{enumi:coro2} the subspace $\fspace{C}$ is closed in $\fspace{X}(\Omega)$, so
that $\fspace{Q}=\fspace{X}(\Omega)/\fspace{C}$ inherits a Banach space
structure from $\lVert \cdot \rVert$,
\item \label{enumi:coro3} $\ker\op{L}_A$ and
$\fspace{C}$ are (topological) complements in $\fspace{X}$,
\item \label{enumi:coro4} $\op{L}_A$ maps $\fspace{C}$ onto
 $\fspace{X}(\Omega)$.
\end{enumerate}
\end{prop}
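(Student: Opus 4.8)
The four assertions are really four facets of one computation: once I understand how $\op{L}_A$ and $I-\op{L}_A$ interact with $\fspace{C}$ and $\ker\mu_A$, everything drops out. The guiding picture is that a normalized potential $A$ gives the splitting $\fspace{X}(\Omega)=\mathbb{R}\mathbf{1}\oplus\ker\mu_A$ (since $\mu_A(\mathbf{1})=1$), that $\op{L}_A$ fixes $\mathbf{1}$ and maps $\ker\mu_A$ into itself contractingly (spectral gap, hypothesis \ref{hypo:RPFc}), and that the composition operator $f\mapsto f\circ T$ is a right inverse to $\op{L}_A$ on all of $\fspace{X}(\Omega)$ because $A$ is normalized. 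So I would carry the items out in the order \eqref{enumi:coro1}, \eqref{enumi:coro2}, \eqref{enumi:coro3}, \eqref{enumi:coro4}.

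For \eqref{enumi:coro1}: given $f=g_0-g_0\circ T+c_0\in\fspace{C}$, integrate against $\mu_A$; since $\mu_A$ is $T$-invariant the coboundary term vanishes, so $c_0=\int f\,\dd\mu_A=:c$ is forced. Thus $f-c=g_0-g_0\circ T$, and applying $\op{L}_A$ and using $\op{L}_A(g_0\circ T)=g_0$ gives $\op{L}_A(f-c)=\op{L}_A(g_0)-g_0$, i.e. $g_0-\op{L}_A(g_0)=-\op{L}_A(f-c)=-\op{L}_A(f_A)$ (note $f-c=f_A$). Hence $(I-\op{L}_A)(g_0)=-\op{L}_A(f_A)\in\ker\mu_A$; by the Lemma the solutions of this equation differ by an element of $\ker(I-\op{L}_A)$, which by the spectral gap is $\mathbb{R}\mathbf{1}$, so there is exactly one solution $g$ in $\ker\mu_A$, namely $g=(I-\op{L}_A)^{-1}(-\op{L}_A(f_A))=\op{M}_A(f)$ by the definition of $\op{M}_A$. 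That $g-g\circ T+c$ does return $f$ is the previous identity run backwards. Uniqueness of $(c,g)$ with $g\in\ker\mu_A$ is now immediate. This also gives a clean bijective continuous parametrization $\fspace{C}\cong\mathbb{R}\oplus\ker\mu_A$ via $f\mapsto(\int f\,\dd\mu_A,\op{M}_A(f))$, with continuous inverse $(c,g)\mapsto g-g\circ T+c$.

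For \eqref{enumi:coro2}: the inverse map just exhibited, $\Psi:\mathbb{R}\times\ker\mu_A\to\fspace{X}(\Omega)$, $(c,g)\mapsto c+g-g\circ T$, is continuous and linear with image $\fspace{C}$ and is injective by \eqref{enumi:coro1}; by \eqref{enumi:coro1} its left inverse $f\mapsto(\int f\,\dd\mu_A,\op{M}_A(f))$ is continuous on $\fspace{C}$, so $\Psi$ is a topological isomorphism onto its image and in particular $\fspace{C}=\Psi(\mathbb{R}\times\ker\mu_A)$ is complete, hence closed in $\fspace{X}(\Omega)$; the quotient $\fspace{Q}$ then carries the quotient norm. (Alternatively: $\fspace{C}=\ker\big(f\mapsto (I-\op{L}_A)\op{M}_A(f)+\int f\,\dd\mu_A\,\mathbf 1 - f\big)$ up to bookkeeping, but the isomorphism argument is cleanest.) For \eqref{enumi:coro3}: define $P:\fspace{X}(\Omega)\to\fspace{X}(\Omega)$ by $P(f)=f_A-\big(I-\op{L}_A\big)\op{M}_A(f)=f_A+\op{L}_A(I-\op{L}_A)^{-1}\op{L}_A(f_A)\cdots$ — more transparently, set $P(f)=f-\big(c+g-g\circ T\big)$ where $c=\int f\,\dd\mu_A$ and $g=\op{M}_A(f)$; this is continuous (composition of continuous operators), it is the identity modulo $\fspace{C}$, it kills $\fspace{C}$ by \eqref{enumi:coro1}, and one checks $\op{L}_A(P(f))=P(f)$, so $P(f)\in\ker(I-\op{L}_A)\cap\ker\mu_A$... wait — I should instead project onto $\ker\op{L}_A$, so the right operator is $Q(f):=f-\big(c+g-g\circ T\big)$ with $c,g$ chosen so that $Q(f)\in\ker\op{L}_A$: apply $\op{L}_A$ to $f-(c+g-g\circ T)$, demand it vanish, and solve for $(c,g)$; the constant drops ($\op{L}_A\mathbf 1=\mathbf 1\ne 0$, so actually one needs the coboundary to absorb $\op{L}_A f$ up to a constant). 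Concretely, $\op{L}_A(f)=\op{L}_A(f_A)+\int f\,\dd\mu_A\cdot\mathbf 1$, and $\op{L}_A(g-g\circ T)=\op{L}_A(g)-g$; choosing $g\in\ker\mu_A$ with $(I-\op{L}_A)g=\op{L}_A(f_A)$, i.e. $g=(I-\op{L}_A)^{-1}\op{L}_A(f_A)=-\op{M}_A(f)$, and $c=\int f\,\dd\mu_A$, makes $f-(c+g-g\circ T)\in\ker\op{L}_A$. So $\fspace{X}(\Omega)=\ker\op{L}_A+\fspace{C}$; the sum is direct because $\ker\op{L}_A\cap\fspace{C}=0$: if $h\in\fspace{C}$ then by \eqref{enumi:coro1} $h=g-g\circ T+c$ with $\op{L}_A(h)=\op{L}_A(g)-g+c\cdot\mathbf1$; if also $\op{L}_A(h)=0$ then $(I-\op{L}_A)g=c\cdot\mathbf1$, forcing $c=0$ (integrate) and then $g\in\ker(I-\op{L}_A)\cap\ker\mu_A=0$, so $h=0$. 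Since the associated projection is continuous, this is a topological direct sum. Finally \eqref{enumi:coro4}: $\op{L}_A$ restricted to $\fspace{C}$ is surjective onto $\fspace{X}(\Omega)$ — given any $f\in\fspace{X}(\Omega)$, write $f=f_A+\int f\,\dd\mu_A\cdot\mathbf 1$; since $\op{L}_A$ is onto $\ker\mu_A$ (Lemma) pick $u\in\fspace{X}(\Omega)$ with $\op{L}_A(u)=f_A$... better: the composition operator $C_T:g\mapsto g\circ T$ satisfies $\op{L}_A C_T=\Id$, so $\op{L}_A$ is already onto $\fspace{X}(\Omega)$ on all of $\fspace{X}(\Omega)$; to get it from $\fspace{C}$, note $C_T(f)-C_T(f)\circ T$... no — use instead: for $f\in\fspace{X}(\Omega)$, the element $-g+g\circ T$ with $g=-C_T(f)+\int f\,\dd\mu_A\cdot\mathbf1$ lies in $\fspace{C}$ and $\op{L}_A(g\circ T-g)=g-\op{L}_A(g)$; expanding with $\op{L}_A C_T f=f$ yields $\op{L}_A$ of this element equals $f-\int f\,\dd\mu_A\cdot\mathbf 1+\int f\,\dd\mu_A\cdot\mathbf 1\cdot(1-1)$... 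I will pin down the exact coboundary in the write-up, but the mechanism is: $\fspace{C}$ contains the coboundary $C_T(f)-C_T(f)\circ T^2$-type combinations and $\op{L}_A$ applied to $C_T(\psi)-C_T(\psi)\circ T$ is $\psi-C_T(\psi)$, so ranging $\psi$ over $\fspace{X}(\Omega)$ and using that $C_T$ is bounded shows the image is all of $\fspace{X}(\Omega)$ (surjectivity survives the closed-range argument since $\op{L}_A$ has closed range by the spectral gap).

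**Main obstacle.** The only delicate point is getting the algebra of \eqref{enumi:coro3} and \eqref{enumi:coro4} exactly right — i.e. producing, for a given $f$, the precise constant $c$ and the precise $g\in\ker\mu_A$ so that $f-(c+g-g\circ T)$ lands in $\ker\op{L}_A$ (resp. so that $\op{L}_A$ of some element of $\fspace{C}$ equals $f$), and then verifying that the resulting projection is continuous. Everything hinges on the two structural facts "$\op{L}_A C_T=\Id$ on $\fspace{X}(\Omega)$ when $A$ is normalized" and "$\ker(I-\op{L}_A)=\mathbb{R}\mathbf 1$ and $I-\op{L}_A\colon\ker\mu_A\to\ker\mu_A$ is invertible", both already available; assembling them is bookkeeping. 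Continuity of all projections is automatic since $\op{M}_A$, $C_T$, $f\mapsto\int f\,\dd\mu_A$ are continuous by \ref{hypo:preserves}, \ref{hypo:Banach} and the Lemma.
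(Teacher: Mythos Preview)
Your treatment of \ref{enumi:coro1}--\ref{enumi:coro3} is sound and essentially the same as the paper's: integrate against $\mu_A$ to pin down $c$, apply $\op{L}_A$ and invert $I-\op{L}_A$ on $\ker\mu_A$ to pin down $g$, then use continuity of $\op{M}_A$ and $f\mapsto\int f\,\dd\mu_A$ to get closedness and the topological splitting. One small slip in \ref{enumi:coro3}: requiring $\op{L}_A\big(f-(c+g-g\circ T)\big)=0$ gives $(I-\op{L}_A)g=c-\op{L}_A(f)=-\op{L}_A(f_A)$, not $+\op{L}_A(f_A)$, so the correct choice is $g=\op{M}_A(f)$, not $-\op{M}_A(f)$. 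This is cosmetic.

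Item \ref{enumi:coro4}, however, has a genuine gap. Your final mechanism ``$\op{L}_A$ applied to $C_T(\psi)-C_T(\psi)\circ T$ is $\psi-C_T(\psi)$'' is correct, but ranging $\psi$ over $\fspace{X}(\Omega)$ only shows that the image of $\op{L}_A|_{\fspace{C}}$ contains the set of coboundaries $\{\psi-\psi\circ T\}$; adding the constants (which $\op{L}_A$ fixes) only gets you back to $\fspace{C}$, not to all of $\fspace{X}(\Omega)$. The closed-range remark does not bridge this. The clean fix is already in your hands: once you have the splitting \ref{enumi:coro3}, $\op{L}_A(\fspace{X}(\Omega))=\op{L}_A(\ker\op{L}_A\oplus\fspace{C})=\op{L}_A(\fspace{C})$, and $\op{L}_A$ is onto $\fspace{X}(\Omega)$ because $\op{L}_A C_T=\Id$; hence $\op{L}_A(\fspace{C})=\fspace{X}(\Omega)$. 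Alternatively, and this is what the paper does, construct the preimage explicitly: given $f$, set $c=\int f\,\dd\mu_A$ and $g=(I-\op{L}_A)^{-1}(c-f)\in\ker\mu_A$; then $\op{L}_A(g-g\circ T+c)=\op{L}_A(g)-g+c=-(I-\op{L}_A)g+c=f$.
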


\begin{proof}
First observe that given any decomposition
$f=g-g\circ T+c$ and any $T$-invariant probability measure $\mu$, we have
\[\int f\,\dd\mu = \int g\,\dd\mu -\int g \,\dd(T_\#\mu) +c = c\]
where $T_\#\mu_A$ is the usual pushforward of the measure $\mu_A$ with
respect to $T$. Since $\mu_A$ is invariant, it follows
that $c$ must equal $\int f\,\dd\mu_A$ and is uniquely defined.

Let us then check that any
$f\in\fspace{C}\cap\ker\op{L}_A$ must vanish. First,
it is easy to see that $\ker\op{L}_A\subset\ker\mu_A$:
\[\int f \,\dd \mu_A = \int f \,\dd\big(\op{L}^*_A\mu_A\big)
  = \int \op{L}_A(f) \,\dd\mu_A = 0.\]
It follows that we can write $f=g-g\circ T$, so that
\[0=\op{L}_A(g-g\circ T) = \op{L}_A(g) - g\]
and $g$ is an eigenfunction of $\op{L}_A$ for the eigenvalue $1$.
Therefore $g$ is constant and $f=0$.

To prove \ref{enumi:coro1}, we write $f=g_1-g_1\circ T+c$ for some
$g_1$ and with $c=\int f\,\dd\mu_A$.
Setting $g=g_1-\int g_1\,\dd\mu_A$, we still have
$f=g-g \circ T +c$ and
\[\op{L}_A(f-c)=\op{L}_A(g-g\circ T)=\op{L}_A(g) - g\]
since $\op{L}_A$ is a left-inverse to the composition operator.
Now, from $g\in\ker\mu_A$ it follows
$g=-(I-\op{L}_A)^{-1}\op{L}_A(f-c)=\op{M}_A(f)$, as claimed.

To prove \ref{enumi:coro2}, consider a
sequence of functions $f_n\in\fspace{C}$ which converges to
$f\in\fspace{X}(\Omega)$. Then using \ref{enumi:coro1},
we can write $f_n=g_n-g_n\circ T + c_n$
where $g_n, c_n$ are images of $f_n$ by continuous operators.
In particular $g_n$ and $c_n$ have limits $g\in\fspace{X}(\Omega)$
and $c\in\mathbb{R}$, so that $f=g -g\circ T+c\in \fspace{C}$.

To prove \ref{enumi:coro3}, since we already know that
$\ker\op{L}_A$ and $\fspace{C}$ intersect trivially,
we consider any  $f\in\fspace{X}(\Omega)$ and let
$c:=\int f \,\dd \mu_A$ and $g=\op{M}_A(f)$.
We have
\[\op{L}_A(g-g\circ T+c) = \op{L}_A(g) - g +c\op{L}_A(\mathbf{1})
  = \op{L}_A(f-c)+c = \op{L}_A(f)\]
where the second equality follows from
$(\op{L}_A-I)\op{M}_A=\op{L}_A$ on $\ker\mu_A$.
It follows that $\ell := f -(g-g\circ T+c)$ is an element
of $\ker \op{L}_A$. The decomposition
\[f = \ell + (g-g\circ T+c)\]
shows that $\fspace{X}(\Omega) = \ker\op{L}_A + \fspace{C}$ and
since both spaces are closed,
$\ker\op{L}_A$ and $\fspace{C}$ are complements.

To prove \ref{enumi:coro4}, let $f\in\fspace{X}(\Omega)$ and set
$c=\int f \,\dd\mu_A$ and $g:= (I-\op{L}_A)^{-1}(c-f)$. Now
$g-g\circ T+c$ is an element of $\fspace{C}$, and we have
\begin{align*}
\op{L}_A(g-g\circ T+c) &= \op{L}_A(g) - g +c \\
  &= -(I-\op{L}_A)(g)+c \\
  &= f-c+c = f.
\end{align*}
\end{proof}

We are know in a position to prove our first main result,
that $\fspace{N}$ is an analytic submanifold of
$\fspace{X}(\Omega)$. This result might be known,
but we did not find a clear statement in the literature,
related statements are often framed into a weaker
definition of analyticity, the identification of the tangent
space seems new, and we obtain the result without resorting
to complex analysis as usually used to prove the regularity
of the eigendata of operators (see appendix V in \cite{PP},
where the weak definition of analyticity should be noted,
and also section 3.3 in \cite{Bar2}).
We shall in fact \emph{deduce} from Theorem \ref{theo:normalized}
that the leading eigenvalue and positive eigenfunction of $\op{L}_A$
both depend analytically on $A$.

\begin{theo}\label{theo:normalized}
The set $\fspace{N}$ of normalized potentials is
an analytic submanifold of $\fspace{X}(\Omega)$, and
its tangent space at $A\in\fspace{N}$ is
$T_A\fspace{N} = \ker \op{L}_A$.
\end{theo}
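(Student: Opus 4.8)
The plan is to realize $\fspace{N}$ as a level set of an analytic map to which the analytic implicit function theorem (as recalled in Section~\ref{sec:imp}) applies. First I would fix a normalized potential $A_0\in\fspace{N}$ and consider the map
\[
F:\fspace{X}(\Omega)\to\fspace{X}(\Omega),\qquad F(A)=\op{L}_A(\mathbf{1})-\mathbf{1}=\Big(x\mapsto \sum_{T(y)=x}e^{A(y)}\Big)-\mathbf{1}.
\]
By \ref{hypo:Banach} (the Banach algebra property, which makes $A\mapsto e^A=\sum_k A^k/k!$ an analytic map, the series having infinite radius of convergence) together with \ref{hypo:preserves} (so that $f\mapsto\sum_{T(y)=x}f(y)$ is a bounded linear operator), $F$ is a composition of analytic maps and hence analytic. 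By definition $\fspace{N}=F^{-1}(0)$, so it suffices to check the hypotheses of the implicit function theorem at each $A_0\in\fspace{N}$: that $DF_{A_0}$ is onto and that its kernel is complemented.

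The key computation is the derivative. Differentiating $A\mapsto \op{L}_A(\mathbf{1})$ in the direction $\zeta$ and using that $\op{L}$ is linear in $e^A$, one finds
\[
DF_{A_0}(\zeta) : x\mapsto \sum_{T(y)=x}e^{A_0(y)}\zeta(y) = \op{L}_{A_0}(\zeta),
\]
since $A_0$ is normalized. So $DF_{A_0}=\op{L}_{A_0}$ as a bounded linear operator on $\fspace{X}(\Omega)$. Now I invoke Proposition~\ref{prop:LC}: part~\ref{enumi:coro4} says $\op{L}_{A_0}$ maps $\fspace{C}$ onto $\fspace{X}(\Omega)$, so in particular $DF_{A_0}$ is onto; and part~\ref{enumi:coro3} says $\ker\op{L}_{A_0}$ and $\fspace{C}$ are topological complements in $\fspace{X}(\Omega)$, so $\ker DF_{A_0}=\ker\op{L}_{A_0}$ is complemented. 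The analytic implicit function theorem therefore applies and shows that near $A_0$ the set $\fspace{N}=F^{-1}(0)$ is an analytic submanifold of $\fspace{X}(\Omega)$, with tangent space $T_{A_0}\fspace{N}=\ker DF_{A_0}=\ker\op{L}_{A_0}$, which is exactly the claimed statement.

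The main obstacle is not really any single step but making sure the analyticity of $F$ is established in the \emph{strong} sense of Section~\ref{sec:imp}: I would verify that $A\mapsto e^A$ is represented near any point $A_1$ by the series $e^{A_1}\sum_k \zeta^{(k)}/k!$ with operator norms $\lvert a_k\rvert\le \lVert e^{A_1}\rVert/k!$, whose complex counterpart $\sum_k \lVert e^{A_1}\rVert z^k/k!$ has infinite radius of convergence; composing with the bounded linear operator $f\mapsto\big(x\mapsto\sum_{T(y)=x}f(y)\big)$ and subtracting the constant $\mathbf{1}$ preserves this, so $F$ is analytic. Everything else is then a direct appeal to Proposition~\ref{prop:LC} and to the implicit function theorem as stated in Section~\ref{sec:imp}. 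One extra remark worth including: the decomposition $\fspace{X}(\Omega)=\ker\op{L}_{A_0}\oplus\fspace{C}$ means that locally $\fspace{N}$ can be written as the graph of an analytic map over $\ker\op{L}_{A_0}$ with the $\fspace{C}$-coordinate as the dependent variable, which is the concrete picture behind the normalization map $N$ studied next.
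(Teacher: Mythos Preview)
Your proof is correct and follows essentially the same route as the paper: define $F(A)=\op{L}_A(\mathbf{1})$ (with or without the harmless shift by $\mathbf{1}$), compute $DF_{A_0}=\op{L}_{A_0}$, and invoke Proposition~\ref{prop:LC} to feed the implicit function theorem. One small cosmetic remark: the identity $DF_{A_0}(\zeta)=\op{L}_{A_0}(\zeta)$ holds for any $A_0$, not just normalized ones; the normalization of $A_0$ is only needed where you appeal to Proposition~\ref{prop:LC}.
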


\begin{proof}
This is a direct consequence of the implicit function
theorem.

Let $F:\fspace{X}(\Omega)\to\fspace{X}(\Omega)$ be the map defined by
\[F(A)(x) = \op{L}_A(\mathbf{1})(x) = \sum_{Ty=x} e^{A(y)}.\]
Then $F$ is analytic, as follows from the analyticity of the
exponential:
\[F(A+\zeta)= \sum_{k\ge 0} D^k F_A(\zeta),\]
where
\[D^k F_A(\zeta_1,\dots,\zeta_k)(x) := \sum_{Ty=x} e^{A(y)}\frac{\prod_{i=1}^k \zeta_i(y)}{k!} \]
defines a series of continuous, symmetric $k$-linear operators
with infinite radius of convergence
(note that we use here the assumptions
that $\fspace{X}(\Omega)$ has multiplicative norm,
and that $\sum_{Ty=x} \zeta(y)$ is in $\fspace{X}(\Omega)$ for all
$\zeta\in\fspace{X}(\Omega)$).

Now, given a potential $A$ and a vector $\zeta$ both in
$\fspace{X}(\Omega)$, we have
\[DF_A(\zeta)=\sum_{Ty=x} e^{A(y)}\zeta(y)=\op{L}_A(\zeta),\]
so that $DF_A=\op{L}_A$; since we know from Proposition
\ref{prop:LC} that $\ker\op{L}_A$ is complemented and
$\op{L}_A$ is onto $\fspace{X}(\Omega)$, we can apply
the implicit function theorem.
\end{proof}

We also get directly the analyticity of the normalization map as explained in the last paragraph of Section \ref{sec:imp}

\begin{theo}\label{theo:Nmap}
The normalization map $N:\fspace{X}(\Omega)\to \fspace{N}$
sending a potential to its normalized version is analytic.
Moreover, its derivative $DN_A$ at a point $A\in\fspace{X}(\Omega)$
is the linear projection on  $T_{N(A)}\fspace{N} = \ker\op{L}_{N(A)}$
in the direction of $\fspace{C}$.
\end{theo}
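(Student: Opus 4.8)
The plan is to derive everything from Theorem~\ref{theo:normalized} together with the characterization of $\fspace{C}$ and $\ker\op{L}_A$ as topological complements (Proposition~\ref{prop:LC}\ref{enumi:coro3}). First I would recall that $N$ is defined by $N(A) = A - \log\lambda_A + \log h_A - \log h_A\circ T$, which maps $\fspace{X}(\Omega)$ to the submanifold $\fspace{N}$. To get analyticity of $N$, the cleanest route is to use the last paragraph of Section~\ref{sec:imp}: since $\fspace{N} = F^{-1}(\mathbf 1)$ for the analytic map $F(A) = \op{L}_A(\mathbf 1)$ with $DF_A = \op{L}_A$ surjective and $\ker\op{L}_A$ complemented by $\fspace{C}$, the implicit function theorem says that locally near $N(A)$ the manifold $\fspace{N}$ is the graph of an analytic map over (an open set of) $\ker\op{L}_{N(A)}$, along $\fspace{C}$. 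The map $N$ is nothing but ``flow along $\fspace{C}$ onto $\fspace{N}$'': writing a potential $A' = n' + c'$ with $n'\in\fspace{N}$ near $N(A)$ (graph coordinates) and $c'\in\fspace{C}$, we have $N(A') = n'$. Hence $N$ is, in these local coordinates, the composition of the (linear, continuous, hence analytic) projection onto the $\ker\op{L}_{N(A)}$-factor with the analytic graph parametrization; this shows $N$ is analytic in a neighborhood of every point.

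Next I would compute $DN_A$. Since $N$ restricted to $\fspace{N}$ is the identity (every normalized potential is its own normalization), and since $N$ is constant along the affine directions $A + \fspace{C}$ (because $A$ and $A + g - g\circ T + c$ have the same Gibbs data and the same normalization, as recalled in Section~\ref{subsec:normalization}), we get $DN_A|_{\fspace{C}} = 0$ and $DN_A|_{T_{N(A)}\fspace{N}} = \Id$. By Theorem~\ref{theo:normalized}, $T_{N(A)}\fspace{N} = \ker\op{L}_{N(A)}$, and by Proposition~\ref{prop:LC}\ref{enumi:coro3} we have the topological direct sum $\fspace{X}(\Omega) = \ker\op{L}_{N(A)}\oplus\fspace{C}$. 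A continuous linear map that is the identity on one summand and zero on the other is exactly the (continuous) linear projection onto $\ker\op{L}_{N(A)}$ along $\fspace{C}$; this is $DN_A$.

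To make the two claims of the previous paragraph rigorous I would argue as follows. For the vanishing on $\fspace{C}$: for $g\in\fspace{X}(\Omega)$ and $c\in\mathbb R$, the path $t\mapsto A + t(g - g\circ T + c)$ has constant image under $N$, since for every $t$ the potential differs from $A$ by an element of $\fspace{C}$, hence has the same $\mu$, and its normalization is the unique normalized representative of that fixed class; differentiating at $t=0$ gives $DN_A(g - g\circ T + c) = 0$, so $DN_A$ kills $\fspace{C}$. For the restriction to $\ker\op{L}_{N(A)}$: $N\circ N = N$ by idempotence of normalization, so differentiating at $A$ gives $DN_{N(A)}\circ DN_A = DN_A$; moreover $N$ restricted to the submanifold $\fspace{N}$ is the identity map of $\fspace{N}$, so $DN_{N(A)}$ acts as the identity on $T_{N(A)}\fspace{N} = \ker\op{L}_{N(A)}$. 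Combined with the direct sum decomposition and $\ker DN_A \supseteq \fspace{C}$, a dimension/complement count forces $DN_A$ to be precisely the projection onto $\ker\op{L}_{N(A)}$ along $\fspace{C}$, and in particular $\ker DN_A = \fspace{C}$ exactly.

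The main obstacle I anticipate is the bookkeeping around \emph{which} base point the tangent space and projection refer to: $DN_A$ lands in $T_{N(A)}\fspace{N}$, not $T_A\fspace{N}$, so one must be careful that the relevant complement decomposition of $\fspace{X}(\Omega)$ is the one at $N(A)$, namely $\ker\op{L}_{N(A)}\oplus\fspace{C}$ (note $\fspace{C}$ itself does not depend on the base point, which is what makes the statement clean). A secondary point requiring a line of care is justifying that the graph coordinates furnished by the implicit function theorem genuinely express $N$ as claimed — i.e.\ that sliding a potential along $\fspace{C}$ until it hits $\fspace{N}$ coincides with the analytic graph retraction — but this is immediate once one notes both maps are retractions of a neighborhood onto $\fspace{N}$ with the same fibers (the $\fspace{C}$-cosets), of which there is only one.
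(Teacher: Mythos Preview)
Your analyticity argument via the implicit-function-theorem graph representation is correct and uses the same ingredients as the paper. The paper packages it slightly differently: it passes through the quotient $\Pi:\fspace{X}(\Omega)\to\fspace{Q}$, observes that $\Pi|_{\fspace{N}}$ has invertible differential $\Pi|_{\ker\op{L}_A}$ at each $A\in\fspace{N}$ (by the complement decomposition), applies the inverse function theorem to get $(\Pi|_{\fspace{N}})^{-1}$ analytic, and writes $N=(\Pi|_{\fspace{N}})^{-1}\circ\Pi$. Your graph-retraction picture and this quotient-map picture are two sides of the same coin.

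There is, however, a genuine gap in your computation of $DN_A$. The ``dimension/complement count'' in your third paragraph does not work in infinite dimensions, and the idempotence relation $DN_{N(A)}\circ DN_A = DN_A$ together with $DN_{N(A)}|_{\ker\op{L}_{N(A)}}=\Id$ is tautological once you know $\operatorname{im}DN_A\subset\ker\op{L}_{N(A)}$: it only says $DN_A(\zeta)=DN_A(\zeta)$. What you have \emph{not} established is that $DN_A|_{\ker\op{L}_{N(A)}}=\Id$ when $A\notin\fspace{N}$; your second paragraph asserts this but only justifies it for $A\in\fspace{N}$.

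The one-line fix uses the very $\fspace{C}$-invariance you already invoked: since $A-N(A)\in\fspace{C}$ and $N$ is constant along $\fspace{C}$-translates, $N(A+\zeta)=N\big(N(A)+\zeta+(A-N(A))\big)=N(N(A)+\zeta)$ for every $\zeta$, hence $DN_A=DN_{N(A)}$. Now you are at the normalized point $N(A)\in\fspace{N}$, where $N|_{\fspace{N}}=\Id$ gives $DN_{N(A)}|_{\ker\op{L}_{N(A)}}=\Id$ and your argument gives $DN_{N(A)}|_{\fspace{C}}=0$; together with the topological decomposition $\fspace{X}(\Omega)=\ker\op{L}_{N(A)}\oplus\fspace{C}$ this identifies $DN_A=DN_{N(A)}$ as the claimed projection.
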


\begin{proof}
See figure \ref{fig:potentials} for a general picture of the various maps
involved.
Let $\Pi:\fspace{X}(\Omega)\to\fspace{Q}$
be the quotient map; it is a continuous linear map, and in particular
it is analytic. Its restriction $\Pi_{|\fspace{N}}$
to the submanifold $\fspace{N}$ is therefore an analytic map,
and we have for all $A\in\fspace{N}$:
\[ D(\Pi_{|\fspace{N}})_A = \Pi_{|T_A\fspace{N}} = \Pi_{|\ker\op{L}_A}.\]
Since $\ker\op{L}_A$ and $\fspace{C}$ are topological complements,
this differential is invertible with continuous inverse.
The inverse function theorem then ensures that
\[\Pi_{|\fspace{N}}^{-1} : \fspace{Q} \to \fspace{N}\]
is well-defined and analytic. We get the desired result by observing
that
\[N = \Pi_{|\fspace{N}}^{-1} \circ \Pi.\]
\end{proof}

\begin{figure}[tp]\begin{center}
\begin{tikzpicture}[scale=0.9, every node/.style={scale=0.9}]
\draw (0,0) rectangle (6,4) ;
\draw (4.5,4) node [above] {$\fspace{X}(\Omega)$};
\draw [red,thick] plot [smooth] coordinates {(0,1.5) (1,2) (3,1) (6,3)}
  node [right] {$\fspace{N}$} ;
\draw [blue,very thick] (3,4) -- (3,0)
  node [above right] {$\fspace{C}$} ;
\draw (.5,2.01) -- (1.5,2.01) node [right] {$\ker \op{L}_A$} ;
\fill (1,2) circle (.07) node [above] {$A$} ;
\draw [->] (3, -.3) -- (3, -1.7) ;
\draw (3,-1) node [right] {$\Pi$} ;
\draw [red,thick] (0,-2) -- (6,-2) node [right] {$\fspace{Q}$} ;
\fill (1,-2) circle (.07) node [above] {$[A]$} ;
\draw [->] (6.3,2) -- (7.7,2) ;
\draw (7,2) node [above] {$G$} ;
\draw (10,2) ellipse (2 and 1.5) ;
\draw (10,3.5) node [above] {$\spacem{P}(\Omega)$} ;
\draw [red,thick] plot [smooth]
  coordinates {(8.2,1.5) (9,2) (10,1) (11.5,2.5)} ;
\draw [red,thick] (9.7,2) node [above] {Gibbs measures} ;
\fill (9,2) circle (.07) node [below] {$\mu_A$} ;
\draw [->] (5.7,-1.7) -- (8.5,.5) ;
\end{tikzpicture}
\caption{Potentials and Gibbs measures}
\label{fig:potentials}
\end{center}\end{figure}
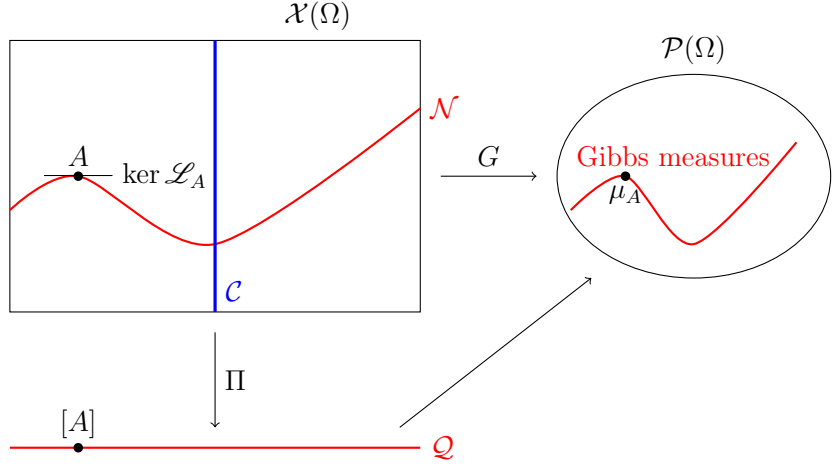

\begin{coro}\label{coro:eigendata}
The maps $\Lambda:\fspace{X}(\Omega)\to\mathbb{R}$
and $H:\fspace{X}(\Omega)\to\fspace{X}(\Omega)$ sending
a potential to its leading eigendata, i.e. defined by
\[\Lambda(A)=\lambda_A \quad\mbox{and}\quad H(A)=h_A\]
(normalized by the condition $\log h_A\in\ker\mu_{A_0}$
for any fixed $A_0$) are analytic maps.

Moreover for all $A,\zeta\in\fspace{X}(\Omega)$:
\[D(\log\Lambda)_A(\zeta) = \int \zeta \,\dd\mu_A.\]
\end{coro}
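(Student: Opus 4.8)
The plan is to derive everything in Corollary \ref{coro:eigendata} from the analyticity of the normalization map $N$ (Theorem \ref{theo:Nmap}) together with the explicit formula $N(A)=A-\log\lambda_A+\log h_A-\log h_A\circ T$ that defines it. The key observation is that this single identity packages $\log\lambda_A$ and $\log h_A$ into one $\fspace{X}(\Omega)$-valued expression, and that $N$ is known to be analytic. What remains is to disentangle the two pieces, and for this the subspace decomposition from Proposition \ref{prop:LC} is exactly the right tool.

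First I would fix a reference potential $A_0$ and recall that $\fspace{X}(\Omega)=\ker\op{L}_{N(A_0)}\oplus\fspace{C}$ — or more simply, decompose along constants and coboundaries: any $f\in\fspace{C}$ writes uniquely as $f=g-g\circ T+c$ with $c=\int f\,\dd\mu$ (for any invariant $\mu$) and $g$ normalized by $\int g\,\dd\mu_{A_0}=0$, and by Proposition \ref{prop:LC}\ref{enumi:coro1} both $c$ and $g$ depend continuously (indeed linearly) on $f$. Now the quantity $A-N(A)=\log\lambda_A-\log h_A+\log h_A\circ T$ lies in $\fspace{C}$: it is the constant $\log\lambda_A$ minus the coboundary $\log h_A-\log h_A\circ T$. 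Applying the continuous linear projections $\fspace{C}\to\mathbb{R}$ and $\fspace{C}\to\{g:\int g\,\dd\mu_{A_0}=0\}$ to the analytic map $A\mapsto A-N(A)$ then yields that $A\mapsto\log\lambda_A$ and $A\mapsto\log h_A$ (with the stated normalization) are analytic, hence so are $\Lambda=\exp\circ\log\Lambda$ and $H=\exp\circ\log H$, using that $\fspace{X}(\Omega)$ is a Banach algebra on which $\exp$ is analytic.

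For the derivative of $\log\Lambda$: differentiate the identity $N(A)=A-\log\lambda_A+\log h_A-\log h_A\circ T$ at $A$ in a direction $\zeta$. Since $DN_A$ is the projection onto $\ker\op{L}_{N(A)}$ along $\fspace{C}$, and $\zeta-DN_A(\zeta)\in\fspace{C}$ is precisely the coboundary-plus-constant part, we get $\zeta-DN_A(\zeta)=D(\log\Lambda)_A(\zeta)+(\text{coboundary})$. Taking $\int\cdot\,\dd\mu_A$ of both sides kills the coboundary (as $\mu_A$ is $T$-invariant) and the constant $D(\log\Lambda)_A(\zeta)$ survives, giving $D(\log\Lambda)_A(\zeta)=\int(\zeta-DN_A(\zeta))\,\dd\mu_A=\int\zeta\,\dd\mu_A-\int DN_A(\zeta)\,\dd\mu_A$. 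It then suffices to check that $\int f\,\dd\mu_A=0$ whenever $f\in\ker\op{L}_{N(A)}$, which is the computation already performed inside the proof of Proposition \ref{prop:LC}: $\int f\,\dd\mu_A=\int\op{L}_{N(A)}(f)\,\dd\mu_A=0$ since $\op{L}_{N(A)}^*\mu_A=\mu_A$. Hence $D(\log\Lambda)_A(\zeta)=\int\zeta\,\dd\mu_A$.

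The only subtlety — and the step I would be most careful about — is the bookkeeping of normalizations: one must check that the constant part and the coboundary part in $A-N(A)$ are genuinely $\log\lambda_A$ and $\log h_A-\log h_A\circ T$ under the chosen normalization $\log h_A\in\ker\mu_{A_0}$, rather than differing by an additive constant absorbed into $\lambda_A$. This is a matter of pinning down $h_A$ by the single scalar condition $\int\log h_A\,\dd\mu_{A_0}=0$ and noting this is exactly the normalization making the decomposition of $\fspace{C}$ into constants plus ($\mu_{A_0}$-mean-zero) functions unique; once that is fixed, the projections are well-defined continuous linear maps and everything else is formal. No hard analysis is needed here — the work was already done in establishing that $N$ is analytic and that $\fspace{C}$ splits nicely.
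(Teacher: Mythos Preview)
Your proposal is correct and follows essentially the same route as the paper: the paper writes the explicit formulas $\Lambda(A)=\exp\big(\int(A-N(A))\,\dd\mu_{A_0}\big)$ and $H(A)=\exp\big(\op{M}_{A_0}(A-N(A))\big)$, which are precisely your ``constant part'' and ``coboundary part'' projections of the analytic map $A\mapsto A-N(A)\in\fspace{C}$ (cf.\ Proposition~\ref{prop:LC}\ref{enumi:coro1}), and then differentiates $\log\Lambda(A)=\int(A-N(A))\,\dd\mu_{A_0}$ and specializes $A_0=A$ to obtain the derivative formula. Your derivation of $D(\log\Lambda)_A$ by differentiating the defining identity for $N(A)$ and integrating against $\mu_A$ is a minor reordering of the same computation.
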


Note that it will turn out that in our framework $\log\Lambda$
equals the pressure functional, so that this result
gives also the derivative of the later.

\begin{proof}
Fix $A_0$ be any potential, which can be assumed without loss of
generality to be normalized. We then have
\[\Lambda(A)=\exp\big(\int(A-N(A)) \,\dd\mu_{A_0}\big)
\quad\mbox{and}\quad H(A)=\exp\big(\op{M}_{A_0}(A-N(A)\big)\]
which are analytic as composed of analytic maps.

Differentiating $\log\Lambda(A) = \int (A-N(A)) \,\dd\mu_{A_0}$
with respect to $A$ it comes
\[D(\log\Lambda)_A(\zeta) = \int (\zeta - DN_A(\zeta)) \,\dd\mu_{A_0}.\]
This holds for any $A_0$ and any $A$, in particular taking
$A_0=A$ and observing that $DN_A(\zeta)\in \ker\mu_A$
yields the desired formula.
\end{proof}

\begin{coro}\label{coro:analytic}
The map $G:A\mapsto\mu_A\in \fspace{X}(\Omega)^*$ is analytic.
In particular for each $\varphi\in\fspace{X}(\Omega)$, the map
$G_\varphi:A\mapsto \int \varphi \,\dd\mu_A$ is analytic.
\end{coro}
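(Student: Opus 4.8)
The plan is to reduce to normalized potentials, where $\mu_A$ is simply the eigenmeasure of $\op{L}_A^\ast$, and then to write $\mu_A$ as the image of a \emph{fixed} covector under an operator that depends analytically \emph{and invertibly} on $A$; analyticity of $G$ then drops out of the analyticity of operator inversion.

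First I would reduce to $\fspace{N}$: since $\mu_A=\mu_{N(A)}$ (Section~\ref{subsec:normalization}) and $N$ is analytic (Theorem~\ref{theo:Nmap}), it suffices to prove that $B\mapsto\mu_B$ is analytic near an arbitrary $B_0\in\fspace{N}$, and it is in fact enough to produce a formula that is analytic on a neighbourhood of $B_0$ in all of $\fspace{X}(\Omega)$ and returns $\mu_B$ whenever $B$ is normalized. Then I would fix $x_0\in\Omega$; by~\ref{hypo:Banach} the evaluation $\delta_{x_0}:f\mapsto f(x_0)$ is a continuous linear form on $\fspace{X}(\Omega)$ and $\Pi_0:f\mapsto f(x_0)\mathbf{1}$ is a bounded operator, and I would introduce the auxiliary operator
\[\op{R}_B:=I-\op{L}_B+\Pi_0 .\]
By the same computation as for $F$ in the proof of Theorem~\ref{theo:normalized}, $B\mapsto\op{L}_B$ is analytic into the Banach space $\mathcal{B}(\fspace{X}(\Omega))$ of bounded operators, hence so is $B\mapsto\op{R}_B$.

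The key step — the only one where the Ruelle--Perron--Frobenius hypotheses really enter — is that $\op{R}_{B_0}$ is a topological isomorphism. Here I would use that $\op{L}_{B_0}\mathbf{1}=\mathbf{1}$ and that $\mu_{B_0}$ is an invariant probability fixed by $\op{L}_{B_0}^\ast$, giving the topological splitting $\fspace{X}(\Omega)=\mathbb{R}\mathbf{1}\oplus\ker\mu_{B_0}$ (the projections being continuous since $\mu_{B_0}\in\fspace{X}(\Omega)^\ast$). With respect to it, $\op{R}_{B_0}$ is upper triangular: it fixes $\mathbf{1}$, and on $k\in\ker\mu_{B_0}$ it acts by $\op{R}_{B_0}k=(I-\op{L}_{B_0})k+k(x_0)\mathbf{1}$ with $(I-\op{L}_{B_0})k\in\ker\mu_{B_0}$; since $I-\op{L}_{B_0}$ is invertible on $\ker\mu_{B_0}$ by the Lemma proved above, $\op{R}_{B_0}$ is invertible. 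As the set of invertible operators is open and inversion is analytic there, $B\mapsto\op{R}_B^{-1}$ is analytic on a neighbourhood of $B_0$ in $\fspace{X}(\Omega)$.

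Finally I would recover the measure: for normalized $B$ one has $\op{L}_B^\ast\mu_B=\mu_B$ and $\mu_B(\mathbf{1})=1$, so for $\psi:=\op{R}_B^{-1}\varphi$, applying $\mu_B$ to $\op{R}_B\psi=\varphi$ gives $\mu_B(\psi)-\mu_B(\op{L}_B\psi)+\psi(x_0)=\mu_B(\varphi)$, i.e. $\psi(x_0)=\mu_B(\varphi)$. Thus
\[\mu_B(\varphi)=\big(\op{R}_B^{-1}\varphi\big)(x_0),\qquad\text{equivalently}\qquad \mu_B=\delta_{x_0}\circ\op{R}_B^{-1}\in\fspace{X}(\Omega)^\ast .\]
Since $S\mapsto\delta_{x_0}\circ S$ is a bounded linear map $\mathcal{B}(\fspace{X}(\Omega))\to\fspace{X}(\Omega)^\ast$, the map $B\mapsto\mu_B$ is analytic near $B_0$, hence $G=(B\mapsto\mu_B)\circ N$ is analytic: the formula $A\mapsto\delta_{x_0}\circ\op{R}_{N(A)}^{-1}$ is analytic near every $A_0$ and agrees with $\mu_A$ because $N(A)$ is always normalized. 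The statement for $G_\varphi$ is then immediate, $G_\varphi=(\mu\mapsto\mu(\varphi))\circ G$ being the composition of $G$ with a fixed continuous linear form on $\fspace{X}(\Omega)^\ast$ (concretely $G_\varphi(A)=(\op{R}_{N(A)}^{-1}\varphi)(x_0)$). The only point requiring genuine work is the invertibility of $\op{R}_{B_0}$; the rest is bookkeeping, the mild cleverness lying solely in choosing the rank-one correction $\Pi_0$ that makes $\op{R}_B$ invertible while keeping $\delta_{x_0}$ as a fixed $\op{R}_B^\ast$-preimage of $\mu_B$.
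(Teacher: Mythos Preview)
Your argument is correct, but it takes a markedly different route from the paper. The paper's proof is a one-liner: Corollary~\ref{coro:eigendata} already gives $D(\log\Lambda)_A(\zeta)=\int\zeta\,\dd\mu_A$, i.e.\ $\mu_A=D(\log\Lambda)_A$ as an element of $\fspace{X}(\Omega)^*$, so the analyticity of $G$ is simply the analyticity of the derivative of the analytic map $\log\Lambda$. Your approach instead builds an explicit resolvent-type representation $\mu_B=\delta_{x_0}\circ(I-\op{L}_B+\Pi_0)^{-1}$ for normalized $B$, and deduces analyticity from that of operator inversion. This is longer and bypasses the identity $\mu_A=D(\log\Lambda)_A$ entirely; what it buys is a self-contained formula for $\mu_B$ that does not rely on first computing the derivative of the leading eigenvalue, and it makes transparent (in an elementary way, without Kato theory) why the spectral gap translates into analytic dependence of the eigenmeasure. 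The paper's route is shorter precisely because it has already paid that cost in Corollary~\ref{coro:eigendata}.
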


\begin{proof}
Corollary \ref{coro:eigendata} implies that
$\mu_A=D(\log\Lambda)_A$ as a linear form defined on $\fspace{X}(\Omega)$,
so that the Corollary follows from the analyticity
of $\log\Lambda$.
\end{proof}

At this point we have proved Corollary \ref{coromain:analytic}
from the introduction.

Corollaries \ref{coro:eigendata} and \ref{coro:analytic}
where obtained under different assumptions
and with different methods by Bomfim, Castro and Varandas \cite{BCV};
note that we notably do not assume the high-temperature regime
(see their conditions
(P) and (P')) and that once our framework is set, our proofs are
very simple.

\section{Differentiating the Gibbs map in the affine structure}
\label{sec:integral}

There are at least two ways to endow the set of probability measures
$\spacem{P}(\Omega)$ with a kind of differential structure, i.e.
to define what it means for a map such as the Gibbs map $G:A\mapsto \mu_A$
to be differentiable. In this section, we consider the
\emph{affine structure}, while in Section \ref{sec:Was} we will consider
the \emph{Wasserstein structure}.

The affine structure
is obtained simply by observing that $\spacem{P}(\Omega)$ is a convex set
in $\fspace{X}(\Omega)^*$; ``coordinates'' are obtained
by looking at integral of test functions, so that $G$ is often
considered to be differentiable if
\[\forall A,\zeta,\varphi \in\fspace{X}(\Omega): \quad
\frac{\dd}{\dd t} \left. \int \varphi \,\dd\mu_{A+t\zeta} \right|_{t=0}
\mbox{ exists}.\]

We will adopt here the definition of \emph{Fr\'echet} differentiability
for $G:\fspace{X}(\Omega)\to\fspace{X}(\Omega)^*$. It is stronger than the
above one in three respects: we ask
that for each $\varphi$
the directional derivatives at $A$ can be collected as a
\emph{continuous linear} map $\fspace{X}(\Omega)\to \mathbb{R}$,
that all these linear maps for various $\varphi$ can be collected
as a continuous linear map $\fspace{X}(\Omega)\to \fspace{X}(\Omega)^*$,
and that in the Taylor formula defining the derivative,
the remainder is of the form $o(\lVert\varphi\rVert \lVert\zeta\rVert)$
(when $\zeta\to0$).
Note that at this point this strong definition
is already ensured by the analyticity of $G$ and we only want
to get an explicit formula.

\begin{theo}\label{theo:affine-derivative}
For all $A \in\fspace{X}(\Omega)$ there is a neighborhood $U$ of $0$
in $\fspace{X}(\Omega)$ such that for all $\varphi\in\fspace{X}(\Omega)$
and all $\zeta\in U$, we have
\[\int \varphi \,\dd\mu_{A+\zeta} - \int\varphi \,\dd\mu_A =
  \int (I-\op{L}_{N(A)})^{-1}(\varphi_A) \cdot DN_A(\zeta) \,\dd\mu_A
   + O(\lVert\varphi\rVert \lVert \zeta\rVert^2)\]
where $\varphi_A:=\varphi-\int\varphi\,\dd\mu_A$ is the projection
of $\varphi$ on $\ker\mu_A$ along the space of constants.
\end{theo}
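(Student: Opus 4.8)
The plan is to reduce everything to the normalized case and then expand a Neumann series. First I would use the conjugacy machinery from Section~\ref{subsec:normalization}: since $\mu_{A+\zeta}$ depends only on the class of $A+\zeta$ modulo $\fspace{C}$, and $N$ is the projection onto $\fspace{N}$ along $\fspace{C}$, we have $\mu_{A+\zeta}=\mu_{N(A+\zeta)}$. Writing $B_0=N(A)$ and using that $N$ is analytic (Theorem~\ref{theo:Nmap}) with $DN_A$ the projection onto $\ker\op{L}_{B_0}$ along $\fspace{C}$, we get $N(A+\zeta)=B_0+\xi+O(\lVert\zeta\rVert^2)$ where $\xi:=DN_A(\zeta)\in\ker\op{L}_{B_0}=T_{B_0}\fspace{N}$. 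So it suffices to compute, to first order in a tangent vector $\xi$, the quantity $\int\varphi\,\dd\mu_{B_0+\xi}$ for $B_0\in\fspace{N}$ and $\xi\in\ker\op{L}_{B_0}$, and then substitute; the $O(\lVert\zeta\rVert^2)$ error in $N$ and the continuity (boundedness) of the linear functional $\eta\mapsto\int(I-\op{L}_{B_0})^{-1}(\varphi_{A})\cdot\eta\,\dd\mu_A$ absorb the difference between $\xi$ and the true increment, giving the stated $O(\lVert\varphi\rVert\lVert\zeta\rVert^2)$ remainder (one must keep track of the $\lVert\varphi\rVert$ factor, which is where the norm estimate from Lemma~1, namely $\lVert(I-\op{L}_{B_0})^{-1}\rVert\le D/\delta$, is used).

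For the core computation at a normalized potential, let $B_t=B_0+t\xi$. Since $\xi\in\ker\op{L}_{B_0}$, the operator $\op{L}_{B_t}$ is close to $\op{L}_{B_0}$ and still normalized to first order; the Gibbs measure $\mu_{B_t}$ is the unique $\op{L}_{B_t}^\ast$-fixed probability. I would differentiate the fixed-point relation $\mu_{B_t}(\op{L}_{B_t}f)=\mu_{B_t}(f)$, or equivalently work with $\mu_{B_t}=\lim_n (\op{L}_{B_t}^\ast)^n\delta$-type limits. Concretely, from $\frac{\dd}{\dd t}\op{L}_{B_t}(f)=\op{L}_{B_0}(\xi f)$ at $t=0$, differentiating $\int\op{L}_{B_t}(\varphi)\,\dd\mu_{B_t}=\int\varphi\,\dd\mu_{B_t}$ is not quite enough by itself; instead iterate: $\int\op{L}_{B_t}^n(\varphi)\,\dd\mu_{B_t}=\int\varphi\,\dd\mu_{B_t}$ for all $n$, differentiate in $t$ at $0$, use $\frac{\dd}{\dd t}\op{L}_{B_t}^n(\varphi)|_0=\sum_{j=0}^{n-1}\op{L}_{B_0}^{n-1-j}(\xi\cdot\op{L}_{B_0}^{j}\varphi)$, integrate against $\mu_{B_0}=\mu_A$ (which kills the outer $\op{L}_{B_0}^{n-1-j}$ since $\mu_A$ is $\op{L}_{B_0}^\ast$-invariant), and let $n\to\infty$ using the spectral gap so that $\int\op{L}_{B_0}^n(\varphi)\,\dd\mu_A\to\int\varphi\,\dd\mu_A$ and the $\op{L}_{B_0}^{j}\varphi$ may be replaced by $\op{L}_{B_0}^{j}\varphi_A$ up to a constant killed by $\int\xi\,\dd\mu_A=0$ (note $\xi\in\ker\op{L}_{B_0}\subset\ker\mu_A$). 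This yields $\frac{\dd}{\dd t}\int\varphi\,\dd\mu_{B_t}|_0=\int\big(\sum_{j\ge0}\op{L}_{B_0}^{j}\varphi_A\big)\cdot\xi\,\dd\mu_A=\int(I-\op{L}_{B_0})^{-1}(\varphi_A)\cdot\xi\,\dd\mu_A$, which is exactly the claimed formula with $\xi=DN_A(\zeta)$ and $\op{L}_{B_0}=\op{L}_{N(A)}$.

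The main obstacle is making the interchange of the $t$-derivative with the $n\to\infty$ limit rigorous and uniform enough to produce a genuine Fr\'echet (not merely G\^ateaux) statement with the quadratic remainder. The clean way to avoid delicate uniformity arguments is to invoke analyticity: $G$ is analytic by Corollary~\ref{coro:analytic}, hence so is $t\mapsto\int\varphi\,\dd\mu_{B_0+t\xi}$, so its derivative at $0$ is legitimately computed term-by-term from the convergent series for $\op{L}_{B_t}^n$ and $\mu_{B_t}$; one then only needs the telescoping identity above plus the geometric bound from Lemma~1, and the Taylor expansion of $G$ together with that of $N$ gives the full second-order statement. I would therefore structure the proof as: (i) reduce to $A\in\fspace{N}$, $\zeta\in\ker\op{L}_A$ via $N$; (ii) justify term-by-term differentiation by analyticity; (iii) perform the telescoping computation and pass to the limit using the spectral gap; (iv) reinstate general $A,\zeta$ and collect the remainder, using $\lVert(I-\op{L}_{N(A)})^{-1}\rVert\le D/\delta$ to get the $\lVert\varphi\rVert\lVert\zeta\rVert^2$ control.
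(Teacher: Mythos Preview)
Your overall strategy is correct and amounts to a genuine alternative to the paper's argument. Both proofs share step (i): reduce to a normalized $B_0=N(A)$ and a tangent vector $\xi=DN_A(\zeta)\in\ker\op{L}_{B_0}$, then recover the general statement from analyticity of $N$. The difference lies in the core computation. The paper does \emph{not} differentiate: it estimates the finite increment $\int\varphi\,\dd(\mu_B-\mu_A)$ for two \emph{normalized} potentials $A,B$ directly, obtaining the recursion
\[
\int\varphi\,\dd(\mu_B-\mu_A)=\int\varphi\cdot(B-A)\,\dd\mu_A+\int\op{L}_A(\varphi)\,\dd(\mu_B-\mu_A)+\op{I}(\varphi,B),
\]
bounds the error term $\op{I}$ by $C\lVert\varphi\rVert\lVert B-A\rVert^2$ using $R(x)=e^x-1-x$ and the local Lipschitz continuity of $G$, and then iterates to produce $\sum_{n\ge0}\op{L}_A^n(\varphi)=(I-\op{L}_A)^{-1}(\varphi)$. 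It finally sets $B=N(A+\zeta)$. Your route instead computes only the \emph{derivative} $DG_{B_0}(\xi)(\varphi)$ by the telescoping product rule for $\op{L}_{B_t}^n$, and then pulls the quadratic remainder out of the analyticity of $G$ (Corollary~\ref{coro:analytic}). The paper's approach is slightly more self-contained (it uses analyticity of $G$ only through Lipschitz continuity) and gives a concrete expression for the error; yours is cleaner once Corollary~\ref{coro:analytic} is on the table.

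One point needs repair. The identity you differentiate, $\int\op{L}_{B_t}^n(\varphi)\,\dd\mu_{B_t}=\int\varphi\,\dd\mu_{B_t}$, is \emph{false} for $t\neq0$: the path $B_t=B_0+t\xi$ leaves $\fspace{N}$ immediately (Remark~\ref{rema:straight}), and $\mu_{B_t}$ is invariant for $\op{L}_{N(B_t)}^\ast$, not for $\op{L}_{B_t}^\ast$. Your phrase ``normalized to first order'' is the right instinct, but as written you are equating the $t$-derivatives of two functions that do not agree on a neighborhood of $0$. The fix is short: either run the telescoping with $\op{L}_{N(B_t)}$ in place of $\op{L}_{B_t}$ and use $\frac{\dd}{\dd t}N(B_t)\big|_{0}=DN_{B_0}(\xi)=\xi$, so the derivative formula $\frac{\dd}{\dd t}\op{L}_{N(B_t)}^n(\varphi)\big|_{0}=\sum_{j=0}^{n-1}\op{L}_{B_0}^{n-1-j}(\xi\cdot\op{L}_{B_0}^j\varphi)$ is unchanged; or observe that $\xi\in\ker\op{L}_{B_0}\subset\ker\mu_{B_0}$ forces $\dot\lambda_0=0$ and $\dot h_0=0$ (from Corollary~\ref{coro:eigendata}), hence $N(B_t)=B_t+O(t^2)$ and $\int\op{L}_{B_t}^n(\varphi)\,\dd\mu_{B_t}-\int\varphi\,\dd\mu_{B_t}=O(t^2)$, which legitimizes taking the $t$-derivative at $0$. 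With either patch, your steps (ii)--(iv) go through.
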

Implicitly, the constant in the $O$ depends only on $A$ (and of course
$U,\Omega,T,\fspace{X}(\Omega)$) but not on
$\varphi$ and $\zeta$.
This result will be deduced from the following special
case where the expression is simpler.
\begin{theo}\label{theo:affine-derivative-normalized}
Assume that $A$ is normalized, $\varphi$
has mean $0$ with respect to $\mu_A$, and
$\zeta$ is tangent to $\fspace{N}$ at $A$ and small enough. Then:
\[\int \varphi \,\dd\mu_{A+\zeta} =
  \int (I-\op{L}_A)^{-1}(\varphi) \cdot \zeta \,\dd\mu_A
   + O(\lVert\varphi\rVert \lVert \zeta\rVert^2).\]
\end{theo}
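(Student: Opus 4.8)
The plan is to reduce the computation to the dual fixed-point equation of a normalized potential and to differentiate it once. Since $A$ is normalized and $\zeta\in T_A\fspace{N}=\ker\op{L}_A$, Theorem~\ref{theo:Nmap} says $DN_A$ is the projection of $\fspace{X}(\Omega)=\ker\op{L}_A\oplus\fspace{C}$ onto $\ker\op{L}_A$, so $DN_A(\zeta)=\zeta$. Hence the analytic curve $B_t:=N(A+t\zeta)$ lies entirely in $\fspace{N}$, with $B_0=A$ and $\frac{\dd}{\dd t}B_t\big|_{t=0}=\zeta$, and $\mu_{A+t\zeta}=\mu_{B_t}$ for all $t$ because $A+t\zeta$ and $B_t$ differ by an element of $\fspace{C}$. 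In particular $DG_A(\zeta)=\frac{\dd}{\dd t}\mu_{B_t}\big|_{t=0}=:m\in\fspace{X}(\Omega)^*$, and it remains only to identify $m(\varphi)$ for $\varphi\in\ker\mu_A$.

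Each $B_t$ being normalized, $\mu_{B_t}$ satisfies $\op{L}_{B_t}^*\mu_{B_t}=\mu_{B_t}$, i.e. $\int\op{L}_{B_t}(f)\,\dd\mu_{B_t}=\int f\,\dd\mu_{B_t}$ for every $f\in\fspace{X}(\Omega)$. Every factor here is analytic in $t$ — $B\mapsto\op{L}_B$ by the exponential series, $t\mapsto\mu_{B_t}$ by Corollary~\ref{coro:analytic} — so I may differentiate at $t=0$. Using the easily-checked identity $\frac{\dd}{\dd t}\op{L}_{B_t}(f)\big|_{t=0}=\op{L}_A(\zeta f)$ together with $\op{L}_A^*\mu_A=\mu_A$, this gives
\[\int f\,\dd m=\int\zeta f\,\dd\mu_A+\int\op{L}_A(f)\,\dd m\qquad\text{for all }f\in\fspace{X}(\Omega),\]
that is, $(I-\op{L}_A^*)m$ equals the linear form $f\mapsto\int\zeta f\,\dd\mu_A$. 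I will not invert $I-\op{L}_A^*$ globally (it has a one-dimensional kernel $\bR\mu_A$); instead, for $\varphi\in\ker\mu_A$ I set $\psi:=(I-\op{L}_A)^{-1}\varphi\in\fspace{X}(\Omega)$, which is legitimate by the Lemma of Section~\ref{sec:normalizing}. Then $\varphi=\psi-\op{L}_A\psi$, hence
\[m(\varphi)=\int\psi\,\dd m-\int\op{L}_A(\psi)\,\dd m=\big((I-\op{L}_A^*)m\big)(\psi)=\int\zeta\,\psi\,\dd\mu_A=\int(I-\op{L}_A)^{-1}(\varphi)\cdot\zeta\,\dd\mu_A,\]
which is the claimed leading term.

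To obtain the statement with a quadratic remainder rather than a mere directional derivative, I invoke analyticity once more: by Corollary~\ref{coro:analytic}, $G$ is analytic into $\fspace{X}(\Omega)^*$, so $\int\varphi\,\dd\mu_{A+\zeta}=\int\varphi\,\dd\mu_A+DG_A(\zeta)(\varphi)+O(\lVert\varphi\rVert\,\lVert\zeta\rVert^2)$ with the error uniform in $\varphi$ — exactly the Fr\'echet-type control recalled just before the theorem — and here $\int\varphi\,\dd\mu_A=0$ while $DG_A(\zeta)(\varphi)=m(\varphi)$ has just been computed. I do not expect a serious analytic obstacle; the main thing to set up correctly is the \emph{choice} of equation to differentiate: working with $h_{A+t\zeta}$ and $\lambda_{A+t\zeta}$ separately would be cumbersome, whereas the measure fixed-point equation $\op{L}_{B_t}^*\mu_{B_t}=\mu_{B_t}$ for the normalized representatives $B_t$ linearizes cleanly, and the apparent non-invertibility of $I-\op{L}_A^*$ is harmless because $m(\varphi)$ is extracted by pairing only against $\psi\in(I-\op{L}_A)\big(\ker\mu_A\big)$, precisely the range on which the Lemma of Section~\ref{sec:normalizing} provides a bounded inverse. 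The remaining estimates — continuity of $\op{L}_A$ and of the summation operator, the Banach-algebra inequality, local boundedness of $t\mapsto\mu_{B_t}$ — are already guaranteed by hypotheses \ref{hypo:Banach}--\ref{hypo:RPFc}.
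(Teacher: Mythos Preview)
Your argument is correct, and it takes a genuinely different route from the paper's.

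The paper works directly with a \emph{pair} of normalized potentials $A$ and $B$: starting from
\(\int\varphi\,\dd(\mu_B-\mu_A)\), it uses the invariance $\op{L}_B^*\mu_B=\mu_B$, $\op{L}_A^*\mu_A=\mu_A$ to derive a one-step recursion
\[
\int\varphi\,\dd(\mu_B-\mu_A)=\int\varphi\,(B-A)\,\dd\mu_A+\int\op{L}_A(\varphi)\,\dd(\mu_B-\mu_A)+\op{I}(\varphi,B),
\]
bounds the junk term $\op{I}$ by $O(\lVert\varphi\rVert\,\lVert B-A\rVert^2)$ using the local Lipschitz continuity of $G$ and the Banach-algebra estimate on $R(x)=e^x-1-x$, and then iterates this recursion (the spectral gap kills $\op{L}_A^{n}(\varphi)$) to obtain
\(\int(I-\op{L}_A)^{-1}(\varphi)\,(B-A)\,\dd\mu_A+O(\lVert\varphi\rVert\,\lVert B-A\rVert^2)\).
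Only afterwards does it specialize to $B=N(A+\zeta)=A+\zeta+O(\lVert\zeta\rVert^2)$.

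You instead differentiate the eigenmeasure equation $\op{L}_{B_t}^*\mu_{B_t}=\mu_{B_t}$ along the curve $B_t=N(A+t\zeta)$, identify $(I-\op{L}_A^*)m$ as multiplication by $\zeta$ against $\mu_A$, and read off $m(\varphi)$ by pairing with $\psi=(I-\op{L}_A)^{-1}\varphi$. The quadratic remainder you get for free from the analyticity of $G$ into $\fspace{X}(\Omega)^*$. Both proofs ultimately lean on Corollary~\ref{coro:analytic} (the paper for local Lipschitzness, you for the Taylor remainder), so neither is circular. Your approach is shorter and more conceptual; the paper's iterative computation is more explicit and, as a by-product, yields the finite-difference estimate \eqref{eq:int3} for \emph{any} two normalized potentials $A,B$, not only along a one-parameter family --- a slightly stronger intermediate statement that it does not actually need here.
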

Writing $G_\varphi$ the composition of the evaluation at $\varphi$ and
the Gibbs map, i.e. $G_\varphi(A)=\int\varphi\,\dd\mu_A$,
the above formula can be recast as:
\[D (G_\varphi)_A(\zeta)
 = \int (I-\op{L}_A)^{-1}(\varphi) \cdot \zeta \,\dd\mu_A
  \qquad\mbox{ when }A\in\fspace{N},\, \zeta\in\ker\op{L}_A, \,
  \varphi\in\ker\mu_A.\]

Observe that using the series expression of $(I-\op{L}_A)^{-1}$,
that $\mu_A$ is fixed by $\op{L}_A^\ast$ and that the transfer operator
is a left-inverse to the composition operator, this also rewrites as
\[D (G_\varphi)_A(\zeta)
 = \sum_{i=0}^{+\infty} \int \varphi \cdot \zeta\circ T^i \,\dd\mu_A\]
This version has the advantage that it applies to
test functions $\varphi$ not necessarily in $\ker\mu_A$,
because $\zeta\in\ker\op{L}_A$ implies $\zeta\in\ker\mu_A$
and adding a constant to $\varphi$ does therefore
not change the value of the integrals.

We can rephrase Theorem \ref{theo:affine-derivative}
in a similar way, which will be
used in the sequel to define a metric on $\fspace{X}(\Omega)$.
\begin{coro}\label{coro:affine-derivative}
For all $A,\zeta,\varphi \in\fspace{X}(\Omega)$, if
$\zeta$ is small enough we have
\begin{multline*}
\int \varphi \,\dd\mu_{A+\zeta} - \int\varphi \,\dd\mu_A =\\
  \int \varphi_A \zeta\,\dd\mu_A + \sum_{i=1}^\infty
  \int \big( \varphi_A \cdot \zeta\circ T^i
  +\varphi_A\circ T^i \cdot \zeta\big) \,\dd\mu_A
   + O(\lVert\varphi\rVert \lVert \zeta\rVert^2)
\end{multline*}
where the above sum converges and defines a continuous bilinear form.
\end{coro}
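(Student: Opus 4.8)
The plan is to take Theorem~\ref{theo:affine-derivative} as given, so that it only remains to rewrite its main term
\[\int (I-\op{L}_{N(A)})^{-1}(\varphi_A)\cdot DN_A(\zeta)\,\dd\mu_A\]
in the announced symmetric form. Write $B:=N(A)$; then $\mu_B=\mu_A$, the operator $\op{L}_B$ is a left-inverse to the composition operator, $\op{L}_B^\ast\mu_A=\mu_A$, and $DN_A(\zeta)\in T_B\fspace{N}=\ker\op{L}_B\subset\ker\mu_A$ (Theorems~\ref{theo:normalized} and~\ref{theo:Nmap}, together with the inclusion $\ker\op{L}_B\subset\ker\mu_B$ established in Proposition~\ref{prop:LC}). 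Put $\psi:=(I-\op{L}_B)^{-1}(\varphi_A)=\sum_{k\ge 0}\op{L}_B^k(\varphi_A)$, which is legitimate because $\varphi_A\in\ker\mu_A=\ker\mu_B$ and by the Neumann-series formula for $(I-\op{L}_B)^{-1}$ on $\ker\mu_B$ established in Section~\ref{sec:normalizing}; note also $\psi\in\ker\mu_A$ since every $\op{L}_B^k(\varphi_A)$ is and $\ker\mu_A$ is closed.

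The computation then rests on two elementary adjunction identities, both consequences of the pull-out relation $\op{L}_B(f\cdot(g\circ T))=g\cdot\op{L}_B(f)$ together with $\op{L}_B^\ast\mu_A=\mu_A$: for all $f,g\in\fspace{X}(\Omega)$ and $k\ge0$,
\[\int \op{L}_B^k(f)\cdot g\,\dd\mu_A=\int f\cdot(g\circ T^k)\,\dd\mu_A,\]
and (``integration by parts'') for all $h,g\in\fspace{X}(\Omega)$,
\[\int h\cdot(g-g\circ T)\,\dd\mu_A=\int (I-\op{L}_B)(h)\cdot g\,\dd\mu_A.\]
Next I would invoke item~(\ref{enumi:coro1}) of Proposition~\ref{prop:LC}: since $DN_A$ is the projection onto $\ker\op{L}_B$ along $\fspace{C}$, the element $\zeta-DN_A(\zeta)$ lies in $\fspace{C}$ and decomposes uniquely as $g-g\circ T+c$ with $c=\int\zeta\,\dd\mu_A$ and $g=\op{M}_B(\zeta-DN_A(\zeta))=\op{M}_B(\zeta)$; the last equality holds because $\op{M}_B$ is linear and annihilates $\ker\op{L}_B$ (for $\eta\in\ker\op{L}_B$ one has $\eta_B=\eta$ and $\op{L}_B^k\eta=0$ for $k\ge1$, so $\op{M}_B\eta=0$). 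Hence $DN_A(\zeta)=\zeta-(g-g\circ T)-c$ with $g=\op{M}_B(\zeta)=-\sum_{k\ge1}\op{L}_B^k(\zeta_B)$.

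Then I would expand $\int\psi\cdot DN_A(\zeta)\,\dd\mu_A=\int\psi\zeta\,\dd\mu_A-\int\psi\cdot(g-g\circ T)\,\dd\mu_A-c\int\psi\,\dd\mu_A$, the last term vanishing since $\psi\in\ker\mu_A$. Applying the first adjunction identity termwise to $\psi=\sum_{k\ge0}\op{L}_B^k(\varphi_A)$ gives $\int\psi\zeta\,\dd\mu_A=\sum_{k\ge0}\int\varphi_A\cdot(\zeta\circ T^k)\,\dd\mu_A$; the second identity gives $\int\psi\cdot(g-g\circ T)\,\dd\mu_A=\int(I-\op{L}_B)(\psi)\cdot g\,\dd\mu_A=\int\varphi_A\cdot g\,\dd\mu_A$, and then expanding $g=-\sum_{k\ge1}\op{L}_B^k(\zeta_B)$ and using the first identity once more — together with $\int\varphi_A\circ T^k\,\dd\mu_A=\int\varphi_A\,\dd\mu_A=0$, which allows replacing $\zeta_B$ by $\zeta$ — yields $\int\varphi_A\cdot g\,\dd\mu_A=-\sum_{k\ge1}\int(\varphi_A\circ T^k)\cdot\zeta\,\dd\mu_A$. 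Assembling these and splitting off the $k=0$ term gives exactly
\[\int\psi\cdot DN_A(\zeta)\,\dd\mu_A=\int\varphi_A\zeta\,\dd\mu_A+\sum_{i=1}^\infty\int\big(\varphi_A\cdot(\zeta\circ T^i)+(\varphi_A\circ T^i)\cdot\zeta\big)\,\dd\mu_A,\]
which, combined with Theorem~\ref{theo:affine-derivative}, is the claimed expansion. It remains to justify convergence and continuity: by the exponential decay of correlations following from the spectral gap \ref{hypo:RPFc} applied to $B=N(A)$ (see also Remark~\ref{rema:gap}), each of $\int\varphi_A\cdot(\zeta\circ T^i)\,\dd\mu_A$ and $\int(\varphi_A\circ T^i)\cdot\zeta\,\dd\mu_A=\int\zeta_A\cdot(\varphi_A\circ T^i)\,\dd\mu_A$ is bounded in absolute value by a constant multiple of $(1-\delta)^i\lVert\varphi\rVert\,\lVert\zeta\rVert$, after using hypothesis~\ref{hypo:Banach} to control $\lVert\varphi_A\rVert$ and $\lVert\zeta_A\rVert$ by constant multiples of $\lVert\varphi\rVert$ and $\lVert\zeta\rVert$; hence the series converges geometrically and the bilinear map it defines is continuous (this is also what licenses the termwise manipulations above, the Neumann series for $(I-\op{L}_B)^{-1}$ and $\op{M}_B$ converging in operator norm). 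The only delicate points — the closest thing to an obstacle here — are organizational: pinning down $g=\op{M}_B(\zeta)$ through the uniqueness in Proposition~\ref{prop:LC}, and keeping careful track of the zero-mean corrections so that every exchange of sum and integral is legitimate.
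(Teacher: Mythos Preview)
Your proof is correct and follows essentially the same route as the paper's: reduce to $B=N(A)$, expand $(I-\op{L}_B)^{-1}(\varphi_A)$ as a Neumann series, decompose $DN_A(\zeta)$ via Proposition~\ref{prop:LC} to identify the coboundary part in terms of $\op{M}_B(\zeta)$, and then use the adjunction identity $\int\op{L}_B^k(f)\cdot g\,\dd\mu_A=\int f\cdot(g\circ T^k)\,\dd\mu_A$ together with the zero-mean corrections to arrive at the symmetric series. Your one-line ``integration by parts'' $\int\psi\,(g-g\circ T)\,\dd\mu_A=\int(I-\op{L}_B)(\psi)\cdot g\,\dd\mu_A=\int\varphi_A\, g\,\dd\mu_A$ is a slight streamlining of the paper's explicit telescoping, but the argument is otherwise the same.
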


\subsection{The case of a pair of normalized potentials}

To obtain Theorem \ref{theo:affine-derivative-normalized},
thanks to the regularity of the normalization map proved
in the previous section, we are mostly reduced to estimate
$\int\varphi\,\dd(\mu_B-\mu_A)$ when $\varphi\in\fspace{X}(\Omega)$
is fixed and $A,B$ are \emph{normalized} potentials.
Up to adding a constant to $\varphi$, which does not change
the value of the above integral, we assume that $\varphi\in\ker\mu_A$.

We first write (using that $\mu_A$ and $\mu_B$ are respectively fixed
by $\op{L}_A^\ast$ and $\op{L}_B^\ast$)
\begin{align}
\int\varphi\,\dd(\mu_B-\mu_A)
  &= \int \op{L}_B(\varphi) \,\dd\mu_B - \int\op{L}_A(\varphi) \,\dd\mu_A
      \nonumber\\
  &= \int \big(\op{L}_B(\varphi)-\op{L}_A(\varphi)\big) \,\dd\mu_B
     + \int \op{L}_A(\varphi) \,\dd(\mu_B-\mu_A)
     \label{eq:int1}
\end{align}

Then, we observe
\begin{align*}
\big(\op{L}_B(\varphi)-\op{L}_A(\varphi)\big)(x)
  &= \sum_{T(y)=x} e^{A(y)}\varphi(y) \big(e^{B(y)-A(y)}-1\big) \\
  &= \op{L}_A\big(\varphi (e^{B-A}-1)\big)(x),
\end{align*}
so that writing $R(x)=e^x-1-x \sim \frac{1}{2} x^2$, we get
\[\op{L}_B(\varphi)-\op{L}_A(\varphi) = \op{L}_A(\varphi\cdot(B-A))
   + \op{L}_A(\varphi\cdot R(B-A)).\]
Thus:
\begin{align}
\int\varphi\,\dd(\mu_B-\mu_A)
  &= \int \op{L}_A(\varphi\cdot(B-A)) \,\dd\mu_B
     +\int \op{L}_A(\varphi\cdot R(B-A)) \,\dd\mu_B \nonumber\\
  &\qquad   + \int \op{L}_A(\varphi) \,\dd(\mu_B-\mu_A) \nonumber\\
  &= \int \op{L}_A(\varphi\cdot(B-A)) \,\dd\mu_A
    + \int \op{L}_A(\varphi\cdot(B-A)) \,\dd(\mu_B-\mu_A) \nonumber\\
  &\qquad  +\int \op{L}_A(\varphi\cdot R(B-A)) \,\dd\mu_B
       + \int \op{L}_A(\varphi) \,\dd(\mu_B-\mu_A) \nonumber\\
\int\varphi\,\dd(\mu_B-\mu_A)
  &= \int \varphi\cdot(B-A) \,\dd\mu_A
    + \int \op{L}_A(\varphi) \,\dd(\mu_B-\mu_A) \nonumber\\
  & \qquad + \op{I}(\varphi,B) \label{eq:int2}
\end{align}
where
$\op{I}(\varphi,B)=\int \op{L}_A(\varphi\cdot(B-A)) \,\dd(\mu_B-\mu_A)
   +\int \op{L}_A(\varphi\cdot R(B-A)) \,\dd\mu_B$, which
is linear in $\varphi$ and which we now aim at bounding by a multiple
of $\lVert\varphi\rVert \lVert B-A\rVert^2$.

A first tool is the regularity of $G$.
\begin{lemm}
The map $G:\fspace{X}(\Omega)\to\fspace{X}(\Omega)^*$
is locally Lipschitz: for all $A\in\fspace{X}(\Omega)$
there exist a neighborhood $U\in\fspace{X}(\Omega)$ of $A$
and a constant $C$ such that, for all $\varphi\in\fspace{X}(\Omega)$
and all $B\in U$, it holds:
\[\Big| \int\varphi \, \dd(\mu_B-\mu_A) \Big| \le C \lVert \varphi\rVert
  \lVert B-A \rVert.\]
\end{lemm}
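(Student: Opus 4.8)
The statement to prove is that $G:\fspace{X}(\Omega)\to\fspace{X}(\Omega)^*$ is locally Lipschitz, with the Lipschitz estimate uniform over test functions $\varphi$. The plan is to reduce to the case of \emph{normalized} potentials, exploit the analyticity (hence local Lipschitz continuity) of the normalization map $N$ established in Theorem \ref{theo:Nmap}, and then estimate $\int\varphi\,\dd(\mu_B-\mu_A)$ directly for normalized $A,B$ using the spectral gap. Concretely, since $\mu_A=\mu_{N(A)}$ and $N$ is locally Lipschitz, it suffices to fix $A$ normalized and bound $|\int\varphi\,\dd(\mu_B-\mu_A)|$ by $C\lVert\varphi\rVert\,\lVert B-A\rVert$ for $B$ normalized in a neighbourhood of $A$ (the general bound follows by composing with $N$ and absorbing its Lipschitz constant). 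As usual we may also assume $\varphi\in\ker\mu_A$, since adding a constant changes neither side.

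For normalized $A,B$ I would start from the identity \eqref{eq:int1}, i.e.
\[\int\varphi\,\dd(\mu_B-\mu_A)
  = \int \big(\op{L}_B(\varphi)-\op{L}_A(\varphi)\big)\,\dd\mu_B
    + \int \op{L}_A(\varphi)\,\dd(\mu_B-\mu_A),\]
and rewrite $\op{L}_B(\varphi)-\op{L}_A(\varphi)=\op{L}_A(\varphi(e^{B-A}-1))$. Since $x\mapsto e^x-1$ is analytic with a bound $|e^x-1|\le C'|x|$ for $|x|$ bounded, \ref{hypo:Banach} gives $\lVert \varphi(e^{B-A}-1)\rVert\le C''\lVert\varphi\rVert\,\lVert B-A\rVert$ on a neighbourhood of $A$; continuity of $\op{L}_A$ and of integration against $\mu_B$ then bounds the first term by a multiple of $\lVert\varphi\rVert\,\lVert B-A\rVert$. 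The second term requires iterating: set $\varphi_0=\varphi$ and $\varphi_{n+1}=\op{L}_A(\varphi_n)$; unfolding \eqref{eq:int1} $n$ times produces $\int\varphi\,\dd(\mu_B-\mu_A)=\sum_{k=0}^{n-1}\int\op{L}_A(\varphi_k(e^{B-A}-1))\,\dd\mu_B + \int\varphi_n\,\dd(\mu_B-\mu_A)$. Because $A$ is normalized and $\varphi\in\ker\mu_A$, the spectral gap \ref{hypo:RPFc} gives $\lVert\varphi_n\rVert=\lVert\op{L}_A^n(\varphi)\rVert\le D(1-\delta)^n\lVert\varphi\rVert\to 0$, so the remainder vanishes and, using boundedness of $\mu_B$ as a linear form (from \ref{hypo:Banach}) together with the bound $\lVert\op{L}_A(\varphi_k(e^{B-A}-1))\rVert\le C'''(1-\delta)^k\lVert\varphi\rVert\,\lVert B-A\rVert$, the geometric series converges to something bounded by $C\lVert\varphi\rVert\,\lVert B-A\rVert$.

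The main obstacle is bookkeeping uniformity: one must check that the constants $C',C'',\dots$ can be chosen uniformly over $\varphi\in\fspace{X}(\Omega)$ and over $B$ in a fixed neighbourhood of $A$, and in particular that the spectral-gap constants $D,\delta$ for $\op{L}_A$ do not degenerate — here $A$ is fixed, so $D,\delta$ are just fixed numbers, which is what makes the geometric summation legitimate. A secondary point is that the $e^{B-A}-1$ factor must stay uniformly bounded in $\fspace{X}(\Omega)$-norm, which is where shrinking the neighbourhood $U$ of $A$ is used. Once these uniformities are in place the estimate is a routine geometric-series argument, and reducing the non-normalized case to the normalized one is immediate from Theorem \ref{theo:Nmap}. (This lemma is precisely the regularity input needed to control the error term $\op{I}(\varphi,B)$ in \eqref{eq:int2} and thereby prove Theorem \ref{theo:affine-derivative-normalized}.)
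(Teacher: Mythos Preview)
Your argument is correct, but it takes a substantially different route from the paper. The paper's proof is a single line: it simply invokes Corollary~\ref{coro:analytic}, which already established that $G:\fspace{X}(\Omega)\to\fspace{X}(\Omega)^*$ is analytic (via $\mu_A=D(\log\Lambda)_A$ and the analyticity of $\log\Lambda$), and analytic maps between Banach spaces are locally Lipschitz by general principles (cf.\ Section~\ref{sec:imp}).

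Your approach instead bypasses Corollary~\ref{coro:analytic} and proves the Lipschitz estimate by hand: reduce to normalized potentials via the analyticity of $N$, then iterate the identity~\eqref{eq:int1} and sum the resulting geometric series using the spectral gap for the fixed normalized $A$. This is sound, and the uniformity bookkeeping you flag is handled exactly as you say (the constants $D,\delta$ are fixed once $A$ is, and $\mu_B$ is bounded as a linear form uniformly in $B$ by~\ref{hypo:Banach}). What it buys is a more self-contained argument that does not rely on the identification $\mu_A=D(\log\Lambda)_A$; what it costs is that you are essentially running, in first-order form, the same iteration the paper performs \emph{immediately after} this lemma to derive~\eqref{eq:int3}. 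So in the paper's architecture your proof is redundant: the analyticity of $G$ is already available for free, and the iteration you perform here will be repeated anyway with the sharper second-order remainder. Still, as a stand-alone proof it is perfectly valid.
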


\begin{proof}
This follows from the analyticity of $G$ obtained in Corollary
\ref{coro:analytic}.
\end{proof}

A second observation is that since $\fspace{X}(\Omega)$ has a
multiplicative norm, we get
\begin{align*}
\lVert R(B-A) \rVert &= \bigg\lVert \sum_{k\ge 2} \frac{1}{k!}(B-A)^k \bigg\rVert \\
  &\le  \sum_{k\ge 2} \frac{1}{k!} \lVert B-A\rVert^k \\
 & =  R(\lVert B-A\rVert) \\
 & \le C'\lVert B-A\rVert^2
\end{align*}
when $B$ is in any fixed neighborhood $U$ of $A$.

Now, since $\lVert\cdot\rVert$
is assumed to control the sup norm and $\mu_B$ is a probability
measure, whenever $B\in U$ it comes
\begin{align*}
\lvert \op{I}(\varphi,B)\rvert
  &\le C |\op{L}_A| \lVert \varphi\rVert \lVert B-A\rVert^2
   + C''|\op{L}_A| \lVert\varphi\rVert \lVert R(B-A)\rVert \\
  &\le C''' \lVert\varphi\rVert \lVert B-A\rVert^2
\end{align*}

Now, applying \eqref{eq:int2} to its own second term repeatedly
and recalling that $\op{L}_A(\varphi)$ goes to zero
thanks to the spectral gap assumption, we get
\begin{align}
\int\varphi\,\dd(\mu_B-\mu_A)
  &= \int \varphi\cdot(B-A) \,\dd\mu_A
    + \int \op{L}_A(\varphi) \,\dd(\mu_B-\mu_A) \nonumber\\
  & \qquad + \op{I}(\varphi,B) \nonumber \\
  &= \int \big(\varphi+\op{L}_A(\varphi)\Big)\cdot (B-A) \,\dd\mu_A
    + \int \op{L}_A^2(\varphi) \,\dd(\mu_B-\mu_A) \nonumber\\
  & \qquad + \op{I}(\varphi+\op{L}_A(\varphi),B) \nonumber \\
  &= \int \big(\sum_{n\ge0}\op{L}_A^n(\varphi)\Big)\cdot (B-A) \,\dd\mu_A
     + \op{I}\Big(\sum_{n\ge0}\op{L}_A^n(\varphi),B\Big) \nonumber \\
  &= \int (I-\op{L}_A)^{-1}(\varphi) \cdot (B-A)\,\dd\mu_A
     +O(\lVert \varphi\rVert \lVert B-A\rVert^2)\label{eq:int3}
\end{align}
which is almost Theorem \ref{theo:affine-derivative-normalized}, except
for the assumption that $B$ is normalized.

\subsection{End of proofs}

\begin{proof}[Proof of Theorem \ref{theo:affine-derivative-normalized}]
Since $N$ is an analytic projection to $\fspace{N}$ (i.e. $N$ restricted
to $\fspace{N}$ is the identity), we have \[N(A+\zeta)=A+\zeta+O(\lVert\zeta\rVert^2)\]
for all $A\in\fspace{N}$ and all small enough
$\zeta\in T_A\fspace{N}=\ker\op{L}_A$,
with an implicit constant only depending on $A$.

Fix $A\in\fspace{N}$, $\zeta\in \ker\op{L}_A$ and $\varphi\in\ker\mu_A$,
and set $B=N(A+\zeta)$.
Using \eqref{eq:int3} with the normalized potentials $A$ and $B$,
we get
\begin{align*}
\int\varphi\,\dd(\mu_{A+\zeta}-\mu_A)
  &= \int\varphi\,\dd(\mu_{A+\zeta}-\mu_B)
    + \int\varphi\,\dd(\mu_B-\mu_A) \\
  &= O(\lVert\varphi\rVert \lVert A+\zeta-N(A+\zeta)\rVert) \\
  &\qquad  + \int (I-\op{L}_A)^{-1}(\varphi)
    \cdot(B-A)\,\dd\mu_A +O(\lVert\varphi\rVert \Vert B-A \rVert^2)\\
  &=\int (I-\op{L}_A)^{-1}(\varphi)\cdot \zeta\,\dd\mu_A
    +O(\lVert \varphi\rVert \lVert\zeta\rVert^2),
\end{align*}
for $\zeta$ small enough, and with an implicit constant that depends
only on $A$.
\end{proof}

\begin{proof}[Proof of Theorem \ref{theo:affine-derivative}]
Let $A,\zeta,\varphi\in \fspace{X}(\Omega)$ be arbitrary. Then
we consider:
\begin{itemize}
\item  $N(A)$, which is the normalized potential such that
$\mu_{N(A)}=\mu_A$,
\item $DN_A(\zeta)$, which is the projection of $\zeta$ on
  $\ker\op{L}_{N(A)}$ in the direction of $\fspace{C}$,
\item $\varphi_A=\varphi-\int\varphi\,\dd\mu_A \in\ker\mu_A$
\end{itemize}
and we apply Theorem \ref{theo:affine-derivative-normalized}
to this new potential, tangent vector, and test function.
We obtain exactly the desired expression once we notice
that
\begin{align*}
\lvert\mu_{A+\zeta}-\mu_{N(A)+DN_A(\zeta)}\rvert
  &= \lvert \mu_{N(A+\zeta)}-\mu_{N(A)+DN_A(\zeta)}\rvert \\
  &= O\big(\lVert N(A+\zeta) - N(A)-DN_A(\zeta) \rVert \big) \\
  &= O(\lVert \zeta\rVert^2).
\end{align*}
\end{proof}

\begin{proof}[Proof of Corollary \ref{coro:affine-derivative}]
We have to rewrite
\[ \int (I-\op{L}_{N(A)})^{-1}\varphi_A \cdot DN_A(\zeta) \,\dd\mu_A.\]
We first observe that the final expression we aim for only involves
$A$ through the measure $\mu_A$, so that we can as well replace $A$
by $N(A)$, i.e. assume that $A$ is normalized (this has for sole
purpose to avoid writing a dozen times $\op{L}_{N(A)}$).

We first write
$(I-\op{L}_A)^{-1}\varphi_A = \sum_{i\ge 0} \op{L}_A^i\varphi_A$,
and recall that $DN_A(\zeta)$ is the projection of $\zeta$ to
$\ker\op{L}_A$ along $\fspace{C}$; this means that there is a function
$g\in\fspace{X}(\Omega)$ such that
\begin{equation}
DN_A(\zeta) = \zeta_A + g-g\circ T
\label{eq:DNA}
\end{equation}
(where $\zeta_A = \zeta - \int \zeta \,\dd\mu_A \in \ker\mu_A$)
and that $\op{L}_A (DN_A(\zeta))=0$.
In particular, we have $\op{M}_A(DN_A(\zeta))=0$; thus,
\[g = \op{M}_A(DN_A(\zeta)-\zeta_A) = -\op{M}_A(\zeta)
  = \sum_{i\ge 1} \op{L}_A^i \zeta_A.\]

This leads us to
\begin{multline*}
\int (I-\op{L}_A)^{-1}\varphi_A \cdot DN_A(\zeta) \,\dd\mu_A \\
  = \int \sum_{i\ge 0} \op{L}_A^i \varphi_A \cdot \zeta_A \,\dd\mu_A
    + \int \sum_{i\ge 0}\op{L}_A^i \varphi_A \cdot g \,\dd\mu_A
  - \int \sum_{i\ge 0}\op{L}_A^i \varphi_A \cdot g\circ T \,\dd\mu_A \\
  = \int \sum_{i\ge 0} \op{L}_A^i \varphi_A \cdot \zeta_A\,\dd\mu_A
    + \int \sum_{i\ge 0}\op{L}_A^i \varphi_A \cdot g \,\dd\mu_A
   - \int \sum_{i\ge 0} \op{L}_A^{i+1} \varphi_A \cdot g \,\dd\mu_A \\
  = \int \sum_{i\ge 0} \op{L}_A^i \varphi_A \cdot \zeta_A\,\dd\mu_A
    + \int \varphi_A \cdot g \,\dd\mu_A \\
  = \int \sum_{i\ge 0} \op{L}_A^i \varphi_A \cdot \zeta_A\,\dd\mu_A
    + \int \varphi_A \cdot \sum_{i\ge 1} \op{L}_A^i \zeta_A \,\dd\mu_A,
\end{multline*}
and, using the invariance of $\mu_A$ under $\op{L}_A^\ast$:
\begin{multline*}
\int (I-\op{L}_A)^{-1}\varphi_A \cdot DN_A(\zeta) \,\dd\mu_A \\
  = \int \varphi_A \zeta_A \,\dd\mu_A
+ \sum_{i\ge 1}
    \int \big( \varphi_A \cdot \zeta_A\circ T^i + \varphi_A\circ T^i \cdot
    \zeta_A \big) \,\dd\mu_A,
\end{multline*}
where the sum converges (exponentially).

Finally, we observe that there is no use normalizing both
$\varphi$ and $\zeta$, since for example
$\int \varphi_A \zeta_A \,\dd\mu_A
= \int \varphi_A \zeta \,\dd\mu_A$. All $\zeta_A$
can therefore be replaced by $\zeta$, and we get the desired formula.
\end{proof}

\section{A Riemannian metric on the space of normalized potentials}
\label{sec:metric}

The goal of this section is to define and to study a (weak) Riemannian metric
on the space of Gibbs measures. More precisely, we construct
a Riemannian metric on the manifold of normalized potentials, which
corresponds equivalently to a Riemannian metric on the quotient space
$\fspace{Q}=\fspace{X}(\Omega)/\fspace{C}$, and relates in various ways
to dynamical quantities. After a conformal rescaling by the metric
entropy, this metric is very closely related to the metric defined by
McMullen \cite{Mac} (see also \cite{BCS} and references therein).

\subsection{Weak and strong inner products on Banach spaces}

Consider a positive symmetric bilinear form $\langle\cdot,\cdot\rangle$ on some
Banach space $\fspace{Y}$. There are two possible definitions
of positive-definiteness.
The first one is a copy and paste of the finite-dimensional definition,
that is, we ask that
\[\forall y \neq 0 \in\fspace{Y}:\quad \langle y,y\rangle >0.\]
In this case, one says that $\langle\cdot,\cdot\rangle$ is weakly
positive-definite.
The second one is to ask that the Banach norm $\lVert\cdot\rVert$
of $\fspace{Y}$ controls $\langle\cdot,\cdot\rangle$ from below, that is,
\[\exists C>0, \forall y\in\fspace{Y}: \quad
  \langle y,y\rangle \ge C\lVert y\rVert^2\]
In this case, one says that $\langle\cdot,\cdot\rangle$ is strongly
positive-definite; note that this condition implies
weak positive-definiteness.

Most of the time, one is only interested in bilinear forms
which are continuous with respect to the Banach topology of
$\fspace{Y}$. But if $\langle\cdot,\cdot\rangle$ is
both continuous and strongly positive-definite, then its
associated norm is equivalent to $\lVert\cdot\rVert$, and in particular
$\fspace{Y}$ must be isomorphic to a Hilbert space.
Therefore, most Banach spaces have no continuous,
strongly positive-definite inner product.

We shall say that $\langle\cdot,\cdot\rangle$ is an \emph{inner product}
if it is continuous and weakly positive-definite, and use
the term \emph{semi-definite inner product} for a merely continuous,
positive semi-definite symmetric bilinear form.
By a \emph{Riemannian metric} on a smooth Banach manifold,
we mean a field of inner products on the tangent spaces,
such that when translated in a chart, the inner product
depends smoothly on the point, that is, it defines a smooth map
from the domain of the chart to the Banach space of
symmetric bilinear forms.

As a last remark, note that when $\langle \cdot,\cdot\rangle$
is an inner product inducing a complete norm, it endows
$\fspace{Y}$ with a second structure of Banach space
(more precisely a Hilbert structure of course). Then the
the identity map $\fspace{Y}\to\fspace{Y}$ is a continuous
bijection between the two Banach structures at hand, and is therefore an
isomorphism. This implies in particular that $\langle \cdot,\cdot\rangle$
is strongly positive-definite.
In other words, inner products which are not strongly positive-definite
induce a norm which is never complete. This means that there will be
a relatively subtle interplay between the topology of $\fspace{Y}$
and the measurements made from $\langle \cdot,\cdot\rangle$.

\subsection{The Variance metric} \label{sec:Var}

Now, we introduce our proposed metric. Its main properties
are summed up in the following result.
\begin{theo}\label{theo:metric}
There exists an analytic map from $\fspace{X}(\Omega)$ to the
space of its continuous symmetric bilinear forms, which
maps any potential $A$ to a semi-definite inner product
$\langle\cdot ,\cdot\rangle_A$ such that:
\begin{enumerate}
\item $\langle\cdot ,\cdot\rangle_A$
restricts to $T_A\fspace{N}$ into an inner product for all
  $A\in\fspace{N}$, thus inducing a Riemannian metric on $\fspace{N}$,
\item this Riemannian metric coincides with
the one obtained from $L^2(\mu_A)$:
\[ \forall A\in\fspace{N}, \forall \eta,\zeta\in T_A\fspace{N}:\quad
  \langle\eta,\zeta\rangle_A = \int \eta \zeta \,\dd\mu_A,\]
\item for all $A$, $\langle\cdot ,\cdot\rangle_A$
  induces a well defined inner product on
  $\fspace{Q}$, thus inducing a Riemannian
  metric on this quotient space,
\item for all $A,\zeta,\varphi\in\fspace{X}(\Omega)$, it holds:
 \[\frac{\dd}{\dd t} \left.\int \varphi \,\dd\mu_{A+t\zeta}\right|_{t=0}
  = \langle\varphi,\zeta\rangle_A,\]
\item for all $A,\zeta\in \fspace{X}(\Omega)$, it holds:
  \[\Var(\zeta_A,\mu_A) := \lim\frac{1}{n} \int \Big(\sum_{i=0}^{n-1}
  \zeta_A\circ T^i \Big)^2 \,\dd\mu_A =  \langle\zeta,\zeta\rangle_A.\]
\end{enumerate}
\end{theo}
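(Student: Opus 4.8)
The plan is to build the bilinear form $\langle\cdot,\cdot\rangle_A$ directly from the derivative of the Gibbs map computed in Corollary \ref{coro:affine-derivative}, and then check each listed property in turn. Concretely, I would \emph{define}
\[
\langle\varphi,\zeta\rangle_A := \int \varphi_A\zeta_A\,\dd\mu_A
  + \sum_{i\ge 1}\int\big(\varphi_A\cdot\zeta_A\circ T^i
  + \varphi_A\circ T^i\cdot\zeta_A\big)\,\dd\mu_A,
\]
which by Corollary \ref{coro:affine-derivative} equals $\frac{\dd}{\dd t}\big|_{t=0}\int\varphi\,\dd\mu_{A+t\zeta}$, giving property (4) for free; symmetry is visible from the formula. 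Analyticity of $A\mapsto\langle\cdot,\cdot\rangle_A$ follows from the analyticity of $G$ (Corollary \ref{coro:analytic}): the second derivative $D^2 G_A$ is itself an analytic map into the space of continuous symmetric bilinear forms, and $\langle\cdot,\cdot\rangle_A$ is, up to the constant-subtraction which is a fixed continuous operation, just $D^2(\log\Lambda)_A$ by Corollary \ref{coro:eigendata} (since $DG_A = D^2(\log\Lambda)_A$ as bilinear forms); continuity of each $\langle\cdot,\cdot\rangle_A$ follows from the exponential decay of the series (spectral gap) together with $\lVert\cdot\rVert$ controlling the sup-norm. For property (5), I would recognize the above expression, when $\varphi=\zeta$, as the standard Green--Kubo formula for the asymptotic variance: expanding $\int(\sum_{i=0}^{n-1}\zeta_A\circ T^i)^2\,\dd\mu_A$ and using $T$-invariance of $\mu_A$ gives $n\int\zeta_A^2\,\dd\mu_A + 2\sum_{i=1}^{n-1}(n-i)\int\zeta_A\cdot\zeta_A\circ T^i\,\dd\mu_A$; dividing by $n$ and letting $n\to\infty$, the Cesàro weights $\tfrac{n-i}{n}\to 1$ and the correlation terms are summable by exponential decay, so the limit is exactly $\langle\zeta,\zeta\rangle_A$.

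Next I would address property (2): when $A$ is normalized and $\eta,\zeta\in T_A\fspace{N}=\ker\op{L}_A$, each $\eta,\zeta$ already lies in $\ker\mu_A$ so $\eta_A=\eta$, $\zeta_A=\zeta$; moreover $\op{L}_A\zeta=0$ kills every correlation term, since $\int\eta\cdot\zeta\circ T^i\,\dd\mu_A = \int\op{L}_A^i(\eta\cdot\zeta\circ T^i)\,\dd\mu_A = \int \op{L}_A^i(\eta)\cdot\zeta\,\dd\mu_A$ — wait, I need the left-inverse identity the other way: $\int\eta\circ T^i\cdot\zeta\,\dd\mu_A = \int\eta\cdot\op{L}_A^i\zeta\,\dd\mu_A$ after $i$ applications of $\op{L}_A^\ast\mu_A=\mu_A$ and the identity $\op{L}_A(f\cdot g\circ T)=g\cdot\op{L}_A(f)$, and $\op{L}_A\zeta=0$ makes this vanish for $i\ge 1$; similarly for the symmetric term. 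Hence only $\int\eta\zeta\,\dd\mu_A$ survives, which is (2). Positive-semidefiniteness of $\langle\cdot,\cdot\rangle_A$ in general then follows from (5): the asymptotic variance $\Var(\zeta_A,\mu_A)$ is a limit of nonnegative quantities, hence $\ge 0$.

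For (1) and the kernel computation, the key point is that $\langle\zeta,\zeta\rangle_A=0$ forces $\zeta\in\fspace{C}$, and conversely $\fspace{C}$ is in the kernel. The easy direction: if $\zeta = g-g\circ T+c$ then $\zeta_A=g-g\circ T$ and $\sum_{i=0}^{n-1}\zeta_A\circ T^i = g - g\circ T^n$ telescopes, so $\tfrac1n\int(g-g\circ T^n)^2\,\dd\mu_A\le \tfrac4n\lVert g\rVert_\infty^2\to 0$, giving $\langle\zeta,\zeta\rangle_A=0$. The reverse direction is where \ref{hypo:support} enters and is the main obstacle: I would argue that $\langle\zeta,\zeta\rangle_A=0$ means the Birkhoff sums of $\zeta_A$ have bounded $L^2(\mu_A)$-norm, i.e. $\zeta_A$ is an $L^2$-coboundary by the standard characterization (the obstruction to solving $\zeta_A = u - u\circ T$ in $L^2(\mu_A)$ vanishes exactly when the variance is zero, via the spectral measure of $\zeta_A$ under the Koopman operator having no atom at $1$ and being integrable against $\tfrac{1}{|1-z|^2}$). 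The subtlety is upgrading this $L^2$-coboundary to a genuine coboundary \emph{within} $\fspace{X}(\Omega)$: here I would instead use the operator-theoretic route, noting $\langle\zeta,\zeta\rangle_A = \int\zeta_A\cdot(I+\op{L}_A)(I-\op{L}_A)^{-1}\zeta_A\,\dd\mu_A$ computed on $\ker\mu_A$ (since $\langle\varphi,\zeta\rangle_A=\int(I-\op{L}_{N(A)})^{-1}\varphi_A\cdot DN_A(\zeta)\,\dd\mu_A$ from Theorem \ref{theo:affine-derivative}), and show this vanishing forces $DN_A(\zeta)=0$, i.e. $\zeta\in\ker DN_A = \fspace{C}$; the positivity needed for this implication is where one must carefully invoke \ref{hypo:support} so that a nonnegative integrand integrating to zero is actually zero. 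Once the kernel is identified as exactly $\fspace{C}$, property (3) is immediate — $\langle\cdot,\cdot\rangle_A$ descends to the quotient $\fspace{Q}=\fspace{X}(\Omega)/\fspace{C}$ as an inner product — and property (1) follows since $T_A\fspace{N}=\ker\op{L}_A$ is a complement to $\fspace{C}$ on which the form is therefore (weakly) positive-definite, with smooth dependence on the point coming from the already-established analyticity. Smoothness in charts for the Riemannian metric statement is then just the restriction of the analytic map $A\mapsto\langle\cdot,\cdot\rangle_A$ to the analytic submanifold $\fspace{N}$.
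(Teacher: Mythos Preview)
Your overall architecture matches the paper's: define $\langle\cdot,\cdot\rangle_A$ by the correlation series from Corollary~\ref{coro:affine-derivative}, identify it with $D^2(\log\Lambda)_A$ for analyticity, verify the $L^2(\mu_A)$ expression on $T_A\fspace{N}$ by killing the cross terms via $\op{L}_A\zeta=0$, and compute the variance by the Green--Kubo expansion. All of that is fine and essentially what the paper does.

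The gap is in the hard direction of the kernel identification, where you want $\langle\zeta,\zeta\rangle_A=0\Rightarrow\zeta\in\fspace{C}$. Your first route (upgrade an $L^2$-coboundary to an $\fspace{X}(\Omega)$-coboundary) you rightly flag as delicate, and in this generality there is no reason it should work. Your second route writes $\langle\zeta,\zeta\rangle_A=\int\zeta_A\cdot(I+\op{L}_A)(I-\op{L}_A)^{-1}\zeta_A\,\dd\mu_A$ and then says one should ``carefully invoke \ref{hypo:support} so that a nonnegative integrand integrating to zero is actually zero''. But that integrand is \emph{not} pointwise nonnegative: it is the product of $\zeta_A$ with a different function, and $(I+\op{L}_A)(I-\op{L}_A)^{-1}$ is not a positivity-preserving operator in any sense that would make this a square. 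So \ref{hypo:support} cannot be applied here, and the argument does not close.

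The paper's fix is much shorter and avoids both detours. Since you have already proved (2) and the easy inclusion $\fspace{C}\subset\ker\langle\cdot,\cdot\rangle_A$, use the topological decomposition $\fspace{X}(\Omega)=\ker\op{L}_{N(A)}\oplus\fspace{C}$ from Proposition~\ref{prop:LC}: write $\zeta=\zeta'+f$ with $\zeta'\in\ker\op{L}_{N(A)}$ and $f\in\fspace{C}$. Then $\langle\zeta,\zeta\rangle_A=\langle\zeta',\zeta'\rangle_A$, and by (2) this equals $\int(\zeta')^2\,\dd\mu_A$. Now the integrand \emph{is} nonnegative, \ref{hypo:support} gives $\zeta'=0$, and $\zeta=f\in\fspace{C}$. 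This is exactly the step where the decomposition $T_A\fspace{N}\oplus\fspace{C}$ earns its keep: it reduces the kernel question to the tangent space, where the form is already known to be the honest $L^2$ inner product.
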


Of course, the metrics in $\fspace{N}$ and $\fspace{X}(\Omega)/\fspace{C}$
correspond one to the other through the natural identification
between these two spaces. There is really only one Riemannian metric, which
can be viewed in two ways. Any of the last two items completely specify
$\langle\cdot ,\cdot\rangle_A$, and can be taken as a definition.
Our point here is that these expressions define the same
bilinear form, inducing an inner product on $T_A\fspace{N}$.

The end of this section is devoted to the proof of Theorem
\ref{theo:metric}.
The first step leading to this result is to observe that
the expression in Corollary \ref{coro:affine-derivative}
is symmetric: in the right-hand side, $\zeta$ and $\varphi$
play the same role (up to normalization, but the formula holds
and was even proved with $\zeta$ also normalized to $\zeta_A$).
This means that the function against which $\mu_A$ is tested
and the direction in which $A$ is moved play precisely the same role
in the evolution of the integral, a somewhat surprising connection
(but which also follows from Corollary \ref{coro:eigendata}
and the Schwarz Lemma).
It also indicates that the right-hand side in Corollary
\ref{coro:affine-derivative} defines a symmetric bilinear form:
given $A\in\fspace{X}(\Omega)$, we define for all
$\eta,\zeta\in\fspace{X}(\Omega)$:
\begin{align*}
\langle \eta,\zeta \rangle_A
  &= \int \eta_A\cdot \zeta_A \,\dd\mu_A
  + \sum_{i\ge 1} \int \big(\eta_A\cdot \zeta_A\circ T^i
  + \eta_A \circ T^i \cdot \zeta_A \big) \,\dd\mu_A \\
  &= \int \eta_A\cdot \zeta_A \,\dd\mu_A
  + \sum_{i\ge 1} \int \big(\op{L}_{N(A)}^i(\eta_A)\cdot \zeta_A
  + \eta_A \cdot \op{L}_{N(A)}^i(\zeta_A) \big) \,\dd\mu_A
\end{align*}
The second expression shows that $\langle \eta,\zeta \rangle_A$
is a well defined number, and that it defines a continuous
symmetric bilinear form on $\fspace{X}(\Omega)$.
It also follows from Section \ref{sec:integral} that
$\langle \cdot,\cdot \rangle_A$ depends continuously on $A$;
in fact the analyticity of $\langle\cdot,\cdot\rangle_A$
follows from Corollary \ref{coro:eigendata} in the same
way as Corollary \ref{coro:analytic}:
we have $\langle\zeta,\eta\rangle_A=D^2(\log\Lambda)_A(\zeta,\eta)$
which depends analytically on $A$.

From Corollary  \ref{coro:affine-derivative}, we see that
\begin{equation}
\langle\eta,\zeta\rangle_A =
\int (I-\op{L}_{N(A)})^{-1}(\eta_A) \cdot DN_A(\zeta) \,\dd\mu_A,
\label{eq:asymmetric-inner}
\end{equation}
which seems asymmetric but will be useful.

As it is, $\langle \cdot,\cdot \rangle_A$ does not define an
inner product, because it is not weakly positive-definite.
\begin{prop}\label{prop:isotropy}
The symmetric form $\langle\cdot,\cdot,\rangle_A$ is
positive semi-definite, and for all $A\in\fspace{N}$ and
$\zeta\in T_A\fspace{N}$ we have
$\langle\zeta,\zeta\rangle_A= \int \zeta^2\,\dd\mu_A$.
Moreover given $A,\zeta \in\fspace{X}(\Omega)$, the following three
statements are equivalent:
\begin{enumerate}
\item\label{enumi:isotropy1} $\langle \zeta,\eta \rangle_A=0$, for all
  $\eta\in \fspace{X}(\Omega)$,
\item\label{enumi:isotropy2} $\langle \zeta,\zeta \rangle_A = 0$,
\item\label{enumi:isotropy3} $\zeta\in \fspace{C}$.
\end{enumerate}
\end{prop}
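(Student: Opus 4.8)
The plan is to first dispose of the two easy assertions and then prove the triangle of equivalences. The claim that $\langle\zeta,\zeta\rangle_A=\int\zeta^2\,\dd\mu_A$ for $A\in\fspace{N}$, $\zeta\in T_A\fspace{N}=\ker\op{L}_A$ is immediate from the series expression: since $\zeta\in\ker\op{L}_A$ we have $\zeta\in\ker\mu_A$, hence $\zeta_A=\zeta$, and every term $\op{L}_A^i(\zeta)$ with $i\ge 1$ vanishes, collapsing the sum to $\int\zeta^2\,\dd\mu_A$. Positive semi-definiteness is then obtained by relating $\langle\zeta,\zeta\rangle_A$ to the asymptotic variance: using the identity \eqref{eq:asymmetric-inner} together with the computation behind Corollary \ref{coro:affine-derivative}, one rewrites $\langle\zeta,\zeta\rangle_A$ as $\lim_n \frac1n\int\big(\sum_{i=0}^{n-1}\zeta_A\circ T^i\big)^2\,\dd\mu_A$ (this is exactly item (5) of Theorem \ref{theo:metric}, whose proof I may invoke, or redo here by telescoping $\sum_i\op{L}_A^i$ against the ergodic sum), which is manifestly $\ge 0$ as the limit of integrals of squares. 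Alternatively, and more cheaply, one projects: $\langle\zeta,\zeta\rangle_A=\langle DN_{N(A)}(\zeta),DN_{N(A)}(\zeta)\rangle_{N(A)}$ because both sides only depend on $A$ through $\mu_A$ and $DN$ kills $\fspace{C}$, and then the previous paragraph gives $\int (DN_A\zeta)^2\,\dd\mu_A\ge 0$.

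For the equivalences, the cleanest route is $\ref{enumi:isotropy3}\Rightarrow\ref{enumi:isotropy1}\Rightarrow\ref{enumi:isotropy2}\Rightarrow\ref{enumi:isotropy3}$. The first implication is trivial from item (4) of Theorem \ref{theo:metric}: if $\zeta=g-g\circ T+c\in\fspace{C}$ then $\mu_{A+t\zeta}=\mu_A$ for all $t$, so $\frac{\dd}{\dd t}\big|_{0}\int\eta\,\dd\mu_{A+t\zeta}=\langle\eta,\zeta\rangle_A=0$ for every $\eta$; one can also see it directly from the bilinear formula, since $\zeta_A=g-g\circ T$ and the telescoping of $\op{L}_A^i(g-g\circ T)$ against $\mu_A$ produces zero. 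The implication $\ref{enumi:isotropy1}\Rightarrow\ref{enumi:isotropy2}$ is the trivial specialization $\eta=\zeta$.

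The substantive implication is $\ref{enumi:isotropy2}\Rightarrow\ref{enumi:isotropy3}$, and this is where I expect the only real work. Reducing via $\zeta\mapsto DN_{N(A)}(\zeta)$ and replacing $A$ by $N(A)$, it suffices to show: if $A\in\fspace{N}$, $\zeta\in\ker\op{L}_A$ and $\int\zeta^2\,\dd\mu_A=0$, then $\zeta=0$ (the original $\zeta$ then differs from $0$ by an element of $\fspace{C}$, giving $\ref{enumi:isotropy3}$). This is precisely where hypothesis \ref{hypo:support} enters — it is the first place in the paper it is needed, consistent with the remark that \ref{hypo:support} is assumed "from Section \ref{sec:metric} on". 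Indeed $\zeta^2\in\fspace{X}(\Omega)$ is non-negative with $\int\zeta^2\,\dd\mu_A=0$, so \ref{hypo:support} forces $\zeta^2=0$, hence $\zeta=0$. The main obstacle, then, is not difficulty but making sure the reduction to the normalized case is airtight: one must check that $\langle\zeta,\zeta\rangle_A=\langle DN_{N(A)}(\zeta),DN_{N(A)}(\zeta)\rangle_{N(A)}$ and that $\zeta-DN_{N(A)}(\zeta)\in\fspace{C}$ (the latter is exactly the content of Theorem \ref{theo:Nmap}, since $DN_A$ is the projection onto $\ker\op{L}_{N(A)}$ along $\fspace{C}$), after which $\ref{enumi:isotropy3}$ for $\zeta$ is equivalent to $DN_{N(A)}(\zeta)=0$, i.e. to the normalized statement just proved.
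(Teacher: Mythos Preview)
Your proposal is correct and matches the paper's proof in substance: the paper also reduces to the normalized case, decomposes $\zeta$ as its $\ker\op{L}_A$-component plus an element of $\fspace{C}$ (you phrase this via $DN$, the paper via Proposition \ref{prop:LC}, but these are the same projection), uses \eqref{eq:asymmetric-inner} for $\ref{enumi:isotropy3}\Rightarrow\ref{enumi:isotropy1}$, and invokes \ref{hypo:support} to conclude $\ref{enumi:isotropy2}\Rightarrow\ref{enumi:isotropy3}$. One small caution: avoid citing items (4) and (5) of Theorem \ref{theo:metric}, since that theorem is what this proposition is helping to prove; use instead the already-established Corollary \ref{coro:affine-derivative} and \eqref{eq:asymmetric-inner} directly, and for positive semi-definiteness your ``cheaper'' projection argument (which is exactly what the paper implicitly does) rather than the variance identity.
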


\begin{proof}
First, observe that when $A\in\fspace{N}$ and $\zeta\in T_A\fspace{N}$ we
have $\zeta_A=\zeta$ and $\op{L}_A(\zeta)=0$, so that:
\[\langle \zeta,\zeta \rangle_A
  = \int \zeta_A\cdot \zeta_A \,\dd\mu_A
  + \sum_{i\ge 1} \int \big(\op{L}_A^i(\zeta_A)\cdot \zeta_A
  + \zeta_A \cdot \op{L}_A^i(\zeta_A) \big) \,\dd\mu_A
  = \int \zeta\cdot \zeta \,\dd\mu_A
\]

It is clear that \ref{enumi:isotropy1} implies \ref{enumi:isotropy2},
and \eqref{eq:asymmetric-inner} shows that
\ref{enumi:isotropy3} implies \ref{enumi:isotropy1}. Let us
show that \ref{enumi:isotropy2} implies
\ref{enumi:isotropy3}. Since $\langle \cdot,\cdot\rangle_A$
does not change if we add a constant or a coboundary to $A$
(i.e. it only depends on $\mu_A$), we can assume that $A$ is normalized.

Suppose that $\zeta$ is isotropic, i.e.
$\langle \zeta,\zeta \rangle_A = 0$. By Proposition \ref{prop:LC}, we decompose $\zeta=\zeta'+f$, where $\zeta'\in\ker\op{L}_A$
and $f\in\fspace{C}$.
Then, $\langle \zeta',\zeta' \rangle_A =  \langle \zeta,\zeta \rangle_A$,
since $\fspace{C}$ is in the kernel of $\langle\cdot,\cdot\rangle_A$.
Thus,
we get $0=\langle \zeta',\zeta' \rangle_A=\int \zeta'^2 \,\dd\mu_A$.
It follows that $\zeta'=0$ and $\zeta=f\in\fspace{C}$.
\end{proof}
The last line of this proof is where we use \ref{hypo:support}.

\begin{defi}
Let $A\in\fspace{X}(\Omega)$ be any potential and
$[A]\in \fspace{Q}=\fspace{X}(\Omega)/\fspace{C}$ be its class modulo $\fspace{C}$.
For all $[\eta],[\zeta]\in \fspace{Q}$, we define
\[\langle[\eta],[\zeta]\rangle_{[A]} = \langle \eta,\zeta \rangle_A,\]
which is well-defined by Proposition \ref{prop:isotropy}, i.e. does not depend
on the chosen representatives in each class. If $A\in \fspace{N}$, we
still write $\langle \cdot , \cdot \rangle_A$ for the restriction of this inner product to $T_A\fspace{N}$. Proposition \ref{prop:isotropy} shows that both these products
are weakly positive-definite, and thus induce a norm on the Banach space
they are defined on ($\fspace{Q}$ and
$T_A\fspace{N} = \ker\op{L}_A$, respectively). We denote both norms
by $\lVert \cdot \rVert_A$, i.e.
\[\lVert \zeta\rVert_A = \sqrt{\langle \zeta,\zeta \rangle_A},\]
and we use this notation for general $\zeta\in\fspace{X}(\Omega)$.
\end{defi}


Let us now prove the last statement of Theorem
\ref{theo:metric}. As usual, we can define the variance
of a function in $\ker\mu_A$ by
\[\Var(\zeta_A,\mu_A) := \lim\frac{1}{n} \int \Big(\sum_{i=0}^{n-1}
  \zeta_A\circ T^i \Big)^2 \,\dd\mu_A.\]
By direct computation, we obtain
\begin{align*}
\int \Big(\sum_{i=0}^n \zeta_A\circ T^i \Big)^2 \,\dd\mu_A
  &= \sum_{i,j=0}^{n-1}
     \int \zeta_A\circ T^i \cdot \zeta_A\circ T^j \,\dd\mu_A \\
  &= n\int \zeta_A^2\,\dd\mu_A + 2\sum_{0\le i<j\le n-1} \int \zeta_A
    \cdot \zeta_A\circ T^{j-i} \,\dd\mu_A \\
  &= n\int \zeta_A^2\,\dd\mu_A + 2\sum_{k=1}^{n-1} (n-k)\int \zeta_A
    \cdot \zeta_A\circ T^k \,\dd\mu_A.
\end{align*}
Assume without loss of generality that $A$ is normalized. Then,
\begin{align*}
\frac{1}{n} \int \Big(\sum_{i=0}^{n-1} \zeta_A\circ T^i \Big)^2 \,\dd\mu_A
 &= \int  \zeta_A^2\,\dd\mu_A + 2\sum_{k=1}^{n-1} \int \zeta_A
    \cdot \zeta_A\circ T^k \,\dd\mu_A  \\
  &\qquad -\frac{2}{n} \sum_{k=1}^{n-1}
     \int k\op{L}_A^k(\zeta_A)\cdot\zeta_A \,\dd\mu_A,
\end{align*}
where the last term is bounded in norm by
$O(\frac{\lVert \zeta_A\rVert^2}{n} \sum_{k=1}^\infty k \delta^k)$
with $\delta$ the spectral gap of $\op{L}_A$.

It follows that for all $A,\zeta$ we have
\[
\Var(\zeta_A,\mu_A) = \lVert \zeta \rVert_A ^2
\]
which, as usual in common examples and as follows from Proposition
\ref{prop:isotropy}, vanishes exactly when $\zeta\in\fspace{C}$.

This concludes the proof of Theorem \ref{theo:metric}, and of Theorem
\ref{theomain:metric} once one observes that
\[D^2(\log\Lambda)_A = D G_A = \langle\cdot,\cdot\rangle_A.\]

\begin{rema}\label{rema:straight}
We have thus recovered in our setting the convexity of $\log\Lambda$,
which will be given -- as is customary-- an interpretation in term of ``pressure'' below.

A notable consequence of this is
that the submanifold $\fspace{N}$ does not
contain any straight interval: it is contained in the zero level set
of $\log\Lambda$, so that any straight interval in $\fspace{N}$
passing through $A$ would have its direction in
$T_A\fspace{N}$ and in the kernel of $\langle\cdot,\cdot\rangle_A$,
whose intersection is trivial.
\end{rema}

\section{Regularity of the Gibbs map: the Wasserstein structure} \label{sec:Was}

The development of optimal transportation and more
precisely of the $2$-Wasserstein distance has let an alternative
differential structure for the set $\spacem{P}(\Omega)$
emerge, notably driven by the work of Otto \cite{Otto}, Benamou and Brenier \cite{BB} and Ambrosio, Gigli and Savar\'e \cite{AGS}.
We shall rely on the formulation given by \cite{Gigli:structure}, which allows
to define the differentiability of a map at a point (as opposed to
more global notions, such as speed vectors defined almost everywhere).
One could in principle consider the case when $\Omega$ is a
Riemannian manifold, but for simplicity we shall restrict to $\Omega=\mathbb{S}^1=\mathbb{R}/\mathbb{Z}$ throughout this section.

\subsection{Elements of optimal transportation}

We will not give much details on optimal transportation, but many
references are available (e.g. \cite{Villani2} for a comprehensive
source). Let us say that the $2$-Wasserstein distance $W_2$
is a metric compatible with the weak topology, defined on
$\spacem{P}(\Omega)$ as the least cost
needed to move one measure to another, when the cost to move a unit of
mass is proportional to the \emph{squared} distance between the starting
point and the stopping point.

For each $\mu\in \spacem{P}(\mathbb{S}^1)$,
Gigli introduces a tangent space $T_\mu\spacem{P}(\mathbb{S}^1)$ which
may be only a metric cone, but turns out to be a Hilbert space in a number
of cases. There are several possible definitions of such a tangent space
(or cone), e.g. in term of geodesics, in term of measures on the tangent
bundle, or in term of vector fields on the manifold; the work of Gigli
ties all these points of view together when $\mu$ belongs to a certain class of
``nice'' measures.
In the present one-dimensional case, the relevant class
to be considered is the set of atomless measures. Assuming
$\mu\in\spacem{P}(\mathbb{S}^1)$ has no atom, one can consider as
tangent space to $\spacem{P}(\mathbb{S}^1)$ at $\mu$ the space
\[T_\mu\spacem{P}(\mathbb{S}^1) :=
 L^2_\nabla(\mu):= \overline{\{\nabla f \,|\,
 f\in C^\infty(\mathbb{S}^1,\mathbb{R}) \}}^{L^2(\mu)}\]
of vector fields on $\mathbb{S}^1$ which are square-integrable
with respect to $\mu$ and which are limits of gradients
of smooth function in $L^2(\mu)$ (note that the quotient
structure of $\mathbb{S}^1=\mathbb{R}/\mathbb{Z}$ makes it possible
to identify all tangent spaces of $\mathbb{S}^1$ with $\mathbb{R}$,
so that we can see vector fields as functions, and $\nabla f$ is
simply $f'$).
There is an obvious exponential map: given $\mu$ and $v\in
T_\mu\spacem{P}(\mathbb{S}^1)$ one sets
$\exp_\mu(v)= (\Id+v)_\#\mu$, i.e. the mass at any point
$x\in\mathbb{S}^1$ is moved to $x+v(x)\mod 1$.
Then for each
$v\in T_\mu\spacem{P}(\mathbb{S}^1)$,
one gets an exponential curve
$(\exp_\mu(tv))_{t\in [0,\varepsilon)}$ which has the property that
\[W_2(\mu,\exp_\mu(tv)) = t\lVert v\rVert_\mu + o(t)\]
where $\lVert v\rVert_\mu$ is the $L^2(\mu)$-norm of $v$
(here the fact that $v$ can be approximated by gradients is crucial).

We will say that a curve $t\mapsto\mu_t$ from an interval to
$\spacem{P}(\mathbb{S}^1)$ is Wasserstein-differentiable at $t_0$
with tangent vector $v\in T_{\mu_{t_0}} \spacem{P}(\mathbb{S}^1)$
whenever it holds
\[W_2(\mu_{t_0+h},\exp_{\mu_{t_0}}(hv)) = o(h).\]
Similarly, a map
$H:\fspace{Y}\to \spacem{P}(\mathbb{S}^1)$ from a Banach space to the set of
probability measures on $\mathbb{S}^1$ is
Wasserstein-differentiable at a point $A\in\fspace{Y}$ in a direction
$\zeta\in\fspace{Y}$ whenever there exist $v\in T_\mu\spacem{P}(\mathbb{S}^1)$
such that
\[W_2\Big(H(A+t\zeta),\exp_{H(A)}(tv)\Big) = o(t)\]
i.e. the tangent vector $v$ describes the first-order variations of $H$
in the Wassertein distance.
Of course, one can define more stringent versions of this definition
(Fr\'echet-like rather than G\^ateaux-like), but since our result is
negative we get the strongest statement by sticking to the weakest definition.

When $\Omega$ is a manifold, in each of its variations
(G\^ateaux or Fr\'echet), Wasserstein
differentiablity is stronger than the corresponding variation of
affine differentiability because of the \emph{continuity equation} below;
roughly, affine differentiability is about recording the ``vertical''
variations of the measure, i.e. the variation of weight it gives
to any given set, while Wasserstein differentiability is about
recording the ``horizontal'' variations of the measure, i.e. how one
should move the mass in the most economical way in order to obtain the
given change of measure. The physical principle of mass preservation
leads to the continuity equation, which in the present case
$\Omega=\mathbb{S}^1$ has the following form:
\begin{lemm}
Assume that $(\mu_t)_t$ is a curve of probability measures on $\mathbb{S}^1$
which is differentiable at $0$ with tangent vector
$v\in T_{\mu_0} \spacem{P}(\mathbb{S}^1)$, then for all smooth
function $\varphi$ we have
\[\frac{\dd}{\dd t} \int \varphi \,\dd\mu_t \Big|_{t=0}= \int \varphi' v \,\dd\mu_0.\]
\end{lemm}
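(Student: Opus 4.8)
The plan is to compare the curve $(\mu_t)$ directly with the exponential curve $t\mapsto\exp_{\mu_0}(tv)=(\Id+tv)_\#\mu_0$, using the definition of Wasserstein-differentiability to transfer the computation to the latter, where everything is explicit.

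First I would reduce to the exponential curve. Since $\varphi$ is smooth on the compact circle, it is Lipschitz; writing $L=\mathrm{Lip}(\varphi)$, Kantorovich--Rubinstein duality (or merely $W_1\le W_2$) gives
\[\Big|\int\varphi\,\dd\mu_h-\int\varphi\,\dd(\exp_{\mu_0}(hv))\Big|\le L\,W_1\big(\mu_h,\exp_{\mu_0}(hv)\big)\le L\,W_2\big(\mu_h,\exp_{\mu_0}(hv)\big)=o(h),\]
the last equality being exactly the hypothesis that $(\mu_t)$ is Wasserstein-differentiable at $0$ with tangent vector $v$. Hence $\frac{\dd}{\dd t}\big|_{t=0}\int\varphi\,\dd\mu_t$ exists and equals $\frac{\dd}{\dd t}\big|_{t=0}\int\varphi\,\dd(\exp_{\mu_0}(tv))$ as soon as the latter does.

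Next I would expand along the exponential curve. By definition of the pushforward,
\[\int\varphi\,\dd(\exp_{\mu_0}(hv))-\int\varphi\,\dd\mu_0=\int\big(\varphi(x+hv(x))-\varphi(x)\big)\,\dd\mu_0(x),\]
where $+$ is the group law on $\mathbb{S}^1$ and $\varphi'$ denotes the derivative under the canonical identification of the tangent bundle of $\mathbb{S}^1$ with $\mathbb{S}^1\times\mathbb{R}$. The integral form of Taylor's formula gives $\varphi(x+hv(x))-\varphi(x)=h\,v(x)\int_0^1\varphi'(x+shv(x))\,\dd s$, so that
\[\frac1h\Big(\int\varphi\,\dd(\exp_{\mu_0}(hv))-\int\varphi\,\dd\mu_0\Big)=\int v(x)\int_0^1\varphi'\big(x+shv(x)\big)\,\dd s\,\dd\mu_0(x).\]
The integrand is bounded by $\lVert\varphi'\rVert_\infty|v(x)|$, which is $\mu_0$-integrable because $v\in L^2(\mu_0)\subset L^1(\mu_0)$ ($\mu_0$ being a probability measure), and as $h\to0$ it converges pointwise to $\varphi'(x)v(x)$ by continuity of $\varphi'$. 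Dominated convergence yields $\int\varphi' v\,\dd\mu_0$, which combined with the previous step proves the lemma.

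The only genuine obstacle is that $v$ is merely an $L^2(\mu_0)$-limit of smooth gradients and need not be bounded, so one cannot expand $\varphi(x+hv(x))$ by a crude Taylor estimate with an $O(h^2)$ remainder uniform in $x$. Using the integral remainder form of Taylor's theorem turns this into a dominated-convergence argument requiring only $v\in L^1(\mu_0)$ and uniform continuity of $\varphi'$ on the circle; beyond that, the proof is routine.
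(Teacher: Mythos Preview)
Your argument is correct. The reduction to the exponential curve via the Lipschitz bound $\big|\int\varphi\,\dd\mu-\int\varphi\,\dd\nu\big|\le L\,W_1(\mu,\nu)\le L\,W_2(\mu,\nu)$ together with the defining $o(h)$ estimate is exactly the right move, and the second step is a clean dominated-convergence computation; your care with the integral remainder (rather than a uniform $O(h^2)$ bound) to accommodate merely $L^2$ tangent vectors $v$ is well placed.

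As for comparison with the paper: the paper does not actually prove this lemma in the text but defers to \cite{Kl2}, saying only that the proof ``is very simple''. Your two-step argument (transfer to the exponential curve via Kantorovich--Rubinstein, then Taylor expansion under the pushforward) is the standard and presumably intended route, so there is nothing substantive to contrast.
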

The most common version of the continuity equation is stated
for curves of measures, with the above equality integrated over
time. The proof of the present version is very simple and
can be found in \cite{Kl2}.

A curve $(x_t)_{t\in I}$ in a metric space is said to be
\emph{absolutely continuous} whenever there is a positive function
$g\in L^1(I)$ such that for all $t_0,t_1\in I$:
\[d(x_{t_0},x_{t_1}) \le \int_{t_0}^{t_1} g(s) \,\dd s\]
(note when considering a curve $(\mu_t)$ in $\spacem{P}(\Omega)$,
that this notion as nothing to do with each
measure $\mu_t$ being absolutely continuous or not!)
In other words, an absolutely continuous curve is a curve whose speed
exists almost everywhere and is integrable. A particular case
is given by Lipschitz curves, whose speed
is in $L^\infty$; absolute continuity is therefore a very mild regularity
condition. A Rademacher theorem holds in this setting:
an absolutely continuous curve in $\spacem{P}(\mathbb{S}^1)$ endowed with
the $2$-Wasserstein distance is differentiable at almost every time
and satisfies the mean value theorem (see \cite{AGS}).

\subsection{Roughness of the Gibbs map in the Wasserstein space}

We are now in a position to state and prove the main
result of this section, which shows that the Gibbs map is
very far from being Wassertein-smooth.
\begin{theo}\label{theo:roughness}
Assume $T$ is $x\mapsto dx \mod 1$ acting on $\mathbb{S}^1$ and
$\fspace{X}(\mathbb{S}^1)$ is the space of $\alpha$-H\"older functions
for some $\alpha\in(0,1]$.
If $(A_t)_t$ is any smooth curve in $\fspace{X}(\mathbb{S}^1)$,
then its image curve $(\mu_{A_t})_t$ under the Gibbs map
is not (even locally) absolutely continuous in $(\spacem{P}(\mathbb{S}^1),W_2)$
unless it is constant (i.e. unless $A_t\in A_0+\fspace{C}$ for all
$t$).
\end{theo}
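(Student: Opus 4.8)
The plan is to argue by contradiction: suppose $(\mu_{A_t})$ is absolutely continuous but non-constant in $(\spacem{P}(\mathbb{S}^1), W_2)$. Since each Gibbs measure $\mu_A$ is equivalent to Lebesgue measure (for $x \mapsto dx \bmod 1$ the eigenfunction $h_A$ is bounded away from $0$ and $\nu_A$ has a H\"older density, so $\mu_A$ is atomless with full support), the measures lie in the ``nice'' class where Gigli's tangent space is the Hilbert space $L^2_\nabla(\mu)$ and the Rademacher-type theorem applies. Hence for a.e. $t$ the curve is Wasserstein-differentiable with some tangent vector $v_t \in L^2_\nabla(\mu_{A_t})$, and by the continuity equation $\frac{\dd}{\dd t}\int\varphi\,\dd\mu_{A_t} = \int \varphi' v_t \,\dd\mu_{A_t}$ for every smooth $\varphi$. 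On the other hand, $(A_t)$ is a smooth curve in $\fspace{X}(\mathbb{S}^1)$, so by Theorem \ref{theo:metric}(4) (equivalently Corollary \ref{coro:affine-derivative}) the affine derivative exists and equals $\langle \varphi, \dot A_t\rangle_{A_t}$. Comparing the two expressions gives, for a.e. $t$ and all smooth $\varphi$,
\[\int \varphi' v_t \,\dd\mu_{A_t} = \frac{\dd}{\dd t}\int \varphi\,\dd\mu_{A_t}.\]

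The heart of the matter is then to show that this forces $v_t = 0$, i.e. that the affine first-order variation of $\mu_{A_t}$ cannot be realized by \emph{transporting mass}, unless the variation is zero. The key structural obstruction is the self-similarity of the Gibbs measure under $T$: since $\mu_A = \op{L}_{N(A)}^*\mu_A$, the density and the variation inherit a recursive scaling relation across the $d$ inverse branches of $T$, which are contractions by the factor $1/d$. A transport vector field $v$ has a fixed ``scale'' (it is a genuine $L^2(\mu)$ vector field, essentially the displacement $\Id + v$), whereas the affine variation $\zeta = \dot A_t$ (mod $\fspace{C}$), being nonzero, produces a change in the measure that is ``rough at all scales'' — it redistributes mass within every branch at every level of the dyadic-type hierarchy. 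Concretely, I would iterate the continuity-equation identity using test functions $\varphi \circ T^n$: since $\int (\varphi\circ T^n)' v_t \,\dd\mu_{A_t} = \int (\varphi' \circ T^n)\cdot (T^n)' \cdot v_t\,\dd\mu_{A_t}$ and $(T^n)' = d^n$, while the left side $\frac{\dd}{\dd t}\int \varphi\circ T^n \,\dd\mu_{A_t} = \frac{\dd}{\dd t}\int \op{L}_{N(A_t)}^n(\varphi\circ T^n)\,\dd\mu_{A_t} = \frac{\dd}{\dd t}\int \varphi\,\dd\mu_{A_t}$ stays bounded (indeed constant), one obtains $d^n \int (\varphi'\circ T^n)\, v_t\,\dd\mu_{A_t} = O(1)$. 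Analyzing the left-hand side via the pushforward $(T^n)_\#(v_t\,\mu_{A_t})$ and the invariance of $\mu_{A_t}$, one shows the integral $\int (\varphi'\circ T^n) v_t\,\dd\mu_{A_t}$ does \emph{not} decay like $d^{-n}$ unless the ``average of $v_t$ over each branch'' vanishes at every scale, which in turn forces $v_t \equiv 0$; but then the curve has zero metric speed a.e., hence is constant, contradicting our assumption.

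The main obstacle I expect is making the ``roughness at all scales'' argument rigorous: one must carefully track how $v_t$ interacts with the transfer operator $\op{L}_{N(A_t)}$ and show that the only vector field compatible with \emph{both} the continuity equation and the self-similarity constraint coming from $T$-invariance is the zero field — equivalently, that $\op{L}_{N(A_t)}(v_t \cdot (\text{something}))$ being forced to match the affine variation at every scale leaves no room unless $\dot A_t \in \fspace{C}$. A clean way to package this is: apply the continuity equation with $\varphi$ ranging over a family adapted to the inverse branches (e.g. functions supported near a branch image), use the explicit branch structure $y_k(x) = (x+k)/d$ to relate $v_t$ at scale $n$ to $v_t$ at scale $n+1$ via a contraction, and conclude that $\|v_t\|_{L^2(\mu_{A_t})}$ would have to be simultaneously of size $O(1)$ and $O(d^{-n})$ for all $n$, hence zero. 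Once $v_t = 0$ for a.e. $t$, the Rademacher/mean-value theorem in $(\spacem{P}(\mathbb{S}^1),W_2)$ (cf. \cite{AGS}) gives $W_2(\mu_{A_{t_0}},\mu_{A_{t_1}}) \le \int_{t_0}^{t_1}\|v_s\|_{\mu_{A_s}}\,\dd s = 0$, so the curve is constant, i.e. $A_t \in A_0 + \fspace{C}$ for all $t$.
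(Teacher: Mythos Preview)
Your overall architecture matches the paper's: assume absolute continuity, invoke the Rademacher-type theorem to get a.e.\ Wasserstein differentiability with tangent $v_t$, compare the continuity-equation expression $\int \varphi' v_t\,\dd\mu_{A_t}$ with the affine derivative, conclude $v_t=0$ a.e., and finish with the mean value inequality. The divergence is entirely in the step ``show $v_t=0$'', and there your proposal has both a factual error and a genuine gap.

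\textbf{The factual error.} You assert that $\mu_A$ is equivalent to Lebesgue measure because ``$\nu_A$ has a H\"older density''. This is false: for $x\mapsto dx\bmod 1$ the eigenmeasure $\nu_A$ (and hence $\mu_A$) is \emph{singular} with respect to Lebesgue whenever $A\notin\fspace{C}$; only the class of the zero potential gives Lebesgue. This is not a harmless slip, because singularity is exactly the mechanism the paper exploits. The paper's key lemma says: if $\mu$ is atomless and \emph{singular}, then for any $f\in L^2(\mu)$ one can find smooth $\varphi_n$ with $\varphi_n'\to f$ in $L^2(\mu)$ while $\varphi_n\to 0$ in $\Holder_\beta$. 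Taking $f=v$ and using that the affine derivative is continuous in $\varphi$ (in H\"older norm), one gets
\[
\int v^2\,\dd\mu_A=\lim_n\int \varphi_n' v\,\dd\mu_A=\lim_n D(G_{\varphi_n})_A(\dot A)=0,
\]
hence $v=0$. Your claimed absolute continuity would actually \emph{obstruct} this: for Lebesgue measure the closure of $\{\varphi'\}$ in $L^2$ is the mean-zero functions, not all of $L^2$, and no such approximation lemma holds.

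\textbf{The gap in your scaling argument.} Your idea of testing against $\varphi\circ T^n$ is different from the paper's and is interesting, but the step ``one shows the integral does not decay like $d^{-n}$ unless \dots'' is not carried out. If one tries to make it precise, the identity $\int(\varphi'\circ T^n)v\,\dd\mu_A=\int\varphi'\,\op{L}_{N(A)}^n(v)\,\dd\mu_A=d^{-n}\int\varphi' v\,\dd\mu_A$ says that $\op{L}_{N(A)}^n(v)-d^{-n}v$ is $L^2(\mu_A)$-orthogonal to all $\varphi'$. To conclude $\op{L}_{N(A)}v=d^{-1}v$ you would need to know that $\{\varphi'\}$ is dense in $L^2(\mu_A)$, which again is the singularity-based approximation lemma you have not proved (and which your equivalence claim contradicts). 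Even granting that, you are left with showing that $\op{L}_{N(A)}$ has no $L^2(\mu_A)$-eigenvector with eigenvalue $1/d$; but the $L^2$ spectrum of a transfer operator is typically the full unit disk, and such eigenvectors \emph{do} exist in simple cases (e.g.\ for Lebesgue, $\sum_{k\ge0}d^{-k}e^{2\pi i d^k x}$ works). So the route does not obviously close.

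In short: keep the global structure, drop the claim that $\mu_A$ is absolutely continuous, and replace the scaling heuristic by the approximation lemma for singular atomless measures --- that is where the actual work lies.
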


Recalling the interpretation of the Wasserstein metric $W_2$ above, we see
that changing smoothly the potential changes smoothly the levels of the
Gibbs measure (Theorem \ref{theo:affine-derivative}), but in a way that
corresponds to brutal reallocations of the mass distribution (Theorem
\ref{theo:roughness}). This result should be compared to
Corollary 1.3 in \cite{KLS}, where a Lipschitz-regularity result
is proved for the Gibbs map when $\spacem{P}(\Omega)$ is endowed
with the $1$-Wasserstein distance (which however does not yield
a differentiable structure).

The proof mostly relies on the following
point-wise non-differentiability result.
\begin{prop}\label{prop:roughness}
Under the same assumption as in Theorem \ref{theo:roughness},
consider the Gibbs map
$G:\Holder_\alpha(\mathbb{S}^1) \to\spacem{P}(\mathbb{S}^1)$
sending each $A$ to $\mu_A$.

If $G$ is Wasserstein-differentiable at any potential
$A$ in any direction $\zeta$, then either
$\mu_A$ is the Lebesgue measure (i.e. $A\in\fspace{C}$)
or the derivative vanish (i.e.
$W_2(\mu_{A+t\zeta},\mu_A) = o(t)$).
\end{prop}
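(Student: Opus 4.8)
The plan is to suppose, for contradiction, that $G$ is Wasserstein-differentiable at $A$ in the direction $\zeta$ with a nonzero tangent vector, and that $\mu_A$ is \emph{not} the Lebesgue measure. By definition of Wasserstein differentiability there is $v\in T_{\mu_A}\spacem{P}(\mathbb{S}^1)=L^2_\nabla(\mu_A)$ with $W_2(\mu_{A+t\zeta},\exp_{\mu_A}(tv))=o(t)$; since we are assuming the derivative does not vanish, $v\neq 0$ in $L^2(\mu_A)$. The idea is to confront this ``horizontal'' first-order description of the motion of $\mu_A$ with the ``vertical'' one coming from the affine structure. On the affine side, Theorem \ref{theo:affine-derivative} gives, for every smooth (indeed every $\alpha$-H\"older) test function $\varphi$,
\[\frac{\dd}{\dd t}\int\varphi\,\dd\mu_{A+t\zeta}\Big|_{t=0}
 = \langle\varphi,\zeta\rangle_A,\]
while the continuity equation (the Lemma just above) applied to the Wasserstein-differentiable curve $t\mapsto\mu_{A+t\zeta}$ gives
\[\frac{\dd}{\dd t}\int\varphi\,\dd\mu_{A+t\zeta}\Big|_{t=0}
 = \int\varphi'\,v\,\dd\mu_A.\]
Hence $\int\varphi' v\,\dd\mu_A=\langle\varphi,\zeta\rangle_A$ for all smooth $\varphi$; in particular the linear functional $\varphi\mapsto\langle\varphi,\zeta\rangle_A$ must factor through $\varphi\mapsto\varphi'$, i.e. it must vanish on all constants (which it does) \emph{and} be expressible as integration of $\varphi'$ against the fixed $L^2(\mu_A)$ vector field $v$.

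The heart of the argument is then to show that this factorization is impossible unless $\mu_A$ is Lebesgue. Here I would use the explicit spectral formula for $\langle\cdot,\cdot\rangle_A$: writing $B=N(A)$ and using $\mu_A$-invariance and that $\op{L}_B$ is a left inverse of composition,
\[\langle\varphi,\zeta\rangle_A=\sum_{i\ge 0}\int\varphi\cdot DN_A(\zeta)\circ T^i\,\dd\mu_A
 =\int\varphi\cdot W\,\dd\mu_A,\qquad W:=\sum_{i\ge 0}DN_A(\zeta)\circ T^i,\]
the series converging in $\fspace{X}(\mathbb{S}^1)$ by the spectral gap, so $W\in\Holder_\alpha(\mathbb{S}^1)$ is a fixed, genuinely H\"older (in particular continuous) function, and $W\not\equiv 0$ because $\zeta\notin\fspace{C}$ forces $\langle\zeta,\zeta\rangle_A>0$ (Proposition \ref{prop:isotropy}). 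So the requirement becomes: there exists $v\in L^2_\nabla(\mu_A)$ with
\[\int\varphi' v\,\dd\mu_A=\int\varphi\,W\,\dd\mu_A\qquad\text{for all smooth }\varphi.\]
This says precisely that the distribution $-(v\mu_A)'$ equals $W\mu_A$, i.e. $v\mu_A$ has an absolutely continuous primitive with continuous density $-W$; integrating, $v\mu_A$ is, up to an additive constant times $\mu_A$... no: more carefully, letting $F(x)=\int_0^x W\,\dd\mu_A$ (a continuous function of bounded variation), the relation forces $v\,\dd\mu_A = c\,\dd x - \dd F$ as measures on $\mathbb{S}^1$ for the unique constant $c$ making the right side have zero total mass. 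The right-hand side is a fixed finite signed measure independent of any further freedom; the left-hand side is $v\,\dd\mu_A$ with $v\in L^2(\mu_A)$, so in particular $v\,\dd\mu_A$ must be absolutely continuous with respect to $\mu_A$. Comparing, $c\,\dd x - \dd F \ll \mu_A$, and since $\dd F = W\,\dd\mu_A\ll\mu_A$ we get $c\,\dd x\ll\mu_A$. If $c\neq 0$ this means Lebesgue measure is absolutely continuous with respect to $\mu_A$; combined with the reverse direction one deduces $\mu_A$ is equivalent to Lebesgue, and then (since the normalized potential of a measure equivalent to Lebesgue for $x\mapsto dx$, being recoverable as a Jacobian as in Remark \ref{rema:gibbstandard}, must be $\log(1/d)$) $\mu_A$ \emph{is} Lebesgue, contrary to assumption. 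If instead $c=0$, then $v\,\dd\mu_A=-W\,\dd\mu_A$, i.e. $v=-W$ in $L^2(\mu_A)$; but then $v=-W$ must lie in $L^2_\nabla(\mu_A)=\overline{\{\varphi'\}}^{L^2(\mu_A)}$, whereas a nonzero \emph{continuous} function that equals an $L^2(\mu_A)$-limit of derivatives of smooth functions would have to have vanishing ``$\mu_A$-average along its primitive'' in a way that is again impossible unless $\mu_A$ is Lebesgue — this is the delicate point.

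The main obstacle, and the step to which I would devote the most care, is exactly this last dichotomy: ruling out the possibility that the required vector field $v$ both represents $W$ against $\mu_A$ \emph{and} lies in the closure of gradients $L^2_\nabla(\mu_A)$ when $\mu_A\neq\mathrm{Leb}$. The clean way is to characterize $L^2_\nabla(\mu)$ for atomless $\mu$ on $\mathbb{S}^1$: $w\in L^2_\nabla(\mu)$ iff there is $u$ with $u'=w$ in the sense that $u$ can be written $u(x)=\int_0^x w\,\dd\mu$ up to... — precisely, $w\in L^2_\nabla(\mu)$ iff the signed measure $w\,\dd\mu$ is the derivative of a (necessarily continuous, single-valued on $\mathbb{S}^1$) function, i.e. iff $\int w\,\dd\mu=0$ \emph{and} $w\,\dd\mu$ has the special structure of a ``$\mu$-gradient''; in the circle case this essentially pins $w$ down once $w\,\dd\mu$ is prescribed. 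I expect the correct statement to be that the continuity equation forces $v$ to be determined by the affine derivative, namely $v\,\dd\mu_A$ must equal the fixed measure above, and then membership in $L^2_\nabla(\mu_A)$ fails because that measure has a nonzero Lebesgue component whenever $\mu_A\neq\mathrm{Leb}$ — precisely because $\mu_A$, being the Gibbs measure of a non-cohomologically-trivial potential for an expanding map, is mutually singular with or at least not equivalent to Lebesgue unless it is Lebesgue. I would isolate this measure-theoretic comparison as a lemma and expect the rest (deriving the continuity-equation identity, invoking Theorem \ref{theo:affine-derivative}, packaging $\langle\varphi,\zeta\rangle_A$ as $\int\varphi W\,\dd\mu_A$) to be routine. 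Finally, Theorem \ref{theo:roughness} will follow by a standard argument: if $(\mu_{A_t})$ were absolutely continuous in $(\spacem{P}(\mathbb{S}^1),W_2)$ it would be Wasserstein-differentiable at a.e.\ $t$ with nonzero speed wherever $A_t\notin A_0+\fspace{C}$, contradicting Proposition \ref{prop:roughness} (applied with $\zeta=\dot A_t$), so the curve is constant.
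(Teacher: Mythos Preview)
Your proposal contains a genuine error at a critical step. You claim that
\[\langle\varphi,\zeta\rangle_A=\int\varphi\cdot W\,\dd\mu_A,\qquad W:=\sum_{i\ge 0}DN_A(\zeta)\circ T^i,\]
with the series converging in $\Holder_\alpha(\mathbb{S}^1)$ ``by the spectral gap''. This is false: the spectral gap controls $\op{L}_{N(A)}^i$ acting on mean-zero functions, not composition by $T^i$. For $T:x\mapsto dx$ the H\"older seminorm of $\zeta\circ T^i$ grows like $d^{i\alpha}$, so the series diverges. What actually converges is $\sum_i\int\op{L}_{N(A)}^i(\varphi_A)\cdot DN_A(\zeta)\,\dd\mu_A$, where the decay sits on the $\varphi$ side; there is no fixed $W\in L^1(\mu_A)$ representing $\varphi\mapsto\langle\varphi,\zeta\rangle_A$. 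Indeed, such a $W$ would make this functional continuous for the sup norm, whereas $(I-\op{L}_{N(A)})^{-1}$ is bounded only on H\"older functions. Everything downstream of this representation (the signed-measure identity $v\,\dd\mu_A=c\,\dd x-\dd F$, the dichotomy on $c$) therefore collapses.

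Even setting this aside, your $c=0$ branch does not close. You would need $-W\notin L^2_\nabla(\mu_A)$, but for an atomless measure on $\mathbb{S}^1$ that is \emph{singular} with respect to Lebesgue one has $L^2_\nabla(\mu)=L^2(\mu)$: any $L^2(\mu)$ function can be approximated by derivatives of smooth functions. (This is exactly the content of the approximation lemma the paper proves.) So the membership constraint you hope to violate is vacuous precisely in the case at hand.

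The paper's route avoids both problems by exploiting this approximation directly. Since $\mu_A\neq\mathrm{Leb}$ is singular with respect to Lebesgue (two distinct ergodic invariant measures are mutually singular), one can find smooth $\varphi_n$ with $\varphi_n'\to v$ in $L^2(\mu_A)$ \emph{and simultaneously} $\varphi_n\to 0$ in $\Holder_\beta$ for some $\beta\le\alpha$. Plugging these $\varphi_n$ into the two expressions for the derivative, the continuity-equation side tends to $\int v^2\,\dd\mu_A$ while the affine side tends to $0$ (because it is H\"older-continuous in $\varphi$). Hence $v=0$. The key idea you are missing is this \emph{simultaneous} approximation lemma: smallness of $\varphi_n$ in H\"older norm together with convergence of $\varphi_n'$ in $L^2(\mu_A)$, which is possible precisely because $\mu_A$ is concentrated on a Lebesgue-null set.
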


\begin{proof}
Assume that $G$ is Wasserstein-differentiable at $A$ in the direction
$\zeta$.

If $\varphi$ is any smooth function, on the one hand the continuity
equation gives
\[\frac{\dd}{\dd t} \int \varphi \,\dd\mu_{A+t\zeta} \Big|_{t=0}
  = \int \varphi' v \,\dd\mu_A\]
where $v\in L^2(\mu_A)$ is some vector field (which can be approximated
by gradients in $L^2(\mu_A)$); on the other hand
section
\ref{sec:integral} gives
\[\frac{\dd}{\dd t} \int \varphi \,\dd\mu_{A+t\zeta} \Big|_{t=0}=
\int (I-\op{L}_A)^{-1}(\varphi_A) \cdot DN_A(\zeta) \,\dd\mu_A.\]

We get two very different-looking linear forms in $\varphi$ which both
describe the variations of its integral. The
proof will thus be complete as soon as we prove that unless
$\mu_A$ is the Lebesgue measure, these two forms can agree only by
vanishing.

For this, we use the following approximation lemma.
\begin{lemm}
Let $\mu$ be a measure on $\mathbb{S}^1$ which is singular with respect
to the Lebesgue measure and without atoms;
then for all $f\in L^2(\mu)$, and all $\beta<1$ there
is a sequence
of smooth functions $\varphi_n:\mathbb{S}^1\to \mathbb{R}$ such
that $\varphi_n' \to f$ in $L^2(\mu)$ and $\varphi_n \to 0$ in
$\Holder_\beta(\mathbb{S}^1)$.
\end{lemm}

\begin{proof}
We first claim that when $I\subset [0,1]$ is an interval of
length $\ell$,
$w:I\to \mathbb{R}$ is measurable and $\mu$-essentially bounded by some
number $M$, and $\varepsilon>0$, there is a smooth function
$\varphi:I\to\mathbb{R}$ such that $\varphi$ and all its derivatives
vanish at the endpoints of $I$,
$\lVert \varphi\rVert_\infty\le \varepsilon$,
$\lVert \varphi' \rVert_\infty \le M$, and
$\int_I (\varphi'-w)^2 \,\dd\mu \le \varepsilon^2\ell^2$.

Let $\eta>0$ be arbitrary, to be chosen later on.
Since $\mu$ is concentrated on a $\lambda$-negligible set,
there is a finite set of intervals $I_1,\dots,I_k\subset I$
with disjoint interiors whose total length is less than $\eta$ and whose
complement
$J = I\setminus (I_1\cup\dots\cup I_k)$
is given by $\mu$ a mass less than $\eta$. Let $w_1$ be
the function which:
\begin{itemize}
\item is constant on each $I_i$, with value the $\mu$-average
  of $w$ on $I_i$,
\item is constant on $J$, with value such that
  $\int_I w_1 \,\dd\lambda =0$.
\end{itemize}
By taking $\eta$ small enough and by dividing the intervals $I_i$
into smaller intervals, we can ensure that $\int (w- w_1)^2 \,\dd\mu$
is arbitrarily small.

Let $w_2$ be a smooth approximation of $w_1$ such that
$\int (w- w_2)^2 \,\dd\mu$ stays small, $w_2$ is bounded
by $M$, $\int_I w_2=0$, and $w_2$ is zero on some neighborhoods of the
endpoint of $I$ (this last condition is easy to fulfill since
$\mu$ has no atom).

Define a smooth, $M$-Lipschitz function $\varphi$ by
\[\varphi(x) := \int_a^x w_2(t) \,\dd t\]
where $a=\min I$ is the starting point of $I$.
Then $\varphi'=w_2$ is close to $w$ in $L^2(\mu,I)$ norm and
bounded above by $M$ (though $\varphi''$ is extremely large), and
$\varphi$ and its derivatives vanish at both endpoints of
$I$.
The uniform norm of $\varphi$ is then bounded by $M\eta$,
and can thus be made arbitrarily small, proving
the claim.

Now, given $v$ and an integer $n$, choose a
$\mu$-essentially bounded function $\bar v$ which is $1/n$-close
to $v$ in $L^2(\mu)$, call $M$ its essential bound,
then choose $\ell$ small enough to
ensure that $\ell^{1-\beta} M<1/n$. Divide $\mathbb{S}^1$ into
intervals of length $\ell$ and apply the claim to each of them.
The boundary conditions enable us to glue the smooth functions
defined on each interval into a smooth function $\varphi_n$
defined on $\mathbb{S}^1$, such that $\varphi_n'$
is $M$-bounded and $1/n$-close to $\bar v$ in $L^2(\mu)$
and $\lVert \varphi_n\rVert_\infty <1/n$.
For any $x,y\in\mathbb{S}^1$, when $|x-y|\le \ell$ we get
\[\frac{|\varphi_n(x)-\varphi_n(y)|}{|x-y|^\beta}
  \le \lVert\varphi_n'\rVert |x-y|^{1-\beta}
  \le M \ell^{1-\beta} \le \frac{1}{n}\]
and when $|x-y|\ge \ell$ we get
\[\frac{|\varphi_n(x)-\varphi_n(y)|}{|x-y|^\beta}
  \le  \frac{|\varphi_n(x)-0|+|0-\varphi_n(y)|}{|x-y|^\beta}
  \le \frac{2M\ell}{|x-y|^\beta}
  \le 2M \ell^{1-\beta} \le 2/n.\]

This proves the Lemma.
\end{proof}

Now we simply apply the Lemma to $f=v$, and $\beta=\alpha$
if $\alpha<1$, or any lower $\beta$ otherwise
(using that the thermodynamical formalism holds for the current
$T$ with any $\beta$). This gives
us smooth functions
$\varphi_n$ such that
\[\int v^2 \,\dd\mu_A =
\lim_n \frac{\dd}{\dd t} \int \varphi_n \,\dd\mu_{A+t\zeta} \Big|_{t=0}
  =  \int (I-\op{L}_A)^{-1}(0)
  \cdot DN_A(\zeta) \,\dd\mu_A=0\]
(every operator being interpreted in the $\beta$-H\"older space if necessary)
so that $v$ vanishes $\mu_A$-almost everywhere, and the Wasserstein
derivative of $\mu_{A+t\zeta}$ vanishes.
\end{proof}

\begin{proof}[Proof of Theorem \ref{theo:roughness}]
If $(\mu_{A_t})_t$ is absolutely continuous, it is
differentiable almost-every\-where and from Proposition
\ref{prop:roughness}
we deduce that at each $t$ such that $\mu_{A_t}$ is differentiable
and not Lebesgue, its derivative vanishes. The mean value inequality
then ensures that $(\mu_{A_t})$ must then be constant.
\end{proof}

We end this section with some open questions. First, Proposition
\ref{prop:roughness} leaves open the following.
\begin{ques}
In the case of $T:x\mapsto dx\mod 1$, is the Gibbs map
differentiable at $A$ when $A\in\fspace{C}$
(i.e. when $\mu_A$ is the Lebesgue measure)?
\end{ques}

Second, note that the analogue
of Theorem \ref{theo:roughness} for the shift is true
independently of the map $G$,
since the $2$-Wasserstein space of an ultrametric space
such as $\mathcal{A}^{\mathbb{N}}$ contains no absolutely
continuous curve at all (see \cite{Kl3}). But the $2$-Wasserstein space
of a manifold contains plenty of absolutely continuous curves
(it is even a geodesic space), so when $\Omega$ has a smooth
structure, the irregularity of $G$ with respect to the Wasserstein
metric can be
somewhat surprising. One then wonders how much it has to do with
$G$, and how much it has to do with its image:
\begin{ques}
Assume $\Omega$ is a manifold and $T$ is smooth. Are there
any non-constant, absolutely continuous curves
$(\mu_t)_t$ in $(\spacem{P}(\Omega),W_2)$ such that
$\mu_t$ is $T$-invariant for all $t$? What about
the subset of Gibbs measures with $\alpha$-H\"older potential?
\end{ques}
In other words, we ask whether the set of $T$-invariant measures
is a nice, somewhat smooth subset of the set of all probability
measures, or if from the Wasserstein point of view
it is a very irregular subset of $\spacem{P}(\mathbb{S}^1)$
(one can think of the Von Koch curve in $\mathbb{R}^2$ as an example
of a connected, very irregular subset of a smooth space).

\section{Application to equilibrium states}
\label{sec:applications}

In this section we use the differential calculus developed
above to study several classical optimization problems.

\subsection{Entropy and pressure}

Given our broad framework, we shall use the following
Legendre transform definition for entropy: for any
$T$-invariant measure $\nu$, we set
\[\entropy(\nu) := \inf_{A\in\fspace{X}} \big(\log \lambda_A
  -\int A \,\dd\nu \big).\]
Note that this quantity a priori depend on the chosen class
of function $\fspace{X}(\Omega)$; but in many cases it
is in fact equal to the metric entropy of $\mu$, see remarks \ref{rema:definition2} and \ref{rema:definition3}.
The assumption \ref{hypo:large} ensures that
$\fspace{X}(\Omega)$ is quite large, preventing
$\entropy{}$ to be too degenerate.
\begin{rema}
The number $\log\lambda_A-\int A\,\dd\nu$ only depend on the
class $[A]$ of $A$ modulo $\fspace{C}$ (adding a constant
to $A$ changes $\log\lambda_A$ and $\int A\,\dd\nu$ by
the same additive constant, and adding a coboundary leaves
both terms unchanged). In particular, one can rewrite
\[\entropy(\nu) = \inf_{A\in\fspace{N}} \int(-A) \,\dd\nu\]
and observe that $A(y)=\log e^{A(y)}$ where
\[\mathbb{P}(x\to y) := \begin{cases}
 e^{A(y)} &\mbox{when }T(y)=x \\
 0 &\mbox{otherwise} \end{cases} \]
defines transition probabilities for a
Markov chain on $\Omega$ supported on backward orbits of
$T$. In other words, $H_\nu$ is the infimum of
$\int \big(-\log \mathbb{P}(T(y)\to y)\big)\,\dd\nu(y)$
over Markov chains supported on backward orbits of $T$,
such that transition probabilities depends on the endpoint,
with a regularity specified by $\fspace{X}(\Omega)$.
%
\end{rema}

Together with such a definition of entropy naturally comes
a dual quantity, the \emph{pressure}: for any potential
$B\in\fspace{X}(\Omega)$ we set
\[\pressure(B) := \sup_{\mu\in\spacem{P}_T(\Omega)}
  \big( \entropy(\mu) + \int B\,\dd\mu \big).\]
In many cases (e.g. shift in the Bernoulli space),
this turns out to coincide with the classical topological pressure
(see again remarks \ref{rema:definition2} and \ref{rema:definition3}).
Here we will concentrate on the study of the above Legendrian
formulations for these quantities, as they fit our framework most
naturally.

One of our main concern is to understand when and where
the above infimum and supremum are attained; we thus consider
the families of functionals defined for
$\mu,\nu\in\spacem{P}_T(\Omega)$ and $A,B\in\fspace{X}(\Omega)$ by
\begin{align*}
H_\nu(A) &= \log \lambda_A  -\int A\,\dd\nu \\
P_B(\mu) &= \entropy(\mu)+\int B\,\dd\mu
\end{align*}
The functional $P_B$ is defined for all $T$-invariant measures
but we shall also study its restriction to Gibbs measures,
considered as acting on potentials:
\[P_B(A)= \entropy(\mu_A)+\int B\,\dd\mu_A.\]
We will abusively use the same name $P_B$
for the map defined on invariant measure, the map
defined on potentials, its restriction to normalized
potential and the map it induces on the quotient
$\fspace{Q} = \fspace{X}(\Omega)/\fspace{C}$. The way we write the
argument ($P_B(\mu_A)$, $P_B(A)$ or $P_B([A])$)
will usually make the difference clear.

Since $H_\nu$ is $\fspace{C}$-invariant,it induces a functional on the
quotient $\fspace{Q}$, which we still denote by $H_\nu$.

\subsection{Classical equilibrium states and Legendre duality}
\label{sec:variational}

\subsubsection{The entropy functionals}

We start with the study of the functionals $H_\nu$.

\begin{prop}\label{prop:diff-entropy}
For all $\nu\in\spacem{P}_T(\Omega)$, the functional
$H_\nu$ on $\fspace{X}(\Omega)$ is analytic with
\[D(H_\nu)_A(\zeta) = \int \zeta \,\dd\mu_A - \int \zeta \,\dd\nu.\]
Moreover the map $[A]\mapsto H_\nu([A])$ induced
on $\fspace{Q}$ is strictly convex.
\end{prop}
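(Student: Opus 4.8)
The plan is to reduce everything to facts already proved for the leading eigenvalue map and for the variance form. Write $H_\nu(A)=\log\Lambda(A)-\ell_\nu(A)$ with $\ell_\nu(A):=\int A\,\dd\nu$. By \ref{hypo:Banach} the measure $\nu$ is a continuous linear form on $\fspace{X}(\Omega)$, so $\ell_\nu$ is (trivially) analytic with $D(\ell_\nu)_A=\ell_\nu$ and vanishing second derivative; and $\log\Lambda$ is analytic by Corollary \ref{coro:eigendata}. Hence $H_\nu$ is analytic as a difference of analytic maps, and
\[D(H_\nu)_A(\zeta)=D(\log\Lambda)_A(\zeta)-\ell_\nu(\zeta)=\int\zeta\,\dd\mu_A-\int\zeta\,\dd\nu,\]
using Corollary \ref{coro:eigendata} for the first term. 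Since (as recorded in the preceding remark) the quantity $\log\lambda_A-\int A\,\dd\nu$ depends only on $[A]\in\fspace{Q}$, and $\fspace{Q}=\fspace{X}(\Omega)/\fspace{C}$ is a Banach space with $\fspace{C}$ complemented (Proposition \ref{prop:LC}), $H_\nu$ descends to an analytic functional on $\fspace{Q}$; equivalently one may identify it with its restriction to $\fspace{N}$ composed with the analytic diffeomorphism $\fspace{N}\to\fspace{Q}$ of Theorem \ref{theo:Nmap}.

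For strict convexity I would compute the second derivative: the linear part $\ell_\nu$ contributes nothing, so $D^2(H_\nu)_A=D^2(\log\Lambda)_A=\langle\cdot,\cdot\rangle_A$ by the identity established at the end of Section \ref{sec:metric}. Now fix distinct classes $[A_0]\neq[A_1]$ in $\fspace{Q}$ and a representative $\zeta$ of $[A_1]-[A_0]$, so $\zeta\notin\fspace{C}$, and set $\phi(t):=H_\nu(A_0+t\zeta)$. This is an analytic real function (restriction of an analytic map to an affine line), with
\[\phi''(t)=D^2(H_\nu)_{A_0+t\zeta}(\zeta,\zeta)=\langle\zeta,\zeta\rangle_{A_0+t\zeta}.\]
By Proposition \ref{prop:isotropy}, since $\zeta\notin\fspace{C}$ we have $\langle\zeta,\zeta\rangle_{A_0+t\zeta}>0$ for every $t$; a twice-differentiable real function on an interval with everywhere positive second derivative is strictly convex, and applying this on $[0,1]$ while letting $[A_0],[A_1]$ vary yields strict convexity of $H_\nu$ on $\fspace{Q}$.

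The argument has no real obstacle — it is an assembly of Corollary \ref{coro:eigendata}, the identification $D^2(\log\Lambda)_A=\langle\cdot,\cdot\rangle_A$, and Proposition \ref{prop:isotropy}. The one point needing care is that strict convexity on the infinite-dimensional space $\fspace{Q}$ cannot be obtained from a uniform lower bound on the Hessian (the variance form is only \emph{weakly} positive-definite), so one must argue segment by segment via the elementary one-variable fact above; and one should not forget that the non-degeneracy of $\langle\cdot,\cdot\rangle_A$ modulo $\fspace{C}$ used here rests on hypothesis \ref{hypo:support}, which is in force from Section \ref{sec:metric} onwards.
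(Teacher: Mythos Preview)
Your proof is correct and follows essentially the same route as the paper: decompose $H_\nu=\log\Lambda-\ell_\nu$, invoke Corollary \ref{coro:eigendata} for the derivative, identify the Hessian with $\langle\cdot,\cdot\rangle_A$, and conclude strict convexity from weak positive-definiteness modulo $\fspace{C}$. Your segment-by-segment one-variable argument for strict convexity is a welcome explicitation of what the paper leaves implicit when it passes directly from ``positive-definite Hessian'' to ``strictly convex''.
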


\begin{proof}
Let us recall that $\Lambda:\fspace{X}(\Omega) \to (0,+\infty)$
is the analytic functional defined by $\Lambda(A)=\lambda_A$, and that
for all $A,\zeta\in\fspace{X}(\Omega)$ we have
$D(\log\Lambda)_A(\zeta) = \int \zeta \,\dd\mu_A$
(Corollary \ref{coro:eigendata}).
Since the second term in $H_\nu(A)=\log\lambda_A - \int A \,\dd\nu$
is linear and thus analytic
and equal to its derivative at any point, $H_\nu$ is analytic
with
$D(H_\nu)_A(\zeta) = \int \zeta \,\dd\mu_A - \int \zeta \,\dd\nu$.
The second term is constant in $A$, and by the work of Sections
\ref{sec:integral} and \ref{sec:metric} the second derivative is given by
\[D^2(H_\nu)(\zeta,\eta)= DG_A(\eta)(\zeta) = \langle\eta,\zeta\rangle_A.\]
In other words, considering the  functional $H_\nu$ induced on $\fspace{Q}$
we have $D^2(H_\nu)=\langle\cdot,\cdot\rangle_{[A]}$
which is positive-definite, proving the strict convexity on $\fspace{Q}$.
\end{proof}
Note that we do not have \emph{uniform} convexity (even locally)
since the inner product is only weakly positive-definite (there
are directions $[\zeta]$ with fixed size $\lVert[\zeta]\rVert$
such that the ``convexity'' $\lVert[\zeta]\rVert_{[A]}$ is arbitrarily
small).
Of course, $H_\nu$ is only weakly convex on
$\fspace{X}(\Omega)$ since it is constant along each fiber
$A+\fspace{C}$.

Proposition \ref{prop:diff-entropy} now implies the following result.
\begin{coro}\label{coro:vp-entropy}
When $\nu=\mu_B$ for some $B\in\fspace{X}(\Omega)$, then
 $H_{\mu_B}([A])$ is uniquely minimized at $[A]=[B]$ and thus
\[\entropy(\mu_B)=\log\lambda_B-\int B \,\dd\mu_B=-\int N(B) \,\dd\mu_B.\]
When $\nu$ is not in the image of the Gibbs map,
$H_\nu$ does not reach its infimum.
\end{coro}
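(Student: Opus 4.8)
The plan is to derive both halves of the statement directly from Proposition~\ref{prop:diff-entropy}, which tells us that the induced functional $[A]\mapsto H_\nu([A])$ on $\fspace{Q}=\fspace{X}(\Omega)/\fspace{C}$ is analytic, strictly convex, and has differential $D(H_\nu)_A(\zeta)=\int\zeta\,\dd\mu_A-\int\zeta\,\dd\nu$ for all $\zeta\in\fspace{X}(\Omega)$.

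For the first statement I would specialize to $\nu=\mu_B$. Then $D(H_{\mu_B})_B(\zeta)=\int\zeta\,\dd\mu_B-\int\zeta\,\dd\mu_B=0$ for every $\zeta$, so $[B]$ is a critical point of $H_{\mu_B}$ on $\fspace{Q}$. A critical point of a differentiable strictly convex function on a Banach space is its unique global minimum: restricting to the line through $[B]$ in an arbitrary direction $[\zeta]\neq 0$ (i.e. $\zeta\notin\fspace{C}$), the map $t\mapsto H_{\mu_B}([B]+t[\zeta])$ has vanishing derivative at $t=0$ and second derivative $\langle\zeta,\zeta\rangle_{B+t\zeta}$, which is strictly positive by Proposition~\ref{prop:isotropy}; hence this map is strictly increasing on each side of $0$ and $[B]$ is strictly below every other point. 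Therefore $\entropy(\mu_B)=\inf_{[A]}H_{\mu_B}([A])=H_{\mu_B}([B])=\log\lambda_B-\int B\,\dd\mu_B$, and evaluating $H_{\mu_B}$ on the normalized representative $N(B)$, for which $\lambda_{N(B)}=1$ and $\mu_{N(B)}=\mu_B$, yields the last equality $\entropy(\mu_B)=-\int N(B)\,\dd\mu_B$.

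For the second statement I would argue by contraposition. Suppose $H_\nu$ attains its infimum at some $[A]\in\fspace{Q}$. Since $\fspace{Q}$ is a Banach space and $H_\nu$ is everywhere differentiable, this global minimum is a critical point, so $D(H_\nu)_A\equiv 0$, that is $\int\zeta\,\dd\mu_A=\int\zeta\,\dd\nu$ for all $\zeta\in\fspace{X}(\Omega)$. By hypothesis~\ref{hypo:large} every continuous function on $\Omega$ is a uniform limit of elements of $\fspace{X}(\Omega)$, and since $\mu_A,\nu$ are probability measures this equality of integrals extends to all of $C(\Omega)$; as two Borel probability measures on a compact metric space that agree on $C(\Omega)$ coincide, we get $\mu_A=\nu$, so $\nu$ lies in the image of the Gibbs map.

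Both halves are short once Proposition~\ref{prop:diff-entropy} is available, and I do not expect a genuine obstacle; the only new input is hypothesis~\ref{hypo:large}, used in the converse to upgrade agreement of the $\fspace{X}(\Omega)$-integrals of $\mu_A$ and $\nu$ to $\mu_A=\nu$. Note in particular that nothing need be said about whether the infimum defining $\entropy(\nu)$ is finite: if it is $-\infty$ it is trivially not attained, and if it is finite and attained the argument above forces $\nu$ into the image of $G$.
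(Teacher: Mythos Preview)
Your proof is correct and follows essentially the same route as the paper's: both identify the critical points of $H_\nu$ via the formula $D(H_\nu)_A(\zeta)=\int\zeta\,\dd(\mu_A-\nu)$, invoke hypothesis~\ref{hypo:large} to separate measures, and use the strict convexity from Proposition~\ref{prop:diff-entropy} to conclude uniqueness of the minimizer. The only differences are presentational---you spell out the one-dimensional restriction argument for why a critical point of a strictly convex function is the unique minimum, and you phrase the second half as a contraposition---but the mathematical content is the same.
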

Note that a normalized $B$ is non-positive and non-zero
and has $\lambda_B=1$, so that $\entropy(\mu_B)> 0$
(use \ref{hypo:support} to get the strict inequality).

\begin{proof}
Hypothesis \ref{hypo:large} implies
that $\fspace{X}(\Omega)$ ``separates measures'' i.e.
\[\Big( \forall \zeta\in\fspace{X}(\Omega):
  \int \zeta \,\dd\mu=\int \zeta \,\dd\nu\Big) \implies
  \mu=\nu.\]
Using $D(H_\nu)_A(\zeta) = \int \zeta \,\dd(\mu_A - \nu)$
we see that when $\nu$ is not in the image of the Gibbs map $H_\nu$
has no critical point, hence no minimum;
and when $\nu=\mu_B$ the critical points of $H_{\mu_B}$
are exactly the potentials $A$ such that $\mu_A=\mu_B$.
Going down
to the quotient we get only one critical point $[B]$ and
the strict convexity implies that this critical point is the unique
minimizer.
\end{proof}

\begin{rema}\label{rema:gibbsuniqueness}
At first glance, it looks like we used that the Gibbs map
$G:A\mapsto \mu_A$ is one-to-one in this proof, while we were only
able to prove it in some cases in Remark \ref{rema:gibbstandard}.
But in fact, the above proof rather \emph{implies}
the injectivity of $G$, as  by strict convexity
for all $\nu$ it can exist at most one
critical point of $H_\nu$ on $\fspace{Q}$.
\end{rema}

\begin{rema}\label{rema:definition2}
When $\nu=\mu_B$ is a Gibbs measure we thus obtain
\[\entropy(\mu_B)=\int \big(-\log e^{N(B)(y)}\big) \,\dd\mu_B(y)\]
where $e^{N(B)}$ can be interpreted as a transition probability,
or as the Jacobian of ``$\dd\mu/\dd\mu\circ T$'' (Remark
 \ref{rema:gibbstandard}). This can be used for some  $(\Omega,T,\fspace{X}(\Omega))$ to show that $\entropy(\mu_B)$ is equal to the metric entropy $h(\mu_B$); in particular this is the case for the shift $\sigma$ acting on  the Bernoulli space $\mathcal{A}^{\mathbb{N}}$ with H\"older potentials (the Classical Thermodynamical Formalism in the sense of \cite{PP}). 

In this case $(\sigma,\mathcal{A}^{\mathbb{N}},\Holder_\alpha)$ the equality $\entropy(\nu)=h(\nu)$ extends to any invariant probability $\nu$. Indeed by Theorem 9.12 in \cite{Wal} for any $\sigma$-invariant probability $\nu$ on the Bernoulli space, the metric entropy $h(\nu)$ satisfies
 $$ h(\nu) = \inf_{A\in C^0  (\mathcal{A}^{\mathbb{N}})} \Big\{ P(A) -\int A \,\dd \nu\Big\}$$
where $P$ is the topological pressure. As topological pressure is a continuous function on the continuous  potential $A$ (see Theorem 9.7 in \cite{Wal}) and the set of H\"older functions is dense in $C^0(\mathcal{A}^{\mathbb{N}})$, the infimum above can be restricted to the H\"older potentials $A$. For H\"older potentials the pressure satisfies $P(A)=\log \lambda_A$ and this shows that  $\entropy(\nu)=h(\nu)$. Of course this reasoning applies to all cases
when the topological pressure coincides with $\log\Lambda$ and
the metric entropy is the Legendre dual of pressure.

 The analogous results is proved for Gibbs plans in Lemma 6 in \cite{LMMS2}. An invariant probability is  particular case of a Gibbs plan (see equation (1) in \cite{LMMS2}) and this provides another proof for the equality of entropies. 
\end{rema}

\subsubsection{The pressure functionals}

We are now in a position to extend the following classical
result to our general framework.
\begin{theo}[Gibbs measures are equilibirum states]\label{theo:variational}
For all $B\in\fspace{X}(\Omega)$, we have
$\pressure(\mu_B)=\log\lambda_B$ and
$\mu_B$ is the unique maximizer of
$P_B(\mu)=\entropy(\mu)+\int B\,\dd\mu$
among all $T$-invariant probability measures.
\end{theo}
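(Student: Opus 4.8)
The plan is to derive Theorem~\ref{theo:variational} as a quick consequence of Corollary~\ref{coro:vp-entropy}, which already does all the hard work. First I would fix $B\in\fspace{X}(\Omega)$ and recall the definition $P_B(\mu)=\entropy(\mu)+\int B\,\dd\mu$ over $\mu\in\spacem{P}_T(\Omega)$, and the definition $\entropy(\nu)=\inf_{A\in\fspace{X}(\Omega)}\big(\log\lambda_A-\int A\,\dd\nu\big)$. The crucial point to extract from the infimum definition is the \emph{duality inequality}: for every $T$-invariant $\nu$ and every $A\in\fspace{X}(\Omega)$ we have $\entropy(\nu)\le\log\lambda_A-\int A\,\dd\nu$, hence $\entropy(\nu)+\int A\,\dd\nu\le\log\lambda_A$, i.e. $P_A(\nu)\le\log\lambda_A$ for all $\nu$. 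Taking $A=B$ this immediately gives $\pressure(B)=\sup_\nu P_B(\nu)\le\log\lambda_B$.

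For the reverse inequality and the identification of the maximizer, I would specialize to $\nu=\mu_B$. Corollary~\ref{coro:vp-entropy} tells us exactly that $H_{\mu_B}([A])$ is uniquely minimized at $[A]=[B]$, and that $\entropy(\mu_B)=\log\lambda_B-\int B\,\dd\mu_B$. Plugging this into $P_B(\mu_B)=\entropy(\mu_B)+\int B\,\dd\mu_B$ gives $P_B(\mu_B)=\log\lambda_B$. Combined with the upper bound $P_B(\nu)\le\log\lambda_B$ for all $\nu$, this shows $\pressure(B)=\log\lambda_B$ and that the supremum is attained at $\mu_B$.

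It remains to establish \emph{uniqueness} of the maximizer. Suppose some $T$-invariant $\nu$ satisfies $P_B(\nu)=\log\lambda_B$, i.e. $\entropy(\nu)+\int B\,\dd\nu=\log\lambda_B$. By definition of $\entropy(\nu)$ as an infimum, for every $A\in\fspace{X}(\Omega)$ we have $\entropy(\nu)\le\log\lambda_A-\int A\,\dd\nu$; the equality case says that this infimum is actually attained at $A=B$, since $\log\lambda_B-\int B\,\dd\nu=\entropy(\nu)$. So $H_\nu$ attains its infimum, and by Corollary~\ref{coro:vp-entropy} this forces $\nu$ to be in the image of the Gibbs map, say $\nu=\mu_C$ for some $C$, and moreover $[B]$ must be the (unique) minimizer of $H_{\mu_C}$, which means $[C]=[B]$ and hence $\nu=\mu_C=\mu_B$ (using the injectivity of $G$ on classes from Remark~\ref{rema:gibbsuniqueness}, which itself follows from strict convexity). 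This completes the proof.

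The main obstacle, such as it is, is purely bookkeeping: making sure the infimum-attainment argument is correctly reversed — i.e. that $P_B(\nu)=\log\lambda_B$ really does say ``$H_\nu$ reaches its infimum at $[B]$'' and not merely ``$\entropy(\nu)$ is large'' — and then invoking the correct clause of Corollary~\ref{coro:vp-entropy} (the dichotomy: either $\nu$ is a Gibbs measure and the minimizer is unique, or $H_\nu$ has no minimizer at all). No genuinely new estimate is needed; everything rests on the strict convexity of $H_\nu$ on $\fspace{Q}$ already proved in Proposition~\ref{prop:diff-entropy}.
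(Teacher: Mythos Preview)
Your proof is correct and follows essentially the same route as the paper's: both reduce everything to Corollary~\ref{coro:vp-entropy}. The paper compresses your three steps into the single identity $P_B(\mu)-\log\lambda_B = \inf_A H_\mu(A) - H_\mu(B)$ and then reads off directly from Corollary~\ref{coro:vp-entropy} that this quantity is $\le 0$ with equality iff $\mu=\mu_B$; your version unpacks the same logic into the duality inequality, the computation $P_B(\mu_B)=\log\lambda_B$, and a separate uniqueness argument, but the substance is identical.
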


\begin{proof}
Simply observe that
\[P_B(\mu)-\log\lambda_B = \inf_{A\in\fspace{X}(\Omega)} H_\mu(A)
  -H_\mu(B).\]
Consider the functional $A\mapsto H_\mu(A)-H_\mu(B)$: it
takes the value $0$ at $A=B$ and by
Corollary \ref{coro:vp-entropy} this is its infimum precisely
when $\mu=\mu_B$. We deduce that $P_B(\mu_B)=\log\lambda_B$
and that for any other measure $\mu\in\spacem{P}_T(\Omega)$,
$P_B(\mu)<\log\lambda_B$.
\end{proof}

\begin{rema}
The expression $\pressure(\mu_B)=\log\lambda_B$
shows that $\pressure{}$ and $\entropy{}$ are
really Legendre duals one to the other, since we can now write
the later
$\entropy(\mu)=\inf_A \pressure(A) -\int A\,\dd\mu$.
\end{rema}

\begin{rema}\label{rema:definition3}
We can deduce from that result that $\entropy{}$ is the metric entropy
and $\pressure{}$ the topological pressure whenever
we know the later to be equal to $\log\Lambda$ and the former to
be its Legendre dual. In particular, this holds when $T$ is the shift
over a finite alphabet and $\fspace{X}=\Holder_\alpha$, but of course
in this case it is
possible and more satisfactory to prove that $\entropy{}$ and $\pressure{}$
are the classical quantities\footnote{For example one can
proceed as in \cite{Lopes-etal}, noting that we use
here the classical normalization.} and recover their interpretation in terms
of eigenvalue and Legendre dual by the above.
\end{rema}

\begin{rema}
As a particular case, the measure of maximal entropy is
unique and equal to $\mu_0$ where $0$ is the zero of
$\fspace{X}(\Omega)$. One can then describe
$\mu_0$ as the stationary measure for the Markov chain
on $\Omega$ defined by the normalized potential $N(0)$.
When $T$ is $d$-to-one, then $-\log d$ is obviously
normalized and in the class of $0$ modulo $\fspace{C}$, so that
the measure of maximal entropy is the stationary measure
for the uniform random walk on backward orbits of $T$.

However, this hides some complications when points of
$\Omega$ do not
all have the same number of inverse images under the action of $T$:
it might then be quite difficult to express $N(0)$.
\end{rema}

\subsection{Gradients and gradient flows}

\subsubsection{Computation of some gradients}\label{sec:gradient}

We can now use the metric $\langle\cdot,\cdot\rangle_A$ to
define the gradients of the functionals $\entropy{}$ and $P_B$.

Note that a weak Riemannian metric such as $\langle\cdot,\cdot\rangle_A$
does not give a gradient to all $C^1$ functionals: indeed
$\langle\cdot,\cdot\rangle_A$ induces
a continuous, one-to-one map from the tangent space of $\fspace{N}$
to its dual, but this map is not onto.\footnote{If it where,
by Banach's isomorphism theorem the map $\zeta\mapsto\langle \zeta,\cdot\rangle_A$ from $\fspace{X}(\Omega)$ to its dual would be
an isomorphism,
which is equivalent to $\langle\cdot,\cdot\rangle_A$ being strongly
positive-definite.} Only those functional
whose differential belong to the image of this map will have a gradient.

First, the results
of Section \ref{sec:integral} and the very definition of
the metric yields that $G_\varphi:[A]\mapsto \int \varphi \,\dd\mu_A$
defined on the quotient $\fspace{Q}$
has a gradient:
\begin{align*}
D(G_\varphi)_A(\zeta) &= \langle \varphi,\zeta\rangle_A \\
  &= \langle [\varphi],[\zeta] \rangle_{[A]} \\
\nabla G_\varphi([A]) &= [\varphi].
\end{align*}
Similarly, the function $A\mapsto \int \varphi\,\dd\mu_A$ defined on
$\fspace{N}$ has a gradient at $A$, given by $DN_A(\varphi)$ (recall
that the gradient must be a vector in $T_A\fspace{N} = \ker\op{L}_A$
and that $DN_A$ is precisely the projection on this space
along $\fspace{C}$).

Then, we consider the map $A\mapsto \entropy(\mu_A)$.
As before, we will abusively denote by $\entropy{}$ this
map, as well
as its restriction to $\fspace{N}$ and the map it induces
on $\fspace{Q}$.

From $\entropy(A) =  -\int N(A) \,\dd\mu_A$,
the product rule yields
\begin{align*}
D(\entropy{})_A(\zeta) &= -\int DN_A(\zeta) \,\dd\mu_A
  - \langle N(A),\zeta \rangle_A \\
  &= \langle -A,\zeta \rangle_A
\end{align*}
since $DN_A(\zeta)\in\ker\op{L}_A \subset \ker\mu_A$ and
$\fspace{C}=\ker\langle\cdot,\cdot\rangle_A$.
This computation shows further that $\entropy{}$ (now considered
as induced on $\fspace{Q}$ or restricted to $\fspace{N}$)
has a gradient:
\[\nabla \entropy([A]) = -[A], \quad\mbox{or again}\quad
  \nabla \entropy(A) = -DN_A(A) \ \mbox{ when }A\in\fspace{N}.\]

Observing that $P_B(A) = \entropy(A) + G_B(A)$
we thus proved the following.
\begin{prop}\label{prop:gradients}
The maps $G_\varphi$, $\entropy{}$ and $P_B$
have gradients
for the weak Riemannian metric $\langle\cdot,\cdot\rangle_A$,
given by
\begin{align*}
\nabla G_\varphi([A]) &= [\varphi] &
                          \nabla G_\varphi(A) &= DN_A(\varphi)\\
\nabla \entropy([A]) &= -[A]  & \nabla \entropy(A) &= -DN_A(A) \\
\nabla (P_B) ([A]) &= [B-A] & \nabla (P_B)(A) &= DN_A(B-A)
\end{align*}
where the functionals are considered either on $\fspace{Q}$
(left column) or $\fspace{N}$ (right column).
\end{prop}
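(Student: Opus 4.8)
The plan is to compute each of the three gradients directly from the already-established derivative formulas, exploiting that the weak metric $\langle\cdot,\cdot\rangle_A$ kills $\fspace{C}$ and that $DN_A$ is precisely the projection onto $\ker\op{L}_{N(A)}$ along $\fspace{C}$. Recall that a gradient of a functional $f$ at $A$ with respect to $\langle\cdot,\cdot\rangle_A$ is, by definition, a tangent vector $\nabla f$ such that $Df_A(\zeta) = \langle \nabla f, \zeta\rangle_A$ for all admissible $\zeta$; the content of the proposition is that such a vector exists and has the stated form. So for each functional I would (i) recall its Fr\'echet derivative, (ii) rewrite that derivative as $\langle \cdot, \zeta\rangle_A$ using Theorem~\ref{theo:metric}(4), and (iii) read off the gradient.

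First I would treat $G_\varphi:[A]\mapsto \int\varphi\,\dd\mu_A$. By Theorem~\ref{theo:metric}(4) we have $D(G_\varphi)_A(\zeta) = \langle\varphi,\zeta\rangle_A = \langle[\varphi],[\zeta]\rangle_{[A]}$, so on $\fspace{Q}$ the gradient is literally $[\varphi]$. Passing to the realization on $\fspace{N}$: the gradient there must be the unique element of $T_A\fspace{N}=\ker\op{L}_A$ representing $[\varphi]$, which is exactly $DN_A(\varphi)$ since $DN_A$ is the projection onto $\ker\op{L}_{N(A)}=\ker\op{L}_A$ along $\fspace{C}$ (Theorem~\ref{theo:Nmap}), and $\varphi - DN_A(\varphi)\in\fspace{C}\subset\ker\langle\cdot,\cdot\rangle_A$ so the two represent the same linear form under the metric. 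Next, for $\entropy$, I would start from the identity $\entropy(A) = -\int N(A)\,\dd\mu_A$ (Corollary~\ref{coro:vp-entropy}) and differentiate by the product rule, getting $D(\entropy)_A(\zeta) = -\int DN_A(\zeta)\,\dd\mu_A - \langle N(A),\zeta\rangle_A$; the first term vanishes because $DN_A(\zeta)\in\ker\op{L}_A\subset\ker\mu_A$, and since $A - N(A)\in\fspace{C}$ sits in the kernel of the metric we may replace $N(A)$ by $A$, obtaining $D(\entropy)_A(\zeta) = \langle -A,\zeta\rangle_A$. Hence $\nabla\entropy([A]) = -[A]$, and $\nabla\entropy(A) = -DN_A(A)$ on $\fspace{N}$ by the same projection argument. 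Finally, writing $P_B(A) = \entropy(A) + G_B(A)$ and adding gradients gives $\nabla(P_B)([A]) = [B-A]$ and $\nabla(P_B)(A) = DN_A(B-A)$.

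The only genuine subtlety — and the step I would be most careful about — is the passage between the two descriptions (on $\fspace{Q}$ versus on $\fspace{N}$): one must check that $DN_A$ really is the correct ``lift'' of the class-level gradient to a tangent vector of $\fspace{N}$, i.e. that for $\zeta\in T_A\fspace{N}$ one has $Df_A(\zeta) = \langle DN_A(\text{representative}),\zeta\rangle_A$. This works precisely because $\langle\cdot,\cdot\rangle_A$ has kernel $\fspace{C}$ (Proposition~\ref{prop:isotropy}) and $\fspace{X}(\Omega) = \ker\op{L}_{N(A)}\oplus\fspace{C}$ (Proposition~\ref{prop:LC}), so restricting the metric to $T_A\fspace{N}=\ker\op{L}_{N(A)}$ gives a genuine (weak) inner product and the projection $DN_A$ is an isometric realization of the quotient. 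One should also note, as the paper does in its footnote, that existence of a gradient is not automatic for a weak metric; here it holds because in each case the differential has been exhibited explicitly in the form $\langle v,\cdot\rangle_A$ with $v\in\fspace{X}(\Omega)$, which is exactly the condition for lying in the image of the musical map.
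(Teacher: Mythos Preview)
Your proof is correct and follows essentially the same approach as the paper: compute $D(G_\varphi)_A(\zeta)=\langle\varphi,\zeta\rangle_A$ from Theorem~\ref{theo:metric}(4), differentiate $\entropy(A)=-\int N(A)\,\dd\mu_A$ by the product rule and use $DN_A(\zeta)\in\ker\mu_A$ together with $\fspace{C}=\ker\langle\cdot,\cdot\rangle_A$, then combine for $P_B$. Your additional remarks on why the lift via $DN_A$ is the correct realization on $\fspace{N}$ and why existence of a gradient is not automatic for a weak metric are apt and match the paper's own caveats.
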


\subsubsection{Gradient flow}\label{sec:gradient-flow}

One particularly nice feature of the gradient of the pressure
$P_B$ computed in Section \ref{sec:gradient}  is that
it straightforwardly induces a gradient flow:
for all $[A_0]\in\fspace{Q}$,
there is a differentiable curve $[A_t]$ such that
for all $t$
\[\frac{\dd}{\dd t} [A_t] = \nabla(P_B)(A_t).\]
Indeed, a solution is given by
\[[A_t] = e^{-t}[A_0-B] + [B]. \]

Let us give a physical interpretation when $T$ is the shift:
we consider a system consisting of a $\mathbb{Z}$-lattice of particles,
a potential $A_0$ then represents a combination of the interaction
(and self-interaction) energy of the particles
and of the temperature, the Gibbs
measure $\mu_{A_0}$ is an equilibrium state (which minimizes
the ``free energy'' $-P_{A_0}$)
and represents the macroscopic state of the system at equilibrium.
Assume now that this system interactions changes instantly
to be now described by the potential $B$. The gradient flow above
is a natural and simple model for the evolution of the macroscopic state
of the system, where the systems evolves ``driven'' by $B$. Note
that in this interpretation, the state of the system out of equilibrium
is an equilibrium state for a varying potential.

\begin{rema}\label{rema:temperature}
Let us consider a particular case, where the interactions are
constant and only the temperature changes: $A_0 = \frac{1}{T_0} \varphi$
and $B= \frac{1}{T_1} \varphi$ for some $\varphi\in\fspace{X}(\Omega)$;
this corresponds to a system in contact with a heat bath whose temperature
changes suddenly. According to our model, the system then evolves only
 in its
temperature, as
\[ [A_t]=e^{-t}[A_0-B] + [B] = \Big(e^{-t}\big(\frac{1}{T_0}-\frac{1}{T_1}\big)+\frac{1}{T_1}\Big)[\varphi] \]
will be proportional to $[\varphi]$ for all $t$.
Note that here, $t$ should not be considered as the time
as the speed of evolution of temperature would not be right.
It might be possible to give a physical interpretation to
the parameter $t$, or to rescale the functional
$P_B$ and the metric in a way to obtain a physically
sound evolution of the temperature.
\end{rema}

\begin{rema}
Beware that this gradient flow really takes place on $\fspace{Q}$
(or equivalently, on $\fspace{N}$): it is not defined on the whole
of $\fspace{X}(\Omega)$ because there the metric has a non-trivial
kernel. Also, we cannot see this gradient flow as taking place in the
set of invariant measures with the Wasserstein structure,
because of Section \ref{sec:Was}:
the Gibbs map is not differentiable, and when $(A_t)$ is a integral
curve of our gradient flow, the curve $(\mu_{A_t})$ is not absolutely
continuous (Theorem \ref{theo:roughness})
and in particular not a gradient flow curve in the sense
of Ambrosio, Gigli and Savar\'e \cite{AGS}.
\end{rema}

\subsection{Prescribing integrals}\label{sec:prescribing}

In this section we study how one can find Gibbs measures
with prescribed values for the integrals of a given set of
test functions. This is both an application of the tools
we introduced here (in particular, the weak metric of
Section \ref{sec:metric} makes the proof quite easy), and a
main ingredient in the proof of existence and uniqueness
of equilibrium states under linear constraints.

Fix a tuple of test functions
$\Phi=(\varphi_1,\dots,\varphi_K)\in\fspace{X}(\Omega)^K$; we want to
study the set $\Rot(\Phi)$ of possible values taken by the
\emph{rotation vector}
\[\rv(\mu)=\big(\int \varphi_1\,\dd\mu, \dots, \int\varphi_K\,\dd\mu\big)
 \in\mathbb{R}^K \]
where $\mu$ runs over the set $\spacem{P}_T$ of $T$-invariant probability
measures, and the freedom one has to prescribe the values
of these integral with respect to a \emph{Gibbs} measure.

It is well-known and straightforward that $\Rot(\Phi)$ is convex;
it must also be bounded since potentials are assumed to be bounded by
\ref{hypo:Banach}.

Observe that if the classes modulo $\fspace{C}$
of the $\varphi_k$ are linearly
dependent, then their integrals with respect to any invariant measure
must satisfy a linear relation. Let us be more specific:
if $g-g\circ T+c$ is any element of $\fspace{C}$ and
$\mu$ is any $T$-invariant probability measure, then
$\int (g-g\circ T+c) \,\dd\mu = c$.
Therefore, if there is a non-trivial relation $\sum x_k [\varphi_k] = 0$
then there are $g\in\fspace{X}$ and $c\in\mathbb{R}$ such that
$\sum x_k \varphi_k = g-g\circ T + c$
and for all $\mu\in\spacem{P}_T$ we get the relation
$\sum x_k \int \varphi_k\,\dd\mu = c$,
constraining the vector of integrals
to an affine subspace of $\mathbb{R}^K$.
But this constraint on the rotation vector
can be worked out from the $\varphi_k$, and one can restrict to a maximal
subset of indexes
$S\subset \{1,\dots,K\}$ such that the family
$([\varphi_k])_{k\in S}$ is linearly independent.
Then the corresponding integrals will determine the integrals
of all $\varphi_k$. This procedure reduces the problem
to the case when the $[\varphi_k]$ are linearly independent,
which we will always assume in the sequel.

We then get the following (which does not pretend to much originality,
see \cite{KW1} and \cite{Jenkinson}; note that
our proof is close to the one by Kucherenko and Wolf,
but the metric $\langle\cdot,\cdot\rangle_A$ makes the injectivity
of the Jacobian obvious and we use a differential-geometric argument to
show that the map is onto).
\begin{theo}\label{theo:prescribing}
Let $\Phi=(\varphi_1,\dots,\varphi_K)\in\fspace{X}(\Omega)^K$ be such that the
classes $[\varphi_1],\dots,[\varphi_K]$ modulo $\fspace{C}$
are linearly independent.
Then for all $B\in \fspace{X}(\Omega)$, the map
\begin{align*}
\mathbb{R}^K &\to \interior \Rot(\Phi) \\
(a_1,\dots,a_K) &\mapsto \rv(\mu_{B+a_1\varphi_1+\dots+a_K\varphi_K})
\end{align*}
is an analytic diffeomorphism; in particular $\Rot(\Phi)$
has non-empty interior
and all its interior values are achieved by Gibbs measures.
\end{theo}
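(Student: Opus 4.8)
The plan is to write $\Psi\colon\mathbb{R}^K\to\mathbb{R}^K$ for the map $a=(a_1,\dots,a_K)\mapsto\rv(\mu_{B+\langle a,\Phi\rangle})$, where $\langle a,\Phi\rangle:=\sum_k a_k\varphi_k$, and to recognize it as the gradient of a convex function. Since $a\mapsto B+\langle a,\Phi\rangle$ is affine (hence analytic) into $\fspace{X}(\Omega)$, Corollary \ref{coro:analytic} shows $\Psi$ is analytic; and if we set $f(a):=\log\lambda_{B+\langle a,\Phi\rangle}=\pressure(B+\langle a,\Phi\rangle)$ (Theorem \ref{theo:variational}), then Corollary \ref{coro:eigendata} and the chain rule give $\partial f/\partial a_k=\int\varphi_k\,\dd\mu_{B+\langle a,\Phi\rangle}$, i.e. $\Psi=\nabla f$. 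Everything then follows from the shape of $f$.

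First I would compute the Hessian of $f$. Differentiating once more and invoking item (4) of Theorem \ref{theo:metric}, one finds $\partial^2 f/\partial a_i\partial a_j=\langle\varphi_i,\varphi_j\rangle_A$ with $A=B+\langle a,\Phi\rangle$; that is, the Jacobian of $\Psi$ at $a$ is the Gram matrix of $(\varphi_1,\dots,\varphi_K)$ for the semi-definite form $\langle\cdot,\cdot\rangle_A$. For $x\in\mathbb{R}^K$ the associated quadratic form is $\langle\sum_k x_k\varphi_k,\sum_k x_k\varphi_k\rangle_A$, which by Proposition \ref{prop:isotropy} vanishes only if $\sum_k x_k\varphi_k\in\fspace{C}$, hence --- since the classes $[\varphi_k]$ are assumed linearly independent in $\fspace{Q}$ --- only if $x=0$. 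So the Jacobian of $\Psi$ is positive-definite at every point: $f$ is strictly convex on $\mathbb{R}^K$, $\Psi$ is a local analytic diffeomorphism (analytic inverse function theorem), and $\Psi$ is injective as the gradient of a strictly convex function. Being an open map (local diffeomorphism), its image is open, hence contained in $\interior\Rot(\Phi)$ because $\Psi(\mathbb{R}^K)\subseteq\Rot(\Phi)$ trivially.

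The remaining, and main, point is to show $\Psi$ maps onto $\interior\Rot(\Phi)$. Fix $r\in\interior\Rot(\Phi)$; I would show the convex analytic function $g_r:=f-\langle r,\cdot\rangle$ is coercive, so it attains its minimum at some $a^\ast\in\mathbb{R}^K$, where $\nabla g_r(a^\ast)=0$, i.e. $\Psi(a^\ast)=r$. For coercivity, pick $\varepsilon>0$ with $\overline{B}(r,\varepsilon)\subseteq\Rot(\Phi)$; for each unit vector $\theta$ there is $\mu_\theta\in\spacem{P}_T(\Omega)$ with $\rv(\mu_\theta)=r+\varepsilon\theta$, and the variational inequality $\pressure(A)\ge\entropy(\mu_\theta)+\int A\,\dd\mu_\theta$ applied with $A=B+\langle a,\Phi\rangle$, together with $\entropy\ge 0$ (immediate from $\entropy(\mu)=\inf_{A\in\fspace{N}}\int(-A)\,\dd\mu$ and the non-positivity of normalized potentials) and $\int B\,\dd\mu_\theta\ge-\lVert B\rVert_\infty$, yields $f(a)\ge-\lVert B\rVert_\infty+\langle r,a\rangle+\varepsilon\langle\theta,a\rangle$. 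Taking $a=s\theta$, $s\ge 0$, gives $g_r(s\theta)\ge-\lVert B\rVert_\infty+\varepsilon s$, a bound uniform in $\theta$; so $g_r(a)\to+\infty$ as $\lvert a\rvert\to\infty$, and a continuous coercive function on $\mathbb{R}^K$ attains a minimum.

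Assembling: $\Psi$ is analytic, bijective from $\mathbb{R}^K$ onto $\interior\Rot(\Phi)$, with everywhere-invertible derivative, hence an analytic diffeomorphism by the inverse function theorem; in particular $\interior\Rot(\Phi)=\Psi(\mathbb{R}^K)$ is non-empty and each of its points is the rotation vector of a Gibbs measure. The delicate step is the surjectivity, i.e. the coercivity estimate: it is there that the boundedness below of entropy and the variational principle of Theorem \ref{theo:variational} enter, and that the hypothesis $r\in\interior\Rot(\Phi)$ (as opposed to merely $r\in\Rot(\Phi)$) is used.
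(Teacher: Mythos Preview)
Your proof is correct, and it takes a genuinely different route from the paper's.

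Both arguments begin the same way: the Jacobian of the map at $a$ is the Gram matrix $(\langle\varphi_i,\varphi_j\rangle_{B+\langle a,\Phi\rangle})_{i,j}$, which is invertible by linear independence of the $[\varphi_k]$, so the map is a local analytic diffeomorphism and its (open) image lies in $\interior\Rot(\Phi)$. The divergence is in the passage from local to global. The paper invokes a theorem of Gordon stating that a local diffeomorphism $\mathbb{R}^K\to\interior\Rot(\Phi)$ which is proper is a global diffeomorphism; properness is then checked by a limiting argument: if $\bar x^{(n)}\to\infty$ along a direction $\bar u$, the variational principle forces accumulation points of $\mu_{I(\bar x^{(n)})}$ to be \emph{maximizing measures} for $\sum u_k\varphi_k$, whose rotation vectors lie on $\partial\Rot(\Phi)$.

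You instead exploit the extra structure that $\Psi=\nabla f$ with $f(a)=\log\lambda_{B+\langle a,\Phi\rangle}$ strictly convex on $\mathbb{R}^K$: injectivity is then immediate, and surjectivity onto $\interior\Rot(\Phi)$ follows from the coercivity of $f-\langle r,\cdot\rangle$, obtained by testing the variational inequality $\pressure\ge\entropy+\int(\cdot)\,\dd\mu$ against invariant measures with rotation vector $r+\varepsilon\theta$ and using $\entropy\ge 0$. This is more elementary and entirely self-contained: it avoids the external reference to Gordon's theorem and the weak-convergence argument about maximizing measures. The paper's topological approach would transfer to settings where the map is not a gradient, but in the present situation your convex-analytic framing is the natural one and yields a cleaner argument.
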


\begin{proof}
Consider the analytic maps
\begin{align*}
I : \mathbb{R}^K &\to \fspace{X}(\Omega) \\
  \bar\alpha = (\alpha_1,\dots,\alpha_K) &\mapsto B+\sum \alpha_k \varphi_k
\end{align*}
where $B$ is any fixed potential,
\begin{align*}
J : \fspace{X}(\Omega) &\to \mathbb{R}^K \\
  A &\mapsto \big(\int \varphi_1\,\dd\mu_A, \dots,
     \int\varphi_K\,\dd\mu_A\big)
\end{align*}
and their composition $L = J\circ I :\mathbb{R}^K\to \mathbb{R}^K$.
We also denote by $L_k$ the $k$-th component of $L$, i.e.
$L_k(\bar\alpha) = \int\varphi_k \,\dd\mu_{I(\bar\alpha)}$.

The differential of
$L$ is given by Sections \ref{sec:integral} and \ref{sec:metric}:
\[\frac{\partial L_k}{\partial x_j}(\bar\alpha)
  = \langle[\varphi_k],[\varphi_j]\rangle_{I(\bar\alpha)}.\]
This defines a Gram matrix, which is invertible
since the $[\varphi_k]$ are linearly
independent; it follows from the local inverse function theorem that
$L$ is a local diffeomorphism.

If $B$ is any potential, this implies that
$L(B)$ is in the interior of the image of $L$, in particular
in the interior of $\Rot(\Phi)$ (which must thus be non-empty).

What we have left to prove is
that $L$ is a \emph{global} diffeomorphism
from $\mathbb{R}^K$ to $\interior\Rot(\Phi)$.
Since that interior is diffeomorphic to $\mathbb{R}^K$, a Theorem of
\cite{Go}  reduces this to prove that $L$ is proper when its codomain
is taken to be $ \interior\Rot(\Phi)$, i.e. that whenever
a sequence $\bar x^{(n)}$ escapes compacts of $\mathbb{R}^K$, the points
$L(\bar x^{(n)})$ escapes the compacts of
$\interior\Rot(\Phi)$.
In other words, we want to prove that if
$\bar x^{(n)}\to\infty$ and $L(\bar x^{(n)})$ converges, the limit
lies on $\partial \Rot(\Phi)$.

Now, if $\bar x^{(n)}\to \infty$ and $L(\bar x^{(n)})$ converges,
up to taking a subsequence
we can assume that $\bar x_{(n)} = t_n \bar u + o(t_n)$
where $(t_n)$ is a diverging sequence of positive numbers,
and $\bar u$ is a unit vector in $\mathbb{R}^K$ (this is simply the
compactness of the unit sphere).

Observe that if $\bar x$ is a boundary point of $\Rot(\Phi)$ and
$\Phi= \sum y_k e_k^*$ (where $(e_k^*)$ is the canonical
dual basis)
is a linear form of $\mathbb{R}^K$ whose maximum on $\Rot(\Phi)$
is reached at $\bar x$, then
  \[\Phi(\bar x) = \max\Big\{\int \sum y_k \varphi_k \,\dd\mu
    \,\Big|\, \mu\in \spacem{P}_T \Big\},\]
and reciprocally points maximizing a linear form must lie on
the boundary.

Back to $L(\bar x^{(n)})$, we have
$I(\bar x^{(n)}) = t_n \varphi_{\bar u} + o(t_n)$
 where $\varphi_{\bar u} = \sum u_k \varphi_k$.
The variational principle tells us that
$\mu_{I(\bar x^{(n)})}$ maximizes
$\entropy(\mu)+\int (t_n\varphi_{\bar u} +o(t_n)) \,\dd\mu$
and it follows that the accumulation points of
this sequence of measures are all maximizing measures
of $\varphi_{\bar u}$. This precisely means
that the limit of $L(\bar x^{(n)})$ is a boundary point,
and we are done.
\end{proof}

As a by-product of this result, we get the following.
\begin{coro}\label{coro:weakdensity}
If $\fspace{X}(\Omega)$ is separable,\footnote{Or more generally
if in \ref{hypo:large} the approximation can be obtained
from a fixed countable subset of $\fspace{X}$}
then the set of Gibbs measures
$G(\fspace{X}(\Omega))$ is weakly dense in $\spacem{P}_T(\Omega)$.
\end{coro}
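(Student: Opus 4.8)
The plan is to deduce the statement from Theorem \ref{theo:prescribing} and the convexity of rotation sets, after reducing weak density to an approximation of finitely many integrals. Since $\Omega$ is compact metric, the weak topology on $\spacem{P}(\Omega)$ is metrizable, and by \ref{hypo:large} every continuous function on $\Omega$ is a uniform limit of elements of $\fspace{X}(\Omega)$; as the $\fspace{X}$-norm dominates the sup norm and we only deal with probability measures, it follows that the sets $\{\nu\in\spacem{P}(\Omega):|\int\varphi_k\,\dd\nu-\int\varphi_k\,\dd\mu|<\varepsilon\text{ for }k=1,\dots,K\}$, with $\varphi_k\in\fspace{X}(\Omega)$, $\varepsilon>0$, $K\in\bN$, form a neighborhood basis of $\mu$ for the weak topology (separability moreover lets us extract such a basis from one fixed countable family, which is what the footnote refers to). Thus it suffices to prove: for every $\mu\in\spacem{P}_T(\Omega)$, every tuple $\Phi=(\varphi_1,\dots,\varphi_K)$ in $\fspace{X}(\Omega)$ and every $\varepsilon>0$, there is $A\in\fspace{X}(\Omega)$ with $|\int\varphi_k\,\dd\mu_A-\int\varphi_k\,\dd\mu|<\varepsilon$ for all $k$.

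To prove this, I first reduce to the case where $[\varphi_1],\dots,[\varphi_K]$ are linearly independent in $\fspace{Q}$, exactly as in the discussion preceding Theorem \ref{theo:prescribing}: choosing a maximal subset $S$ for which $([\varphi_k])_{k\in S}$ is independent and writing $\varphi_j\equiv\sum_{k\in S}c_{jk}\varphi_k\pmod{\fspace{C}}$ for $j\notin S$, one has $\int\varphi_j\,\dd\rho=\sum_{k\in S}c_{jk}\int\varphi_k\,\dd\rho+c_j$ for every $T$-invariant $\rho$, so an approximation of the integrals of the $\varphi_k$, $k\in S$, with a small enough tolerance (depending only on $\varepsilon$ and the finitely many constants $c_{jk}$) yields an $\varepsilon$-approximation of all the integrals. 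Assuming then the classes independent, Theorem \ref{theo:prescribing} (with, say, $B=0$) tells us that the map $(a_1,\dots,a_K)\mapsto\rv(\mu_{a_1\varphi_1+\dots+a_K\varphi_K})$ is a diffeomorphism onto $\interior\Rot(\Phi)$; in particular every interior point of $\Rot(\Phi)$ is realized as the rotation vector of a Gibbs measure, and $\Rot(\Phi)$ is a convex subset of $\mathbb{R}^K$ with non-empty interior.

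It remains to combine these facts. Since $\mu$ is $T$-invariant, $\rv(\mu)\in\Rot(\Phi)$; and because $\Rot(\Phi)$ is a convex set with non-empty interior, its interior is dense in it, so there is an interior point $\bar x$ with $|\bar x-\rv(\mu)|<\varepsilon$. Write $\bar x=\rv(\mu_A)$ with $A=\sum_{k=1}^K a_k\varphi_k$; then $|\int\varphi_k\,\dd\mu_A-\int\varphi_k\,\dd\mu|<\varepsilon$ for all $k$, which is what we wanted, and this yields the desired weak density. The only slightly delicate point in the argument is that $\rv(\mu)$ may lie on the boundary $\partial\Rot(\Phi)$ — indeed this happens exactly when $\mu$ maximizes some linear functional $\rho\mapsto\int\sum_k y_k\varphi_k\,\dd\rho$ over $\spacem{P}_T(\Omega)$ — so one cannot invoke Theorem \ref{theo:prescribing} verbatim and must pass through the density of the interior of a convex body; everything else is routine.
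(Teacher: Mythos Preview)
Your proof is correct and follows essentially the same approach as the paper: reduce weak approximation to matching finitely many integrals of test functions in $\fspace{X}(\Omega)$, then invoke Theorem~\ref{theo:prescribing}. In fact you are more careful than the paper on two points it glosses over: you explicitly perform the reduction to linearly independent $[\varphi_k]$ (needed to apply Theorem~\ref{theo:prescribing} as stated), and you explicitly handle the possibility that $\rv(\mu)$ lies on $\partial\Rot(\Phi)$ by invoking the density of the interior of a convex body, whereas the paper simply asserts the existence of the approximating $A_K$.
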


\begin{proof}
By assumption, there is a sequence $(\varphi_k)_{k\in\mathbb{N}}$
of elements of $\fspace{X}(\Omega)$ such that all continuous
$f:\Omega\to\mathbb{R}$ is the uniform limit of a subsequence
$(\varphi_{k_i})_{i\in\mathbb{N}}$.

Let $\mu\in\spacem{P}_T(\Omega)$;
from Theorem \ref{theo:prescribing}, for each
$K\in\mathbb{N}$ there is a potential $A_K\in\fspace{X}(\Omega)$
such that
\[\Big\lvert \int\varphi_k \,\dd\mu_{A_K} - \int \varphi_k \,\dd\mu
\Big\rvert <\frac{1}{K} \quad \forall k\in\{1,\dots,K\}.\]

Given any continuous $f:\Omega\to\mathbb{R}$ and any $\varepsilon>0$,
there is some $k_0$ such that
$\lVert f-\varphi_{k_0}\rVert_\infty \le \varepsilon$. For all
$K\ge \max(k_0,\frac{1}{\varepsilon})$ we thus have
\begin{align*}
\Big\lvert \int f \,\dd\mu_{A_K} - \int f\,\dd\mu \Big\rvert
  &\le \Big\lvert \int f \,\dd\mu_{A_K} - \int \varphi_{k_0}\,\dd\mu_{A_K}\Big\rvert
   + \Big\lvert \int \varphi_{k_0} \,\dd\mu_{A_K}
      - \int \varphi_{k_0}\,\dd\mu \Big\rvert\\
 &\qquad
 + \Big\lvert \int \varphi_{k_0} \,\dd\mu - \int f\,\dd\mu \Big\rvert \\
   &< 3\varepsilon
\end{align*}
Letting $\varepsilon\to0$, we see that $\int f\,\dd\mu_{A_K}\to \int f\,\dd\mu$, so that
$(\mu_{A_K})$ converges weakly to $\mu$.
\end{proof}

\subsection{Optimization under constraints}

Our goal here is to optimize the $P_B$ functionals
(for example, the entropy $\entropy{}$)
on natural subsets of
invariant measures, obtained by constraining the integrals
of some functions.
These questions have been considered by Jenkinson \cite{Jenkinson}
in the case of entropy and Kucherenko and Wolf
\cite{KW1,KW2}, with somewhat different assumptions and
methods. We believe that part of our claims are more explicit in some issues.

We fix as before test functions
$\Phi=(\varphi_1,\dots,\varphi_K)\in\fspace{X}(\Omega)^K$ and
we consider the set $\spacem{P}_T[\Phi]$
of $T$-invariant measures $\mu$ such that
$\int \varphi_k \,\dd\mu = 0$ for all $k$; among them are the Gibbs
measures whose normalized potential lies in
\[\fspace{N}[\Phi] := \big\{ A\in \fspace{N}
  \,\big|\, \forall k: \int\varphi_k \,\dd\mu_A = 0 \big\}\]
We will also denote by $\fspace{Q}[\Phi]$
the set of classes $[A]\in\fspace{Q}=\fspace{X}(\Omega)/\fspace{C}$
such that $A\in \fspace{N}[\Phi]$.

With these notation, we will prove the following constrained (or
``localized'') version of the variational principle.
\begin{theo}\label{theo:constraints}
Let $\Phi=(\varphi_1,\dots,\varphi_K), B\in\fspace{X}(\Omega)^K$ be
such that the $[\varphi_k]$ are linearly independent, and
such that $0$ is an interior vector of $\Rot(\Phi)$.
For each $B\in\fspace{X}(\Omega)$ denote by $B_0$ the
unique element $B_0=B+a_1\varphi_1+\dots+a_K\varphi_K$
such that $[B_0]\in \fspace{Q}[\Phi]$
(Theorem \ref{theo:prescribing}).

Then $\mu_{B_0}$ uniquely maximizes
$P_B$ over $\spacem{P}_T[\Phi]$, and the
value of the maximum is
$P_B(B_0)=\log\lambda_{B_0}$.
\end{theo}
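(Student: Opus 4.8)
The plan is to reduce the constrained optimization on $\spacem{P}_T[\Phi]$ to the unconstrained variational principle of Theorem \ref{theo:variational}, using a Lagrange-multiplier-type argument that is here made rigorous simply by the fact that $B_0 = B + \sum_k a_k\varphi_k$ differs from $B$ by a combination of the $\varphi_k$, which vanish in integral against any $\mu \in \spacem{P}_T[\Phi]$. Concretely, first I would observe that for every $\mu \in \spacem{P}_T[\Phi]$ we have $\int B_0\,\dd\mu = \int B\,\dd\mu + \sum_k a_k \int\varphi_k\,\dd\mu = \int B\,\dd\mu$, so that
\[
P_B(\mu) = \entropy(\mu) + \int B\,\dd\mu = \entropy(\mu) + \int B_0\,\dd\mu = P_{B_0}(\mu)
\qquad \forall\,\mu \in \spacem{P}_T[\Phi].
\]
Thus maximizing $P_B$ over the constrained set $\spacem{P}_T[\Phi]$ is the same as maximizing $P_{B_0}$ over that set.

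Next I would invoke Theorem \ref{theo:variational} applied to the potential $B_0$: among \emph{all} $T$-invariant probability measures, $P_{B_0}$ is uniquely maximized at $\mu_{B_0}$, with maximal value $\log\lambda_{B_0}$. Since $\mu_{B_0}$ lies in $\spacem{P}_T[\Phi]$ — this is exactly the condition $[B_0] \in \fspace{Q}[\Phi]$, i.e. $\int\varphi_k\,\dd\mu_{B_0} = 0$ for all $k$, which is how $B_0$ was selected via Theorem \ref{theo:prescribing} — the global maximizer already belongs to the constrained set. Therefore $\mu_{B_0}$ is a fortiori the unique maximizer of $P_{B_0}$, hence of $P_B$, over the smaller set $\spacem{P}_T[\Phi]$, and the value of the maximum is $P_B(\mu_{B_0}) = P_{B_0}(\mu_{B_0}) = \log\lambda_{B_0}$. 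Uniqueness on the constrained set is immediate since it is inherited from uniqueness on the larger set.

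The only genuinely nontrivial input is the existence and uniqueness of $B_0$ itself, i.e. that there is exactly one vector $(a_1,\dots,a_K)$ for which $\rv(\mu_{B + \sum a_k\varphi_k}) = 0$; but this is precisely Theorem \ref{theo:prescribing}, whose hypotheses are met here: the $[\varphi_k]$ are assumed linearly independent and $0 \in \interior\Rot(\Phi)$, so $0$ is an achieved interior value and the analytic diffeomorphism $(a_1,\dots,a_K) \mapsto \rv(\mu_{B+\sum a_k\varphi_k})$ hits it exactly once. So there is essentially no obstacle left in the present statement: the work was already done in Theorems \ref{theo:prescribing} and \ref{theo:variational}, and the proof here is the short observation that the constraint kills the difference between $B$ and $B_0$ while the $B_0$-equilibrium state happens to satisfy the constraint. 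If one wanted to be careful, the one point to double-check is that $\spacem{P}_T[\Phi]$ is nonempty and that $P_B$ restricted to it is not identically $-\infty$, which again follows from $0 \in \interior\Rot(\Phi)$ and from Corollary \ref{coro:vp-entropy} giving $\entropy(\mu_{B_0}) > 0$.
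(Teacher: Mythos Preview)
Your proof is correct and follows essentially the same approach as the paper: observe that $P_B$ and $P_{B_0}$ agree on $\spacem{P}_T[\Phi]$ because the $\varphi_k$ integrate to zero there, then apply the unconstrained variational principle (Theorem \ref{theo:variational}) to $B_0$ and note that its equilibrium state $\mu_{B_0}$ already satisfies the constraint by construction. Your write-up is simply a bit more explicit about why $\mu_{B_0}\in\spacem{P}_T[\Phi]$ and about the role of Theorem \ref{theo:prescribing}, but the argument is the same.
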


\begin{proof}
We simply observe that for all
$\mu\in\spacem{P}_T[\Phi]$ we have
\[P_B(\mu) = \entropy(\mu) + \int (B_0-a_1\varphi_1-\dots-a_K\varphi_K)
     \,\dd\mu = \entropy(\mu) + \int B_0 \,\dd\mu = P_{B_0}(\mu).
\]
Applying Theorem \ref{theo:variational} to $P_{B_0}$ we see that
$P_B(\mu_{B_0})=P_{B_0}(\mu_{B_0})=\log \lambda_{B_0}$
is greater than $P_B(\mu)=P_{B_0}(\mu)$ whenever $\mu\neq \mu_{B_0}$
is in $\spacem{P}_T[\Phi]$.
\end{proof}

We can use this to recover in our setting another result from \cite{KW1}.
\begin{coro}\label{coro:KW1}
Let $\Phi=(\varphi_1,\dots,\varphi_K)\in\fspace{X}(\Omega)$,
such that the $[\varphi_k]$ are linearly independent and $B\in\fspace{X}(\Omega)$, and for $w\in\interior \Rot(\Phi)$ define
\[H(w)=\sup\{\entropy(\mu) ; \rv(\mu)=w\}.\]
Then $H$ is a positive, analytic map.
\end{coro}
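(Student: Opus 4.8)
The plan is to write $H$ as a composition of analytic maps already at our disposal, combining the constrained variational principle (Theorem \ref{theo:constraints}) with the parametrization of interior rotation vectors by Gibbs measures (Theorem \ref{theo:prescribing}).

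First I would fix $w\in\interior\Rot(\Phi)$ and pass to the shifted test functions $\psi_k:=\varphi_k-w_k$, with $\Psi=(\psi_1,\dots,\psi_K)$. Since constants belong to $\fspace{C}$ we have $[\psi_k]=[\varphi_k]$, so the classes $[\psi_k]$ are still linearly independent; the rotation vector for $\Psi$ is $\rv(\mu)-w$, so $\Rot(\Psi)=\Rot(\Phi)-w$ and $0\in\interior\Rot(\Psi)$; and $\spacem{P}_T[\Psi]=\{\mu\in\spacem{P}_T(\Omega):\rv(\mu)=w\}$ while $P_0=\entropy$. Applying Theorem \ref{theo:constraints} with $B=0$ and test functions $\Psi$, the supremum defining $H(w)$ is attained, uniquely, at the Gibbs measure $\mu_{B_0(w)}$, where $B_0(w)=\sum_k a_k(w)\psi_k$ is the unique potential of that form with $\int\psi_k\,\dd\mu_{B_0(w)}=0$ for all $k$; moreover $H(w)=\entropy(\mu_{B_0(w)})=\log\lambda_{B_0(w)}$ (the two expressions coincide since $\int B_0(w)\,\dd\mu_{B_0(w)}=\sum_k a_k(w)\int\psi_k\,\dd\mu_{B_0(w)}=0$).

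Next I would check that $w\mapsto B_0(w)$ is analytic. Because $B_0(w)$ differs from $\sum_k a_k(w)\varphi_k$ only by the constant $-\sum_k a_k(w)w_k$, we have $\mu_{B_0(w)}=\mu_{\sum_k a_k(w)\varphi_k}$, and therefore $\rv\big(\mu_{\sum_k a_k(w)\varphi_k}\big)=w$. Hence $w\mapsto(a_1(w),\dots,a_K(w))$ is precisely the inverse of the analytic diffeomorphism $\mathbb{R}^K\to\interior\Rot(\Phi)$, $\bar a\mapsto\rv\big(\mu_{\sum_k a_k\varphi_k}\big)$, provided by Theorem \ref{theo:prescribing} with $B=0$; it is thus analytic, and so is $w\mapsto B_0(w)=\sum_k a_k(w)\varphi_k-\big(\sum_k a_k(w)w_k\big)\mathbf{1}\in\fspace{X}(\Omega)$. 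Composing with the analytic functional $\log\Lambda:\fspace{X}(\Omega)\to\mathbb{R}$ (Corollary \ref{coro:eigendata}) we obtain that $H(w)=\log\lambda_{B_0(w)}$ is analytic on $\interior\Rot(\Phi)$. Equivalently, one may compose $w\mapsto B_0(w)$ with the analytic functional $A\mapsto\entropy(\mu_A)=\log\lambda_A-\int A\,\dd\mu_A$, analytic by Corollaries \ref{coro:eigendata} and \ref{coro:analytic}.

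Finally, positivity is immediate: $H(w)=\entropy(\mu_{B_0(w)})$ is the entropy of a Gibbs measure, and by Corollary \ref{coro:vp-entropy} together with the remark following it one has $\entropy(\mu_A)>0$ for every potential $A$ (a normalized representative is non-positive and non-zero, the strict inequality using \ref{hypo:support}). The one point requiring genuine care is the identification that the coefficients $a_k(w)$ produced by Theorem \ref{theo:constraints} are exactly those given by Theorem \ref{theo:prescribing} at $B=0$; this rests on $B_0(w)$ and $\sum_k a_k(w)\varphi_k$ sharing the same Gibbs measure, together with the fact that the defining constraint $\int\psi_k\,\dd\mu_{B_0(w)}=0$ is nothing but $\rv(\mu_{B_0(w)})=w$. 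Everything else is a routine composition of analytic maps.
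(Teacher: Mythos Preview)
Your proof is correct and follows essentially the same route as the paper: both apply Theorem~\ref{theo:prescribing} with $B=0$ to obtain analytic coefficients $a_k(w)$, then invoke Theorem~\ref{theo:constraints} on the shifted test functions $\varphi_k-w_k$ to identify $H(w)$ as the entropy of the resulting Gibbs measure, expressed via $\log\Lambda$. Your version is slightly more explicit in justifying why the coefficients from the two theorems coincide and in addressing positivity, but the substance is the same.
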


\begin{proof}
By Theorems \ref{theo:prescribing} we know that there are
uniquely defined analytic functions
$a_k:\interior C\to\mathbb{R}$ such that
\[\rv\big(\mu_{a_1(w)\varphi_1+\dots+a_K(w)\varphi_K}\big)=w\quad\forall w\]
Setting $A(w)=a_1(w)\varphi_1+\dots+a_K(w)\varphi_K$ and
applying Theorem \ref{theo:constraints} to
$(\varphi_1-w_1,\dots, \varphi_K-w_K)$ we obtain
\[H(w) = \entropy(\mu_{A(w)}) =
\log\Lambda(A(w))-a_1(w)w_1-\dots-a_K(w)w_K,\]
proving the claim.
\end{proof}

\begin{rema}\label{rema:constraints}
Assume that $T$ is the shift over a finite alphabet and
$\fspace{X}=\Holder_\alpha$ (recall that
$h(\mu)= \entropy (\mu)$ in this case, Remark \ref{rema:definition2}).
Let $n$ be any positive integer,
and let $\Phi=(\varphi_1,\dots,\varphi_K)$
and $B$
be H\"older functions that only depends on the first $n$ coordinates, and
such that $\fspace{Q}[\Phi]$ is
non empty.

Then we claim that there is a unique measure maximizing
$P_B(\mu)$ among all elements of
$\spacem{P}_T[\Phi]$, and that this measure is a $(n-1)$-steps Markov measure
(i.e. a Gibbs measure $\mu_A$ such that $N(A)$
only depends on the first $n$ coordinates).
In particular, applying this to $B=0$, there is a $(n-1)$-steps
Markov measure
maximizing the entropy subject to any finite set of
simultaneously satisfiable constraints
$\int\varphi_k\,\dd\mu = 0$
whenever the $\varphi_k$ are constant on cylinder of depth $n$.

\begin{proof}
The only point that does not follow immediately from Theorem
\ref{theo:constraints} is that $\mu_A$ is $n$-Markov. But we know
that we can take $A=B+\sum x_k \varphi_k$ for some $(x_k)$;
notice that this $A$ might not be normalized, but is constant
on each depth-$n$ cylinder.

Now, $\op{L}_A$ preserves the subspace of $\fspace{X}(\Omega)$
made of functions that only depend on the first $(n-1)$
coordinates. In particular, for all $N$ the function
$\op{L}_A^N(\mathbf{1})$ only depends on the first $(n-1)$
coordinates. Since this is a closed space, the leading eigenfunction
$h_A$ only depends on the first $(n-1)$ coordinates, and
$h_A\circ T$ only depends on the first $n$ coordinates.

Now $N(A) = A + \log h_A - \log h_A\circ T -\log \lambda_A$
only depend on the first $n$ coordinates, which precisely means that
$\mu_A$ is $(n-1)$-steps Markov.
\end{proof}
\end{rema}

Let us give a couple of examples, which we will not make as general
as possible
but we will intentionally keep very explicit.
Let $\Omega=\{0,1\}^{\mathbb{N}}$, $T$ be the shift and
$\fspace{X}(\Omega)$ be a space of H\"older functions for
one of the usual metrics of $\Omega$. Given any
finite word $\omega$, let $\omega*$ be the cylinder
defined by $\omega$, i.e. the set of words starting with $\omega$.

\begin{exem}
Among shift-invariant measures $\mu$ such that
$\mu(0*)=.9$, the Bernoulli measure of parameter $.9$
(i.e. the law of the word $\alpha_1 \alpha_2 \dots$
where the $\alpha_j$ are i.i.d. random variables
taking the value $0$ with probability $.9$)
maximizes entropy.

Indeed, from Remark \ref{rema:constraints} we know that
there is a Bernoulli measure realizing this maximum, and
the Bernoulli measure with parameter $.9$ is the only one
to satisfy the constraint.
\end{exem}

\begin{exem}
Among shift-invariant measures $\mu$ such that
$\mu(01*) = 2\mu(11*)$, the Markov measure associated
to the transition probabilities
\begin{align*}
\mathbb{P}(0\to 0) &= 1-a & \mathbb{P}(0\to1) &= a \\
\mathbb{P}(1\to0) &= \frac23 & \mathbb{P}(1\to1) &= \frac13
\end{align*}
where $a$ is the only real solution to
\[(1-a)^5=\frac{4}{27} a^2 \qquad (a\simeq 0.487803)\]
maximizes entropy.

It is easily seen that the constraint is satisfiable by
a Markov measure, in particular by a Gibbs measure, thus we can
apply Remark \ref{rema:constraints}
to $B=0$, $K=1$ and $\varphi=\mathbf{1}_{10*}-2\cdot \mathbf{1}_{11*}$
where $\mathbf{1}_S$ is the indicator function of the set $S$.

The constraints easily translates into
$\mathbb{P}(1\to0) = \frac23$, and we define
$a= \mathbb{P}(0\to1)$. We know that the Gibbs entropy maximizing
measure is given by a potential of the form $A= x \varphi$ where
$x\in\mathbb{R}$; to translate this into the transition probabilities,
we only have to normalize $A$:
\[N(A) = x\varphi + \log h - \log h\circ T + \log \lambda\]
where $\lambda\in \mathbb{R}$ and $h$ only depends on the first coordinates
and matters only up to a multiplicative constant; we thus define
$\alpha = h(0*)/h(1*)$. Letting $\eta=e^x$, we then recover the transitions
probabilities as follows:
\begin{align*}
\mathbb{P}(0\to 0) &= e^{N(A)(00*)} = \lambda \\
\mathbb{P}(0\to 1) &= e^{N(A)(10*)} = \eta \alpha^{-1} \lambda \\
\mathbb{P}(1\to 0) &= e^{N(A)(01*)} = \alpha \lambda \\
\mathbb{P}(1\to 1) &= e^{N(A)(11*)} = \eta^{-2} \lambda
\end{align*}
We then have to solve the system
\[\left\{ \begin{array}{rcl}
1-a  &= & \lambda \\
a   &= & \eta \alpha^{-1} \lambda  \\
2/3 &= &\alpha\lambda \\
1/3 &= &\eta^{-2} \lambda
\end{array}\right.\]
This will give the only $\eta$ such that $\mu_A$ with the above $A$
satisfies the constraint, and from Remark \ref{rema:constraints}
we know that $\mu_A$ maximizes entropy under this constraint;
then the corresponding value of $a$ gives the transition probability
we seek. Note that, while we have some computation to do,
we do not have to estimate the actual entropy of Markov measures, nor
do we have to compute directly the eigendata of $\op{L}_A$.

The above system is easily solved by substitution:
$\lambda = 1-a$, then $\alpha=2/(3(1-a))$,
$\eta = 2a/(3(1-a)^2)$ and finally the last equation yields
$\big[2a/(3(1-a)^2)\big]^2 = 3(1-a)$, so that
$(1-a)^5=\frac{4}{27} a^2$.
\end{exem}

\section{Explicit computations for a restricted model}
\label{sec:2symbols}

In this section we explicitly show an example of the construction of section \ref{sec:metric} and some of its consequences. The dynamic we consider is the shift acting on the space $\{1,2\}^\mathbb{N}$.  We choose $\fspace{X}$ to be the space of $\alpha$-Holder functions for any $\alpha$, and denote by
$\fspace{X}_2$ the subset of potentials which depend only on the first two coordinates (of elements in $\{1,2\}^\mathbb{N}$). Note that we formally
cannot take $\fspace{X}_2$ as our full space of potentials, since it is not
invariant under composition by $T$.
It is easy to check that, in this setting,  \ref{hypo:Banach}-\ref{hypo:large} are satisfied (or one can find all the details in \cite{PP}).

\subsection{A positively curved metric}

If a potential $A \in \fspace{X}_2$ depends just on two coordinates then we can write $A(i,j)$ for the value of $A$ evaluated on the cylinder $ij\ast$ (i.e. the elements of
$\{1,2\}^\mathbb{N}$ of the type $ij\cdots$),
and we shall identify $\fspace{X}_2$ with the space of $2$ by $2$
real matrices.
The value of $e^A$ is well defined on such a cylinder,
and the action of the operator $\op{L}_A$
on potentials $\varphi$ depending
only on the first coordinate explicitly reads
\[ (\op{L}_{A}\phi(1\ast), \op{L}_{A}\phi(2\ast) )  =
\begin{pmatrix} \phi(1\ast) & \phi(2\ast) \end{pmatrix}
\begin{pmatrix}
           e^{ A_{11}}  & e^{ A_{12}} \\ e^{ A_{21}} & e^{ A_{22}}
\end{pmatrix}
\]
We can thus think of the operator $\op{L}_{A}$ as acting on a function as a left multiplication of the matrix, and we shall denote by $L$
the map
\[L : A=\begin{pmatrix} A_{11} & A_{12} \\ A_{21} & A_{22} \end{pmatrix}
  \mapsto L_A = \begin{pmatrix} e^{ A_{11}}  & e^{ A_{12}} \\
  e^{ A_{21}} & e^{ A_{22}} \end{pmatrix}\]
which exponentiate each coordinates of a matrix $A$, and
identify freely $\op{L}_A$ and $L_A$.

To normalize
the potential, that is to find the potential $\overline{A}:=N(A)$
differing from $A$ by a coboundary and a constant
such that $\op{L}_{\overline{A}}(1)= 1 $, we can apply the Perron-Frobenius theorem\footnote{see for example \cite{Gant} for the exact statement.}  to the matrix $L_A$
and solve with respect to the maximal eigenvalue and the left eigenvector, i.e. $ \ell L_A = \lambda_A \ell$. After the normalization, we obtain
\[
 \overline{A}(i,j) = \frac{e^{A(i,j) }\ell_i}{\lambda_A \ell_j}
\]
From now on, we will assume that $A$ is normalized and avoid the notation
$\overline{A}$.

We observe that the set $N(\fspace{X}_2) =: \fspace{N}_2$
of normalized potentials depending on two coordinates is defined by the equations
\[
 \left\{ \begin{array}{c} e^{ A_{11}} + e^{ A_{21}} = 1 \\ e^{ A_{12}} + e^{ A_{22}} = 1 \end{array} \right.
\]
so that $L(\fspace{N}_2)$ is the set of $2$ by $2$ column stochastic matrix,
denoted by $\fspace{S}_2$.

To sum up, a normalized potential in $\fspace{N}_2$ can be represented by
the matrix of its values on cylinders, subject to a nonlinear system
of constraints, or as a column stochastic matrix after coordinate-wise exponentiation. We thus obtain a natural chart $ S :  [0,1]\times [0,1] \to \fspace{S}_2$ by setting
\[
S(x,y) =
\begin{pmatrix}
x & 1 - y \\ 1- x & y
 \end{pmatrix}.
 \]
where $x,y \in (0,1)$ can be thought of
as transition probabilities $\mathbb{P}[1  \to 1]$ and
$\mathbb{P}[2  \to 2]$, respectively.

This parametrization has the advantage that
$\fspace{S}_2$ is (an open set of) an affine
subspace of $M_{2,2}(\mathbb{R})$: it has the same tangent
space at each point, a basis of which is given by
\[
 \frac{\partial S }{\partial x} = \begin{pmatrix}
1 & 0 \\ -1  &  0
 \end{pmatrix}
\qquad
 \frac{\partial S }{\partial y} = \begin{pmatrix}
0  & -1 \\ 0 & 1
 \end{pmatrix}.
\]
A tangent vector $\psi$ to $\fspace{S}_2$ at $S(x,y)$ shall be
written as $(\psi_1,\psi_2)$ in this basis, so that the corresponding
parametrized line $\gamma$
can be expressed for $s \in \mathbb{R}$ sufficiently small by
\[
\gamma_{S(x,y),\psi}(s) =
\begin{pmatrix}
x + s\psi_{1} & 1 - y - s\psi_{2} \\ 1- x - s\psi_{1} & y + s\psi_{2}
 \end{pmatrix} \in\fspace{S}_2.
 \]

The expression above is very readable, though does not allow us to compute right away the metric of $T_{A} \fspace{N}_2$. However, given $A \in \fspace{N}_2$ and  $\xi \in T_{A} \op{N}_2$, by Item $2$ of Theorem \ref{theo:metric}, we have that
\[
 \langle \zeta, \zeta \rangle_{A} = \int \zeta^2 \,\dd\mu_{A}
\]
It will thus be convenient to work both in $\fspace{N}_2$ where
the functional interpretation of matrices and vectors is clear,
and in $\fspace{S}_2$ where the Gibbs measures naturally appear.
If we consider a variation $\exp(A_{ij} + s\zeta_{ij})$ and differentiate at zero we obtain that the system
\[
 \left\{ \begin{array}{c} e^{ A_{11}}\zeta_{11} + e^{ A_{21}}\zeta_{21} = 0 \\ e^{ A_{12}}\zeta_{12} + e^{ A_{22}}\zeta_{22} = 0 \end{array} \right.
\]
defines $T_{A}\fspace{N}_2\subset \fspace{X}_2$
(in particular we see that this tangent plane depends
on the point $A$).
If $L_{A}=S(x,y)$ and $\psi$ corresponds to $\zeta$ in $T_{L_{A}}\fspace{S}_2$,
i.e. $\psi=DL_{A}(\zeta)$, it comes
\[
 \begin{pmatrix}
  \zeta_{11} & \zeta_{12} \\ \zeta_{21} & \zeta_{22}
 \end{pmatrix} =
 \begin{pmatrix}
  \frac{\psi_{1}}{x} &  \frac{-\psi_{2}}{1 - y} \\  \frac{-\psi_{1}}{1 - x} &  \frac{\psi_{2}}{y}
 \end{pmatrix}
\]
Now the matrix $S(x,y)$ has a right eigenvector
\[
\pi = (\pi(1\ast),\pi(2\ast)) = \bigg( \frac{1-y}{2-x-y}, \frac{1-x}{2-x-y} \bigg),
\]
which is the invariant measure on $\{1,2\}$ of the Markov chain defined
by $A$, and the measures of cylinders with respect to $\mu_A$
are
\[
 \begin{array}{c}
 \mu(11\ast) = \mathbb{P}[1  \to 1]\pi (1) \\
  \mu(12\ast) = \mathbb{P}[2  \to 1]\pi (2) \\
   \mu(21\ast) = \mathbb{P}[1  \to 2]\pi (1) \\
    \mu(22\ast) = \mathbb{P}[2  \to 2]\pi (2) \\
 \end{array}
\]
It is now easy to compute the metric:
\begin{equation*} \begin{split}
 \int \zeta^2 d\mu_{A}  & = \sum_{i,j} \zeta_{i,j}^2 \mu(ij\ast) = \frac{\psi_{1}^2}{x^2} \frac{x(1-y)}{2-x-y} + \frac{\psi_{2}^2}{(1 - y)^2} \frac{(1 - y)(1-x)}{2-x-y} \\
 & + \frac{\psi_{1}^2}{(1 - x)^2}\frac{(1- x)(1-y)}{2-x-y}  +   \frac{\psi_{2}^2}{y^2} \frac{y(1-x)}{2-x-y} \\
 & = \frac{1}{2-x-y}\left( \frac{1- y}{x(1- x)} \psi_{1}^2 + \frac{1- x}{y (1- y)} \psi_{2}^2 \right)
\end{split} \end{equation*}

\begin{prop}\label{metricprop}
The restriction $g$ of the variance metric $\langle\cdot,\cdot\rangle_{A}$
to $\fspace{N}_2$ is given in the chart $S$ by
\begin{equation}\label{MetricBernoulliSpace}
  g_{A} = \begin{pmatrix}
       \frac{(1-y)}{x(1-x)(2-x-y)} & 0 \\
        0 & \frac{(1-x)}{y(1-y)(2-x-y)} \\
        \end{pmatrix}
\end{equation}
(which is positive-definite for all $x,y\in (0,1)\times (0,1)$).
\end{prop}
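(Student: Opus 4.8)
The plan is to invoke Item~2 of Theorem~\ref{theo:metric}, which already reduces the statement to an $L^2(\mu_A)$ computation, and then to assemble the three explicit ingredients collected in the lines preceding the statement: the dictionary $\zeta\leftrightarrow\psi=DL_A(\zeta)$ between tangent vectors to $\fspace{N}_2$ at $A$ and tangent vectors to $\fspace{S}_2$ at $L_A=S(x,y)$ written in the basis $(\partial S/\partial x,\partial S/\partial y)$; the formula giving the matrix entries $\zeta_{ij}$ in terms of $(\psi_1,\psi_2)$; and the values $\mu_A(ij\ast)=\mathbb{P}[j\to i]\,\pi(j)$ of the Gibbs measure on the four depth-two cylinders, where $\pi$ is the stationary (right) eigenvector of $S(x,y)$.

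Concretely, I would first record that for $A\in\fspace{N}_2$ and $\zeta\in T_A\fspace{N}_2$, since $\zeta$ depends only on the first two coordinates,
\[ \langle\zeta,\zeta\rangle_A=\int\zeta^2\,\dd\mu_A=\sum_{i,j\in\{1,2\}}\zeta_{ij}^2\,\mu_A(ij\ast), \]
and then substitute the displayed expressions for $\zeta_{ij}$ and for $\mu_A(ij\ast)$, expand, and collect the coefficients of $\psi_1^2$ and $\psi_2^2$. No $\psi_1\psi_2$ cross term can appear, because $\zeta_{11}$ and $\zeta_{21}$ involve only $\psi_1$ while $\zeta_{12}$ and $\zeta_{22}$ involve only $\psi_2$; hence $\langle\cdot,\cdot\rangle_A$ is diagonal in the chart $S$, and a one-line simplification using $1/x+1/(1-x)=1/(x(1-x))$ and $1/(1-y)+1/y=1/(y(1-y))$ produces exactly the two diagonal entries of \eqref{MetricBernoulliSpace}. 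In other words, the proof merely has to organize, as the Gram matrix of $\langle\cdot,\cdot\rangle_A$ in the coordinates $(x,y)$, the computation already carried out just above the statement.

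Positive-definiteness of $g_A$ is then immediate: for $(x,y)\in(0,1)\times(0,1)$ each of $x$, $1-x$, $y$, $1-y$ and $2-x-y=(1-x)+(1-y)$ is strictly positive, so both diagonal entries are strictly positive. (This can also be seen \emph{a priori}: by Item~1 of Theorem~\ref{theo:metric} the form $\langle\cdot,\cdot\rangle_A$ is weakly positive-definite on $T_A\fspace{N}$, hence on its finite-dimensional subspace $T_A\fspace{N}_2$, where weak positive-definiteness coincides with ordinary positive-definiteness.) There is no genuine obstacle here; the only point requiring care is to keep the two parametrizations straight --- the nonlinear slice $\fspace{N}_2\subset\fspace{X}_2$ cut out by the normalization equations versus the affine slice $\fspace{S}_2$ of column-stochastic matrices --- and to transport both the tangent vectors and the cylinder weights consistently through the coordinate-wise exponentiation map $L$.
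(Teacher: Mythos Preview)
Your proposal is correct and follows essentially the same approach as the paper: the proof of the proposition is precisely the $L^2(\mu_A)$ computation displayed just above the statement, organized via Item~2 of Theorem~\ref{theo:metric}, the dictionary $\zeta\leftrightarrow\psi$, and the cylinder weights $\mu_A(ij\ast)$. Your additional remarks on why no cross term appears and on positive-definiteness are accurate and make explicit what the paper leaves implicit.
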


This means that for $L_{A}=S(x,y)$ and $\psi=DL_{A}(\zeta)$ we have
$|\zeta|_{A}^2 = \begin{pmatrix}\psi_{1} & \psi_{2} \end{pmatrix}
  g_{A} \begin{pmatrix} \psi_{1} \\ \psi_{2} \end{pmatrix}$.

\begin{rema}
Observe that, not incidentally, by recalling the proof of \ref{prop:isotropy} we could have computed $ \langle \zeta, \zeta \rangle_{A} $ in a more roundabout way by using the
equation  contained there \[ \langle \zeta, \zeta\rangle_A = D^2(\log\Lambda)_A(\zeta,\zeta) = D(G_\zeta)_A(\zeta)  .\]
As a side effect we easily obtain that
\begin{equation}\label{lambda''0}
D^2(\log\Lambda)_A(\zeta,\zeta)  = \frac{x(1-y)}{(1-x)(2-x-y)} \zeta_{11}^2 + \frac{(1-x)y}{(1-y)(2-x-y)} \zeta_{22}^2
\end{equation}
We see for example that when $x$ or $y$ goes to $0$, the pressure
becomes very flat (as opposed to very convex, i.e. its Hessian goes to
zero).
\end{rema}
From the metric tensor, we compute the curvature at each point. For simplicity, if we let
$ g_A = \left( \begin{array}{cc}
                              E & 0 \\
                              0 & G \\
                            \end{array} \right)$
then we use the explicit formula for the curvature
$$ K(A) =  -  \frac{1}{ 2 \sqrt{EG}} \left\{ \left(\frac{E_{y }}{ \sqrt{EG} }\right)_{y} + \left(\frac{G_{x }}{ \sqrt{EG} }\right)_{x}  \right\}. $$
where subscripts indicate partial derivatives with respect to the indicated variables. The expression simplifies greatly (see Section \ref{app:metric}):
\begin{coro}
The Gaussian curvature of $g$ at $A$ is given when $L_A=S(x,y)$ by
\begin{equation*}\label{scalarcurvature}
K(A) = \frac{1}{(2 - x - y)}
\end{equation*}
\end{coro}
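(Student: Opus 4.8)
The plan is to substitute the explicit metric coefficients from \eqref{MetricBernoulliSpace} into the orthogonal-metric curvature formula recalled just above (the standard specialization of Gauss's formula to a metric of the form $E\,\dd x^2+G\,\dd y^2$, which applies here since $g_A$ is diagonal in the chart $S$) and then carry out the simplification. The observation that makes this painless is that the product $EG$ collapses: with $E=\frac{1-y}{x(1-x)(2-x-y)}$ and $G=\frac{1-x}{y(1-y)(2-x-y)}$, the factors $1-x$ and $1-y$ cancel pairwise, so
\[ EG=\frac{1}{xy(2-x-y)^2},\qquad\text{hence}\qquad \sqrt{EG}=\frac{1}{\sqrt{xy}\,(2-x-y)}. \]

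Next I would compute the two inner derivatives. Writing $E=\frac{1}{x(1-x)}\cdot\frac{1-y}{2-x-y}$ and differentiating in $y$, one finds $\partial_y\!\left(\frac{1-y}{2-x-y}\right)=\frac{x-1}{(2-x-y)^2}$, so again a factor $1-x$ cancels and
\[ E_y=\frac{-1}{x(2-x-y)^2},\qquad\text{so}\qquad \frac{E_y}{\sqrt{EG}}=-\sqrt{\tfrac{y}{x}}\,\frac{1}{2-x-y}. \]
By the symmetry $x\leftrightarrow y$, $E\leftrightarrow G$ of the metric, the same computation gives $\frac{G_x}{\sqrt{EG}}=-\sqrt{\tfrac{x}{y}}\,\frac{1}{2-x-y}$, so it suffices to differentiate one of these once more and symmetrize. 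Differentiating $-\sqrt{y}\big/\big(\sqrt{x}(2-x-y)\big)$ in $y$ yields $\frac{-(2-x+y)}{2\sqrt{xy}\,(2-x-y)^2}$; adding its $x\leftrightarrow y$ image, the numerators $2-x+y$ and $2+x-y$ sum to $4$, whence
\[ \left(\frac{E_y}{\sqrt{EG}}\right)_{\!y}+\left(\frac{G_x}{\sqrt{EG}}\right)_{\!x}=\frac{-2}{\sqrt{xy}\,(2-x-y)^2}. \]

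Finally, multiplying by $-\frac{1}{2\sqrt{EG}}=-\frac{\sqrt{xy}\,(2-x-y)}{2}$ gives $K(A)=\frac{1}{2-x-y}$, which is manifestly positive for $(x,y)\in(0,1)^2$. There is no genuine obstacle here beyond bookkeeping: the only points requiring care are tracking the pairwise cancellations of the $1-x$, $1-y$ factors (this is exactly what makes $K$ this simple) and using the reflection symmetry to avoid redoing the second differentiation.
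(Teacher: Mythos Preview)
Your proof is correct and follows essentially the same approach as the paper: both substitute the explicit $E,G$ into the orthogonal-metric curvature formula, compute $\sqrt{EG}=\frac{1}{\sqrt{xy}(2-x-y)}$, $E_y=\frac{-1}{x(2-x-y)^2}$, and the second derivatives, then add and simplify to obtain $K=\frac{1}{2-x-y}$. Your explicit use of the $x\leftrightarrow y$ symmetry and the remark about the $(1-x),(1-y)$ cancellations are nice organizational touches, but the argument is otherwise identical to the paper's ``Intermediate steps'' computation.
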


\begin{rema}
In the case at hand, the curvature its always strictly positive.
In fact, it is even bounded away from $0$, so that $\fspace{N}_2$
endowed with $g$ is not complete (indeed, if $g$ where complete
then the Bonnet-Myers theorem would imply that $\fspace{N}_2$ is compact).
\end{rema}

\subsection{Rescaling the metric} \label{re}

We considered in the previous reasoning the Riemmanian norm $\langle \zeta, \zeta \rangle_{A}$ of a tangent vector $\zeta$ at the potential $A$ given by the asymptotic variance, as in theorem \ref{theo:metric}. We wonder how rescaling the metric by the entropy would effect such curvature, based on previous work by McMullen \cite{Mac}.  Given the eigenvector $\pi$ of the previous section (which corresponds to the eigenmeasure) the entropy
is given as a function of $x, y$ by
\[ \begin{split}
 h(x, y)  = & -  \frac{1-y}{2-x-y}\left( x\log(x) + (1 - x)\log(1 - x)  \right)   \\
 & -  \frac{1-x}{2-x-y} \left( (1 - y) \log(1 - y) +  y \log(y) \right)
\end{split} \]

This function is always positive on $(0,1)\times(0,1)$ and is $0$ in the limit to the vertex $(0,0)$ and the edges $\{1\}\times [0,1]$
and $[0,1]\times\{1\}$ (Figure \ref{fig:entropy}). Note that there is
a strong asymmetry between the cases $x=0$ and $x=1$ (similarly for $y$),
as $x=1$ means the Markov chain gets stuck at the state $1$, while $x=0$
means the random walk is always repelled away from state $1$, but then can either stay at $2$ or come back to $1$, leaving enough uncertainty to yield
positive entropy.

\begin{figure}[htp]
\begin{center}
\includegraphics[width=.95\textwidth]{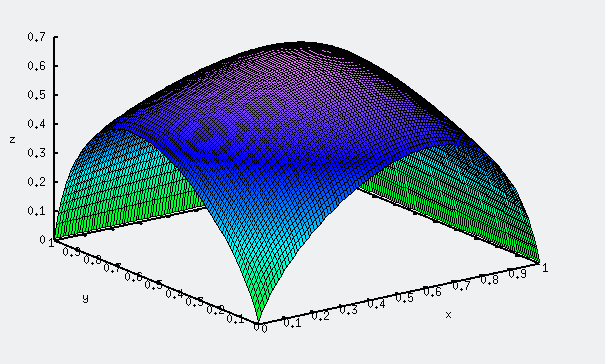}
\caption{The entropy in $(x,y)$ coordinates.}\label{fig:entropy}
\end{center}
\end{figure}
We rescale the metric associated to the matrix $g_A$ of the previous section to a new  $\tilde g_A$ in the interior of the square by setting
$\tilde g_A = \begin{pmatrix}\frac{E}{h} & 0 \\  0 &
\frac{G}{h}\end{pmatrix}$
where $h$ is the entropy functional.
We denote $K$ the curvature  associated to the metric $g$ and
$\widetilde{K}$ the one associated to $\tilde g$.

After a little bit of juggling with the equations, for the strictly positive function $h(x, y)$  one gets
\[ \begin{split}
 \frac{\widetilde{K}}{h} & = K + \frac{1}{ 2 \sqrt{EG}} \left(  \left(\frac{\sqrt{E}}{\sqrt{G}} \frac{h_y}{h}  \right)_{y} +
 \left(\frac{\sqrt{G}}{\sqrt{E}} \frac{h_{x} }{h}  \right)_{x} \right) \\
\end{split} \]

The explicit expression of $\widetilde{K}$ is particularly long and tedious to handle. We use the software Maxima both to do the necessary symbolic manipulation   and the plot of the graph (Figure \ref{fig:curvature-MM}).
In the case of some subshifts related to
Fuchsian groups, McMullen showed that this
precise scaling of the metric identifies with the Weyl-Peterson metric on Teichm\"uller space, which is known to be of negative Ricci curvature.
One could thus expect that $\tilde{g}$ has negative curvature, but
this turns out not to be the case: $\widetilde{K}$
takes both positive and negative values.

\begin{figure}[htp]
\begin{center}
\includegraphics[width=.95\textwidth]{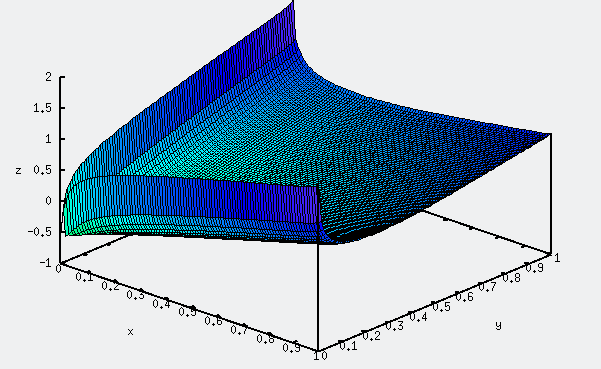}
\caption{Curvature of the variance metric with McMullen's normalization.}
\label{fig:curvature-MM}
\end{center}\end{figure}

\subsection{Intermediate steps} \label{app:metric}

From Section 4 of \cite{Carmo}, to explicitly compute the curvature  we have the followings step.
Observe that
\[
E_{y }  = - \frac{ 1 }{x (2 - x - y)^2 } \quad \mbox{ and }
\quad G_{x } =   - \frac{ 1}{ y(2 - x - y)^2}.
\] Moreover,
$$\sqrt{E\,G} = \frac{1}{ \sqrt{x \, y}(2 - x - y)}.$$ Therefore,
\[
\begin{split}
\frac{\fr}{\fr y} \bigg(\frac{E_{y }}{ \sqrt{EG} }\bigg) & = - \frac{\fr}{\fr y}   \bigg(\frac{ \sqrt{y} }{ \sqrt{x }}\frac{1}{2 - x - y   } \bigg)  = -\frac{1}{2\sqrt{xy}} \frac{(2 - x + y)}{(2 - x - y)^2}
\end{split}
\] and similarly
\[
\begin{split}
\frac{\fr}{\fr x} \bigg(\frac{G_{x }}{ \sqrt{EG} }\bigg) & = -\frac{1}{2\sqrt{xy}} \frac{(2 + x - y)}{(2  - x - y)^2}
\end{split}
\] Finally,
\[
\begin{split}
 \bigg( \frac{E_{y }}{ \sqrt{EG} }\bigg)_{y} + \bigg( \frac{G_{x }}{ \sqrt{EG} }\bigg)_{x} & =
 -\frac{1}{2\sqrt{xy}} \frac{(2 - x + y)}{(2 - x - y)^2} -\frac{1}{2\sqrt{xy}} \frac{(2 + x - y)}{(2  - x - y)^2} \\
& = -\frac{2}{\sqrt{xy}} \frac{1}{(2 - x - y)^2}
\end{split}
\]

\bibliographystyle{smfalpha}
\bibliography{normalization}

\providecommand{\bysame}{\leavevmode ---\ }
\providecommand{\og}{``}
\providecommand{\fg}{''}
\providecommand{\smfandname}{et}
\providecommand{\smfedsname}{\'eds.}
\providecommand{\smfedname}{\'ed.}
\providecommand{\smfmastersthesisname}{M\'emoire}
\providecommand{\smfphdthesisname}{Th\`ese}
\begin{thebibliography}{LMMS15}

\bibitem[AGS05]{AGS}
{\scshape L.~Ambrosio, N.~Gigli {\normalfont \smfandname} G.~Savar{\'e}} --
  \emph{Gradient flows in metric spaces and in the space of probability
  measures}, Lectures in Mathematics ETH Z\"urich, Birkh\"auser Verlag, Basel,
  2005.

\bibitem[Bal00]{Bal}
{\scshape V.~Baladi} -- \emph{Positive transfer operators and decay of
  correlations}, Advanced Series in Nonlinear Dynamics, vol.~16, World
  Scientific Publishing Co., Inc., River Edge, NJ, 2000.

\bibitem[BB00]{BB}
{\scshape J.-D. Benamou {\normalfont \smfandname} Y.~Brenier} -- {\og A
  computational fluid mechanics solution to the {M}onge-{K}antorovich mass
  transfer problem\fg}, \emph{Numer. Math.} \textbf{84} (2000), no.~3,
  p.~375--393.

\bibitem[BCS15]{BCS}
{\scshape M.~Bridgeman, R.~Canary {\normalfont \smfandname} A.~Sambarino} --
  {\og An introduction to pressure metrics on higher {T}eichm\"{u}ller
  spaces\fg}, \emph{ArXiv preprint} (2015), arXiv:1507.03894[math.DG].

\bibitem[BCV12]{BCV}
{\scshape T.~Bomfim, A.~Castro {\normalfont \smfandname} P.~Varandas} -- {\og
  Differentiability of thermodynamical quantities in non-uniformly expanding
  dynamics\fg}, \emph{ArXiv preprint} (2012), arxiv:1205.5361v3[math.DS].

\bibitem[BS01]{Bar2}
{\scshape L.~Barreira {\normalfont \smfandname} B.~Saussol} -- {\og Variational
  principles and mixed multifractal spectra\fg}, \emph{Trans. Amer. Math. Soc.}
  \textbf{353} (2001), no.~10, p.~3919--3944 (electronic).

\bibitem[BSS02]{Bar1}
{\scshape L.~Barreira, B.~Saussol {\normalfont \smfandname} J.~Schmeling} --
  {\og Distribution of frequencies of digits via multifractal analysis\fg},
  \emph{J. Number Theory} \textbf{97} (2002), no.~2, p.~410--438.

\bibitem[Cha85]{Chae}
{\scshape S.~B. Chae} -- \emph{Holomorphy and calculus in normed spaces},
  Monographs and Textbooks in Pure and Applied Mathematics, vol.~92, Marcel
  Dekker, Inc., New York, 1985, With an appendix by Angus E. Taylor.

\bibitem[Cli14]{Climenhaga}
{\scshape V.~Climenhaga} -- {\og The thermodynamic approach to multifractal
  analysis\fg}, \emph{Ergodic Theory and Dynamical Systems} \textbf{34} (2014),
  p.~1409--1450.

\bibitem[dC76]{Carmo}
{\scshape M.~P. do~Carmo} -- \emph{Differential geometry of curves and
  surfaces}, Prentice-Hall, Inc., Englewood Cliffs, N.J., 1976, Translated from
  the Portuguese.

\bibitem[dSdSS14]{SSS}
{\scshape E.~A. da~Silva, R.~R. da~Silva {\normalfont \smfandname} R.~R. Souza}
  -- {\og The analyticity of a generalized {R}uelle's operator\fg}, \emph{Bull.
  Braz. Math. Soc. (N.S.)} \textbf{45} (2014), no.~1, p.~53--72.

\bibitem[Gan59]{Gant}
{\scshape F.~R. Gantmacher} -- \emph{The theory of matrices. {V}ols. 1, 2},
  Translated by K. A. Hirsch, Chelsea Publishing Co., New York, 1959.

\bibitem[Gig11]{Gigli:structure}
{\scshape N.~Gigli} -- {\og On the inverse implication of {B}renier-{M}c{C}ann
  theorems and the structure of {$(\mathscr{P}_2(M),W_2)$}\fg}, \emph{Methods
  Appl. Anal.} \textbf{18} (2011), no.~2, p.~127--158.

\bibitem[Gor72]{Go}
{\scshape W.~B. Gordon} -- {\og On the diffeomorphisms of {E}uclidean
  space\fg}, \emph{Amer. Math. Monthly} \textbf{79} (1972), p.~755--759.

\bibitem[Jen01]{Jenkinson}
{\scshape O.~Jenkinson} -- {\og Rotation, entropy, and equilibrium states\fg},
  \emph{Trans. Amer. Math. Soc.} \textbf{353} (2001), no.~9, p.~3713--3739.

\bibitem[Klo13]{Kl}
{\scshape B.~Kloeckner} -- {\og Optimal transport and dynamics of expanding
  circle maps acting on measures\fg}, \emph{Ergodic Theory Dynam. Systems}
  \textbf{33} (2013), no.~2, p.~529--548.

\bibitem[Klo15a]{Kl2}
\bysame , {\og Counter-examples to an infinitesimal version of the furstenberg
  conjecture\fg}, \emph{Ergodic Theory Dynam. Systems} (2015), To appear.

\bibitem[Klo15b]{Kl3}
{\scshape B.~R. Kloeckner} -- {\og A geometric study of {W}asserstein spaces:
  ultrametrics\fg}, \emph{Mathematika} \textbf{61} (2015), no.~1, p.~162--178.

\bibitem[KLS14]{KLS}
{\scshape B.~Kloeckner, A.~Lopes {\normalfont \smfandname} M.~Stadlbauer} --
  {\og Contraction in the {W}asserstein metric for some {M}arkov chains, and
  applications to the dynamics of expanding maps\fg}, \emph{ArXiv preprint}
  (2014), arxiv:1412.0848v2[math.DS].

\bibitem[KW13]{KW2}
{\scshape T.~Kucherenko {\normalfont \smfandname} C.~Wolf} -- {\og Localized
  pressure and equilibrium states\fg}, \emph{ArXiv preprint} (2013),
  arXiv:1310.4030 [math.DS].

\bibitem[KW14]{KW1}
\bysame , {\og Geometry and entropy of generalized rotation sets\fg},
  \emph{Israel J. Math.} \textbf{199} (2014), no.~2, p.~791--829.

\bibitem[LMMS]{LMMS2}
{\scshape A.~O. Lopes, J.~K. Mengue, J.~Mohr {\normalfont \smfandname} R.~R.
  Souza} -- {\og Entropy, pressure and duality for gibbs plans in ergodic
  transport\fg}, \emph{Bull. Braz. Math. Soc. (N.S.)}.

\bibitem[LMMS15]{Lopes-etal}
\bysame , {\og Entropy and variational principle for one-dimensional lattice
  systems with a general a priori probability: positive and zero
  temperature\fg}, \emph{Ergodic Theory and Dynamical Systems} \textbf{35}
  (2015), p.~1925--1961.

\bibitem[Ma{\~n}90]{Man}
{\scshape R.~Ma{\~n}{\'e}} -- {\og The {H}ausdorff dimension of horseshoes of
  diffeomorphisms of surfaces\fg}, \emph{Bol. Soc. Brasil. Mat. (N.S.)}
  \textbf{20} (1990), no.~2, p.~1--24.

\bibitem[McM08]{Mac}
{\scshape C.~T. McMullen} -- {\og Thermodynamics, dimension and the
  {W}eil-{P}etersson metric\fg}, \emph{Invent. Math.} \textbf{173} (2008),
  no.~2, p.~365--425.

\bibitem[Ott01]{Otto}
{\scshape F.~Otto} -- {\og The geometry of dissipative evolution equations: the
  porous medium equation\fg}, \emph{Comm. Partial Differential Equations}
  \textbf{26} (2001), no.~1-2, p.~101--174.

\bibitem[PP90]{PP}
{\scshape W.~Parry {\normalfont \smfandname} M.~Pollicott} -- {\og Zeta
  functions and the periodic orbit structure of hyperbolic dynamics\fg},
  \emph{Ast\'erisque} (1990), no.~187-188, p.~268.

\bibitem[PS14]{PS}
{\scshape M.~Pollicott {\normalfont \smfandname} R.~Sharp} -- {\og A
  {W}eil-{P}etersson type metric on spaces of metric graphs\fg}, \emph{Geom.
  Dedicata} \textbf{172} (2014), p.~229--244.

\bibitem[Vil03]{Vi1}
{\scshape C.~Villani} -- \emph{Topics in optimal transportation}, Graduate
  Studies in Mathematics, vol.~58, American Mathematical Society, Providence,
  RI, 2003.

\bibitem[Vil09]{Villani2}
\bysame , \emph{Optimal transport}, Grundlehren der Mathematischen
  Wissenschaften [Fundamental Principles of Mathematical Sciences], vol. 338,
  Springer-Verlag, Berlin, 2009, Old and new.

\bibitem[Wal82]{Wal}
{\scshape P.~Walters} -- \emph{Dynamical systems and ergodic theory}, Graduate
  Texts in Mathematics, vol.~79, Springer Verlag, 1982.

\bibitem[Whi65]{Whi}
{\scshape E.~F. Whittlesey} -- {\og Analytic functions in {B}anach spaces\fg},
  \emph{Proc. Amer. Math. Soc.} \textbf{16} (1965), p.~1077--1083.

\end{thebibliography}

\end{document}